\numberwithin{equation}{section}
\title{Quantum symmetry groups of noncommutative tori} 
\author{Micha{\l} Banacki}
\address{Institute of Theoretical Physics and Astrophysics, Faculty of Mathematics, Physics, and Informatics, University of Gda{\'n}sk, 80-308 Gda{\'n}sk, Poland}
\email{mibanfiz@gmail.com}
\author{Marcin Marciniak} 
\address{Institute of Theoretical Physics and Astrophysics, Faculty of Mathematics, Physics, and Informatics, University of Gda{\'n}sk, 80-308 Gda{\'n}sk, Poland}
\email{matmm@ug.edu.pl}
\subjclass[2010]{Primary 16T05, 46L55, 58B32; Secondary 81R50, 46L85, 46L89}
\keywords{compact quantum groups, noncommutative torus, Hopf algebra, Woronowicz C*-algebra, compact quantum group action}
\theoremstyle{plain}
\newtheorem{thm}{Theorem}[section] 
\newtheorem{lm}[thm]{Lemma}
\newtheorem{pro}[thm]{Proposition}
\newtheorem{cor}[thm]{Corollary}
\theoremstyle{definition}
\newtheorem{defn}[thm]{Definition} 
\newtheorem{exmp}[thm]{Example} 
\theoremstyle{remark}
\newtheorem{m}[thm]{Remark}
\newcommand{\id}{\mathrm{id}}
\renewcommand{\t}{\tau}
\newcommand{\ti}{\tau^{-1}}
\newcommand{\s}{\sigma}
\newcommand{\si}{\sigma^{-1}}
\renewcommand{\r}{\rho}
\newcommand{\om}{\omega}
\newcommand{\vk}{{\vec{k}}}
\newcommand{\vr}{{\vec{r}}}
\newcommand{\ep}{\varepsilon}
\newcommand{\beq}{\begin{equation}}
\newcommand{\eeq}{\end{equation}}
\newcommand{\be}{\begin{eqnarray}}
\newcommand{\ee}{\end{eqnarray}}
\newcommand{\beg}{\begin{eqnarray*}}
\newcommand{\eeg}{\end{eqnarray*}}
\newcommand{\bC}{\mathbb{C}}
\newcommand{\cB}{\mathcal{B}}
\newcommand{\cC}{\mathcal{C}}
\newcommand{\cV}{\mathcal{V}}
\newcommand{\cK}{\mathcal{K}}
\newcommand{\bT}{\mathbb{T}}
\newcommand{\bZ}{\mathbb{Z}}
\newcommand{\bG}{\mathbb{G}}
\newcommand{\cA}{\mathcal{A}}
\newcommand{\jed}{\mathds{1}}
\newcommand{\te}{\theta}
\newcommand{\Ob}{\mathrm{Ob}}
\newcommand{\Mor}{\mathrm{Mor}}
\newcommand{\bR}{\mathbb{R}}
\newcommand{\la}{\lambda}
\begin{document}
\maketitle

\begin{abstract}
We discuss necessary conditions for a compact quantum group to act on the algebra of noncommutative $n$-torus $\bT_\theta^n$ in a filtration preserving way in the sense of Banica and Skalski. As a result, we construct a family of compact quantum groups $\bG_\theta=(A_\theta^n,\Delta)$ such that for each $\theta$, $\bG_\theta$ is the final object in the category of all compact quantum groups acting on $\bT_\theta^n$ in a filtration preserving way. We describe in details the structure of the C*-algebra $A_\theta^n$ and provide a concrete example of its representation in bounded operators. Moreover, we compute the Haar measure of $\bG_\te$. For $\theta=0$, the quantum group $\bG_0$ is nothing but the classical group $\bT^n\rtimes S_n$, where $S_n$ is the symmetric group. For general $\theta$, $\bG_\theta$ is still an extension of the classical group $\bT^n$ by the classical group $S_n$. It turns out that for $n=2$, the algebra $A_\theta^2$ coincides with the algebra of the quantum double-torus described by Hajac and Masuda. 
Using a variation of the little subgroup method we show that irreducible representations of $\bG_\theta$ are in one-to-one correspondence with irreducible representations of $\bT^n\rtimes S_n$.
\end{abstract}

\section{Introduction}
Classical notion of a group arises in a natural way in the context of symmetries of various mathematical structures. When the notion of a quantum group had been introduced then it became clear that this scheme could provide us with the new insight into the meaning of symmetry. It appears that the notion of a coaction of a compact quantum group on a C*-algebra (\cite{Pod95}) is an appropriate framework for description of nonclassical symmetries of noncommutative spaces.

Wang's considerations on quantum symmetry groups of finite spaces (\cite{Wan98}) was one of the first attempt to address this issue. It was considered also by other researchers in various contexts (see for instance \cite{FNW96,Mar98b}).  An interesting approach was presented by Goswami in \cite{Gos09}, where the concept of quantum isometry group of a noncommutative manifold was considered. In \cite{Bho,BhoG} several examples were provided. Motivated by this Banica and Skalski (\cite{BS13}) proposed a definition of a quantum symmetry group of C*-algebra equipped with an orthogonal filtration. 

The aim of this paper is to present an explicit construction of a quantum symmetry group $\bG_\te$ of the algebra of noncommutative torus $C(\bT_\theta^n)$ which acts in a filtration preserving way. We show that this quantum group appears in a natural way from considering some necessary conditions for such an action. Thus, it is a final object in the category described by Banica and Skalski. Moreover, we discuss in details the structure of its underlying C*-algebra $A_\theta^n$. In particular we describe a representation of $A_\theta^n$ as an algebra of bounded operators on some appropriate Hilbert space. We compute the Haar measure of $\bG_\te$. It appears that $\bG_\te$ is an analog of the classical semidirect product $\bT^n\rtimes S_n$. It is interesting that being a purely quantum group it is still an extension of the classical torus $\bT^n$ by the classical symmetric group $S_n$. We explore this property and we describe irreducible representations of $\bG_\te$ by means of a variation of little subgroups method. We show that irreducible representations of $\bG_\te$ are in one-to-one correspondence with irreducible representations of the classical group $\bT^n\rtimes S_n$. 

The paper is organized as follows. In Section 2 we provide some preliminary facts on compact quantum groups, their coactions on C*-algebras, and basic properties on noncommutative tori. Section 3 provides the reader with the construction of the C*-algebra $A_\theta^n$. In Chapter 4 we prove the existence of the quantum group $(A_\theta^n,\Delta)$, compute its Haar measure and describe its irreducible representations.

\section{Preliminaries}
\subsection{Compact quantum groups}
Let us begin with a short introduction to the theory of compact quantum groups developed by Woronowicz in \cite{Wor94,Wor87}. 
\begin{defn}[\cite{Wor94}]
\label{cqg}
A compact quantum group $\bG$ is a pair $\bG=(A, \Delta)$, where $A$ is a unital C*-algebra and $\Delta:A\rightarrow A\otimes A$ is a unital $^*$-homomorphism (comultiplication) such that
\begin{enumerate}
\item $\left(\Delta\otimes \id_A\right)\Delta=\left(\id_{A}\otimes \Delta\right)\Delta$, i.e. the following diagram
\begin{center}
\begin{tikzpicture}
  \matrix (m) [matrix of math nodes,row sep=3em,column sep=4em,minimum width=2em]
  {
     A & A\otimes A \\
     A\otimes A & A\otimes A\otimes A \\};
  \path[-stealth]
    (m-1-1) edge node [left] {$\Delta$} (m-2-1)
            edge node [above] {$\Delta$} (m-1-2)
    (m-2-1) edge node [below] {$\id_A\otimes \Delta$}
             (m-2-2)
    (m-1-2) edge node [right] {$\Delta\otimes \id_A$} (m-2-2);
\end{tikzpicture}
\end{center}
commutes,
\item subspaces $\Delta(A)(\mathds{1}\otimes A)$ and $\Delta(A)(A\otimes \mathds{1})$ are dense in $A\otimes A$.
\end{enumerate}
\end{defn} 
In particular, the above definition is satisfied by compact matrix quantum groups. 
\begin{defn}[\cite{Wor87}]
A compact matrix quantum group is a pair $(A,u)$ which consists of a unital C*-algebra $A$ and a matrix $u\in M_n(A)$ satisfying the following conditions
\begin{enumerate}
\item matrix entries $u_{ik}$ generate dense $^*$-subalgebra $\mathcal{A}$ in $A$,
\item there exists a unital $^*$-homomorphism (comultiplication) $\Delta:A\rightarrow A\otimes A$ such that 
$$ 
\Delta(u_{ik})=\sum_{j=1}^n u_{ij}\otimes u_{jk},\qquad i,k=1,\ldots, n,
$$ 
\item there exists a linear antimultiplicative map (coinverse) $\kappa:\mathcal{A}\rightarrow \mathcal{A}$ such that 
$$ 
\kappa(\kappa(a^*)^*)=a, \qquad a\in\cA,
$$ 
and 
\beg 
\sum_{k=1}^n u_{ik}\kappa(u_{kj})&=&\delta_{ij}\mathds{1}_A, \\
\sum_{k=1}^n \kappa(u_{ik})u_{kj}&=&\delta_{ij}\mathds{1}_A
\eeg 
for $i,j=1,\ldots, n$.
\end{enumerate}
\end{defn} 
Analogously to the case of classical compact topological groups, we introduce the notion of a finite-dimensional representation. Remind that according to 'leg notation', if $t=\sum_ix_i\otimes y_i\in X\otimes Y$ for some unital algebras $X,Y$ and $Z$ is also a unital algebra, then elements $t_{12}\in X\otimes Y\otimes Z$ and $t_{13}\in X\otimes Z\otimes Y$ are defined as $t_{12}=\sum_ix_i\otimes y_i\otimes \jed_Z$ and $t_{13}=\sum_ix_i\otimes\jed_Z\otimes y_i$.
\begin{defn}
Let $H$ be a finite dimensional Hilbert space. A 
representation of a compact quantum group $\bG=(A,\Delta)$ acting on $H$ is an element $v\in B(H)\otimes A$ such that
\begin{equation}
\label{eqrep}
\left(\id_A\otimes \Delta\right)v=v_{12}v_{13}.
\end{equation}
\end{defn}

Let us observe that for a rerpresentation $v=\sum_ib_i\otimes a_i\in B(H)\otimes A$, one can consider a uniquely determined linear map $\tilde{v}:H\rightarrow H\otimes A$ defined by
$$ 
\tilde{v}(x)=\sum_ib_ix\otimes a_i,\qquad x\in H.
$$ 
Then the condition \eqref{eqrep} means that the following diagram commutes
\begin{center}
\begin{tikzpicture}
  \matrix (m) [matrix of math nodes,row sep=3em,column sep=4em,minimum width=2em]
  {
     H & H\otimes A \\
     H\otimes A & H\otimes A\otimes A \\};
  \path[-stealth]
    (m-1-1) edge node [left] {$\tilde{v}$} (m-2-1)
            edge node [above] {$\tilde{v}$} (m-1-2)
    (m-2-1) edge node [below] {$\id_H\otimes \Delta$}
             (m-2-2)
    (m-1-2) edge node [right] {$\tilde{v}\otimes \id_A$} (m-2-2);
\end{tikzpicture}
\end{center}
By choosing particular orthonormal basis $e_{1},\ldots,  e_n$ in $H$ we can define $v:H\rightarrow H\otimes A$ by the set of $n^2$ 'matrix coefficients' $v_{ij}\in A$ satisfying
$
v(e_j)=\sum_{i=1}^ne_i\otimes v_{ij}.
$ 
Now, the condition 
\eqref{eqrep} is equivalent to
$$ 
\Delta(v_{ij})=\sum_{k=1}^nv_{ik}\otimes v_{kj}, \qquad i,j=1,2,\ldots,n.
$$ 

We say that representation $v$ is unitary if there exists an orthonormal basis $e_{1},\ldots,  e_n$ in $H$ such that 'matrix coefficients' $v_{ij}\in A$ form a unitary matrix in $M_n(A)$.

Let us consider two representations $v:K\rightarrow K\otimes A$ and $w:L\rightarrow L\otimes A$. A linear map $S:K\rightarrow L$ such that the following diagram 
\begin{center}
\begin{tikzpicture}
  \matrix (m) [matrix of math nodes,row sep=3em,column sep=4em,minimum width=2em]
  {
     K & K\otimes A \\
     L & L\otimes A \\};
  \path[-stealth]
    (m-1-1) edge node [left] {$S$} (m-2-1)
            edge node [above] {$v$} (m-1-2)
    (m-2-1) edge node [below] {$w$}
             (m-2-2)
    (m-1-2) edge node [right] {$S\otimes \id_A$} (m-2-2);
\end{tikzpicture}
\end{center}
commutes is know as an intertwining operator. The set of intertwining operators will be denoted by $\Mor(v,w)$. We say that $v$ is equivalent to $w$ if there exists an invertible element in $\Mor(v,w)$. Representation $v$ is called irreducible if and only if $\Mor(v,v)=\left\{\lambda\mathds{1}_{B(K)}:\lambda\in \mathds{C}\right\}$. Each representation is equivalent to a unitary one.\newline

Finally, let us remind the notion of the Haar measure  
\begin{thm}[\cite{Wor94}]
 Let $\bG=(A,\Delta)$ be a compact quantum group. There exists a unique state $h$ on $A$ 
such that
\begin{equation}
(\id_A\otimes h)\Delta=h(\cdot)\mathds{1}_A=(h\otimes \id_A)\Delta.
\end{equation}
\end{thm}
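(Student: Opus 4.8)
My plan is to handle uniqueness first, which is a short computation, and then to construct the Haar state along the classical lines of Woronowicz and Van Daele. For uniqueness, suppose $h$ and $h'$ are states satisfying the stated identities. For every $a\in A$ we have $(\id_A\otimes h)\Delta(a)=h(a)\jed_A$, so applying $h'$ yields $(h'\otimes h)\Delta(a)=h(a)$; likewise $(h'\otimes\id_A)\Delta(a)=h'(a)\jed_A$, so applying $h$ yields $(h'\otimes h)\Delta(a)=h'(a)$. Hence $h=h'$.

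For existence, I would introduce the convolution of states $\phi,\psi$ on $A$ by $\phi\ast\psi:=(\phi\otimes\psi)\circ\Delta$; this is again a state, being the composition of the unital $^*$-homomorphism $\Delta$ with the product state $\phi\otimes\psi$ on $A\otimes A$, and condition~(1) of Definition~\ref{cqg} makes $\ast$ associative, turning the weak-$\ast$ compact convex state space $S(A)$ into a semigroup in which $(\phi,\psi)\mapsto\phi\ast\psi$ is separately affine and weak-$\ast$ continuous. The first observation is that a state $h$ satisfies both Haar identities if and only if $\psi\ast h=h=h\ast\psi$ for \emph{every} $\psi\in S(A)$: since states separate the points of the C*-algebra $A$, the identity $(\id_A\otimes h)\Delta=h(\cdot)\jed_A$ is equivalent to $\psi\bigl((\id_A\otimes h)\Delta(a)\bigr)=h(a)$ for all states $\psi$ and all $a$, i.e.\ to $\psi\ast h=h$ for all $\psi$, and symmetrically on the other leg. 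To produce such an $h$ I would fix any state $\phi$ and form the Ces\`aro means $\phi_n=\tfrac1n\sum_{k=1}^{n}\phi^{\ast k}$; then $\|\phi\ast\phi_n-\phi_n\|=\tfrac1n\|\phi^{\ast(n+1)}-\phi\|\le\tfrac2n$ and likewise $\|\phi_n\ast\phi-\phi_n\|\le\tfrac2n$, so any weak-$\ast$ limit point $h$ of $(\phi_n)$ satisfies $\phi\ast h=h\ast\phi=h$.

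The remaining task — upgrading this from invariance under the chosen $\phi$ to invariance under \emph{all} states — is the heart of the argument and the step I expect to be the main obstacle. It is precisely here that condition~(2) of Definition~\ref{cqg}, the density of $\Delta(A)(\jed\otimes A)$ and $\Delta(A)(A\otimes\jed)$ in $A\otimes A$, must enter in an essential rather than incidental way: this cancellation property is what excludes the degenerate behaviour that occurs without it (for a ``point-evaluation''-type $\phi$ the Ces\`aro construction alone returns states that are not the Haar state at all), and, combined with a manipulation of the convolution semigroup, it forces the state $h$ above to satisfy $\psi\ast h=h=h\ast\psi$ for every $\psi\in S(A)$. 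By the equivalence noted above, this $h$ is then the Haar state.
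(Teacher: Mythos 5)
Your uniqueness argument is complete and correct, and the Ces\`aro-mean construction of a state $h$ with $\phi\ast h=h=h\ast\phi$ for one fixed state $\phi$ is also fine (note, for what it is worth, that the paper itself offers no proof of this theorem --- it is quoted from Woronowicz --- so the benchmark is simply whether your sketch closes). It does not: the step you yourself identify as ``the heart of the argument'' is exactly the part you do not prove. Saying that the density of $\Delta(A)(\jed\otimes A)$ and $\Delta(A)(A\otimes\jed)$ ``must enter in an essential way'' and ``forces'' $\psi\ast h=h=h\ast\psi$ for all states $\psi$ is a statement of intent, not an argument; as your own point-evaluation example shows, a weak-$\ast$ limit of Ces\`aro means of a single $\phi$ need not be the Haar state, so some genuinely new input is required to upgrade invariance under one state to invariance under all of $S(A)$, and nothing in the proposal supplies it.

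The missing ingredient is the key lemma of Van Daele's proof (also implicit in Woronowicz): if $h,\omega$ are states with $h\ast\omega=h$ (resp.\ $\omega\ast h=h$), then $h\ast\rho=\rho(\jed)\,h$ (resp.\ $\rho\ast h=\rho(\jed)\,h$) for every positive functional $\rho\le\omega$. Its proof is where the cancellation axiom is actually used: one shows, via a Cauchy--Schwarz/positivity argument, that the functional $\theta=(h\otimes\rho)\circ\Delta-\rho(\jed)h$ kills elements of the form $\Delta(a)(\jed\otimes b)$, and density of such elements in $A\otimes A$ then gives $\theta=0$. With the lemma in hand one still needs the combination step: given an arbitrary state $\psi$, run your Ces\`aro construction for $\omega'=\tfrac12(\phi+\psi)$ to obtain $h_{\omega'}$, note that the lemma makes $h_{\omega'}$ invariant under both $\phi$ and $\psi$, and then show $h_{\omega'}=h$ by convolving the two candidate states against each other and using separate weak-$\ast$ continuity (here your equivalence ``two-sided invariance under all states $\Leftrightarrow$ the Haar identities'' is the right framework). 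Without the lemma and this gluing argument the existence half of the theorem is not established, so as it stands the proposal proves uniqueness but only gestures at existence.
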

\noindent We say that $h$ is the Haar measure on $\bG=(A,\Delta)$.

\subsection{Noncommutative $n$-torus}

Idea of noncommutative $n$-torus was first considered by Rieffel \cite{Rie90} as a generalization of the irrational rotational algebra \cite{Rie81}. Since that moment noncommutative torus has become broadly discussed subject as one of the simplest yet still nontrivial example of a noncommutative topological space (which can be also equipped with a noncommutative smooth structure in the sense of Connes spetral triples formalism \cite{Con94,Con13}).

\begin{defn} Noncommutative $n$-torus is a universal C*-algebra $C(\mathds{T}_\theta^n)$ generated by the set of unitary elements $\left\{x_i:i=1,2\ldots,n\right\}$ satisfying 
$$ 
x_ix_j=\omega_{ij}x_jx_i 
$$ 
for all $i,j=1,2\ldots,n$, where $\omega_{ij}=e^{2\pi i\theta_{ij}}$ and $\theta_{ij}$ are entries of a skew-symmetric matrix $\theta\in M_n(\mathbb{R})$.
\end{defn}
In a specific case when $\theta=0$, i.e. all $\omega_{ij}$ are equal to $1$, $C(\mathds{T}_0^n)$ is a universal C*-algebra generated by $n$ commuting unitaries $\left\{x_i:i=1,2\ldots,n\right\}$, therefore it is isomorphic with the algebra $C(\mathds{T}^n)$ of continuous functions on classical $n$-torus. Hence, noncommutative torus can be considered as a deformation of the commutative C*-algebra $C(\mathds{T}^n)$. In a language of noncommutative topology one can treat $C(\mathds{T}_\theta^n)$ as a dual object to the 'abstract' space $\mathds{T}^n_\theta$.

By $\mathrm{Poly}(\mathds{T}_\theta^n)$ we denote a dense $^*$-subalgebra of $C(\mathds{T}_\theta^n)$ which consists of elements of the following form
$$ 
\mathrm{Poly}(\mathds{T}_\theta^n)=\left\{x=\sum_{\vec{r}\in \mathds{Z}^n}a_{\vec{r}}x^{\vec{r}}:\textrm{almost all }a_{\vec{r}}\in \mathbb{C}\textrm{ are equal to } 0 \right\}
$$ 
where
$$ 
x^{\vr}=x_1^{r_1}x_2^{r_2}\ldots x_n^{r_n}
$$ 
for any $\vr=(r_1,r_2\ldots r_n)\in \mathds{Z}^n$. Similarly to the previous remark, $\mathrm{Poly}(\mathds{T}_0^n)$ coincides with the $^*$-algebra of polynomial functions on $\mathds{T}^n$, when monomial function $x_i:\mathds{T}^n\rightarrow \mathbb{C}$ is given by $x_i(t_1,t_2\ldots t_n)=t_i$.

There is a natural action $\gamma$ of $\mathds{T}^n$ on $C(\mathds{T}_\theta^n)$ given on generators as
\begin{equation}
\label{gama}
\gamma_t(x_i)=t_ix_i
\end{equation}where $t=(t_1,t_2\ldots t_n)\in \mathds{T}^n$. Strating from this definition one can consider an averaging over orbits $E:\mathrm{Poly}(\mathds{T}_\theta^n)\rightarrow \mathrm{Poly}(\mathds{T}_\theta^n)$ described by
$$ 
E(x)=\int_{\mathds{T}^n}{\gamma_t(x)d\mu(t)}=a_{\vec{0}}\mathds{1}=\phi(x)\mathds{1}
$$ 
where $\mu$ denotes the Haar measure on $\mathds{T}^n$. From that one can extends $\phi$ to the faithful trace define on the whole $C(\mathds{T}_\theta^n)$ \cite{CXZ13}.

\subsection{Quantum symmetry groups}
\label{subs_qsg}
The following definition due to Podle\'s \cite{Pod95} give us a generalization of a well known concept of strongly continuous action of a compact group on some unital C*-algebra \cite{Ped79}.
\begin{defn}\label{act}
A coaction of a compact quantum group $\bG=(A, \Delta)$ on a unital C*-algebra $B$ is a unital $^*$-homomorphism $\alpha:B\rightarrow B\otimes A$ such that
\begin{enumerate}
\item $\left(\alpha\otimes \id_A\right)\alpha=\left(\id_{B}\otimes \Delta\right)\alpha$, i.e. the following diagram
\begin{center}
\begin{tikzpicture}
  \matrix (m) [matrix of math nodes,row sep=3em,column sep=4em,minimum width=2em]
  {
     B & B\otimes A \\
     B\otimes A & B\otimes A\otimes A\\};
  \path[-stealth]
    (m-1-1) edge node [left] {$\alpha$} (m-2-1)
            edge node [above] {$\alpha$} (m-1-2)
    (m-2-1) edge node [below] {$\id_B\otimes \Delta$}
             (m-2-2)
    (m-1-2) edge node [right] {$\alpha\otimes \id_A$} (m-2-2);
\end{tikzpicture}
\end{center}
\noindent is commutative,
\item subspace $\alpha\left(B\right)\left(\mathds{1}_B\otimes A\right)$ is dense in $B\otimes A$.
\end{enumerate}
\end{defn} 

In the classical setting symmetry group (automorphism group) of a given space $X$ is described by the group of all transformations of $X$ which preserve its inner structure. From the category-theoretic point of view, the notion of symmetry group can be equivalently defined as a final (universal) object in an appropriate category of transformations acting on a given $X$. Such approach was used by Wang in his discussion on quantum automorphism groups of finite spaces \cite{Wan98}. Similar scheme was also introduced by Banica and Skalski in their definition of quantum symmetry groups preserving orthogonal filtrations \cite{BS13}. 

Before we pass to this construction we shall remind the notion of an orthogonal filtration. Suppose that there is a unital C*-algebra $B$ and the set of indexes $I$ with one distinct element $0\in I$.

\begin{defn}[\cite{BS13}]
An orthogonal filtration on $B$ is a pair $\cV=\left\{\varphi,(V_i)_{i\in I}\right\}$ which consists of a faithful state $\varphi: B\rightarrow \mathbb{C}$  and a family $(V_i)_{i\in I}$ of finite-dimensional subspaces of $B$ such that
\begin{enumerate}
\item $V_0=\mathbb{C}\mathds{1}$,
\item $\varphi(x^*y)=0$ for $x\in V_i$, $y\in V_j$, where $i,j\in I$, $i\neq j$,
\item 
$\mathrm{span}\left(\bigcup_{i\in I}V_i\right)$ is a dense 
$^*$-subalgebra $\cB$ of $B$.
\end{enumerate}
\end{defn}
\begin{exmp}
\label{exfol}
Let $B=C(\bT_\te^n)$ and let $\phi$ be the faithful trace on $C(\bT_\te^n)$.
We will consider the following orthogonal filtration on $C(\bT_\te^n)$.
\begin{enumerate}
\item $V_0=\mathbb{C}\mathds{1}$,
\item $V_{p,q}=\mathrm{span}\{x_{i_1}x_{i_2}\ldots x_{i_p}x^*_{j_q}\ldots x^*_{j_1}:\,\mbox{$i_k,j_l=1,\ldots,n$, $i_k\neq j_l$, $k=1,\ldots,p$, $l=1,\ldots,q$}\}$, 
\end{enumerate}
i.e. $V_{p,q}$ is a linear span of irreducible words consisting of $p$ generators $x_i$ and $q$ conjugate generators $x_j^*$.
\end{exmp}

\begin{defn} Let $B$ be a unital C*-algebra equipped with an orthogonal filtration $\cV=\left\{\varphi,(V_i)_{i\in I}\right\}$. We say that a compact quantum group $\bG=(A_\bG,\Delta_\bG)$ acts on $B$ in a filtration preserving way if there is a coaction $\alpha:B\rightarrow B\otimes A_\bG$ such that 
\begin{equation}
\alpha(V_i)\subset V_i\otimes A_\bG
\end{equation}
for all $i\in I$, where $V_i\otimes A_\bG$ denotes here an algebraic tensor product.
\end{defn}

By $\mathcal{C}_{B,\cV}$ (or $\cC_B$ when the state and filtrafion are fixed) we denote the category of all compact quantum groups acting on $B$ in a filtration preserving way. 
Objects $\Ob\left(\mathcal{C}_{B,\cV}\right)$ in this category are pairs $(\bG,\alpha)$ where $\bG$ is a compact quantum group and $\alpha$ is a filtration preserving coaction of $\bG$ on $B$. 
The morphisms $\Mor\left(\mathcal{C}_{B,\cV}\right)$ are compact quantum groups morphisms which are compatible with appropriate coactions, i.e. $\pi\in \Mor((\bG_1,\alpha_1),(\bG_2,\alpha_2))$ if $\pi: C(\bG_2)_u\to C(\bG_1)_u$ is a unital *-homomorphism such that
\beq
\label{univ1}
(\pi\otimes\pi)\circ\Delta_{\bG_2}=\Delta_{\bG_1}\circ\pi,
\eeq
and
\beq
\label{univ2}
(\id_B\otimes\pi)\circ\alpha_2\big|_\cB=\alpha_1\big|_\cB.
\eeq
(Here $C(\bG)_u$ denotes the universal version of $A_\bG$. 
See \cite{BS13,BCT05} for details).

We say that $(\bG_u,\alpha_u)\in \Ob\left(\mathcal{C}_{B,\cV}\right)$ is a quantum symmetry group of $(B,\cV)$ if 
$(\bG_u,\alpha_u)$ is a final object in this category, i.e for each $(\bG,\alpha)\in \Ob\left(\mathcal{C}_{B,\cV}\right)$ there is a unique morphism from $(\bG,\alpha)$ to  $(\bG_u,\alpha_u)$.

\begin{m}
The existence of the final object in $\mathcal{C}_{B,\cV}$ 
was proved in 
\cite{BS13}. The final object is unique up to isomorphism.
\end{m}

Let $\bG=(A,\Delta)$ be a compact quantum group which coacts on a unital C*-algebra $B$ by unital $^*$-homomorphism $\alpha$. We say that the coaction is 
faithful 
if there is no proper Woronowicz Hopf C*-subalgebra $\widetilde{A}\subset A$ such that $\alpha$ is a coaction of $(\widetilde{A},\Delta)$ on $B$.
If $(\bG_u,\alpha_u)$ is the final object in $\mathcal{C}_{B,\cV}$, then $\alpha_u$ is faithful \cite{Wan98}.

\section{Construction of the quantum group algebra}
\subsection{Necessary conditions}
Let 
$\bG=(A,\Delta)$ be a compact quantum group. Assume that $\alpha$ is a coaction of $\bG$ on $C(\bT_\te^n)$ which 
preserves the filtration described in Example \ref{exfol}.  
The aim of this subsection is to describe  
necessary conditions for $(A,\Delta)$ to be the final object in the category $\cC_{C(\bT_\te^n)}$. 
Let us start with the observation that 
the assumption on $\alpha$ implies $\alpha(V_{1,0})\subset V_{1,0}\otimes A$. Since $V_{1,0}=\mathrm{span}\{x_1,\ldots,x_n\}$, there are unique elements $u_{ik}\in A$, where $i,k=1,\ldots,n$, such that  
\begin{equation}
\label{eq5}
\alpha(x_k)=\sum_{i=1}^n x_i\otimes u_{ik},\qquad k=1,\ldots,n.
\end{equation}
Definition \ref{act} and (\ref{eq5}) imply that 
\begin{equation}
\label{del}
\Delta(u_{ik})=\sum_j u_{ij}\otimes u_{jk},\qquad i,k=1,\ldots,n.
\end{equation}
Therefore $u=(u_{ik})\in M_n(A)=B(\bC^n)\otimes A$ is a representation of $\bG$ on $\bC^n$. 
\begin{pro}
\label{profil}
If $\alpha$ preserves the filtration on the algebra $C(\bT_\te^n)$, then elements $u_{ik}$ satisfy the following relations:
\be
&u_{ik}u_{jl}+\omega_{ji}u_{jk}u_{il}= \omega_{kl}u_{il}u_{jk}+\omega_{ji}\omega_{kl}u_{jl}u_{ik},& 
\label{e1.1}\\
&\sum\limits_{i=1}^n u_{ik}^{}u_{il}^*=\delta_{kl}\mathds{1},& 
\label{e1.2a}\\
&\sum\limits_{i=1}^n u_{il}^*u_{ik}^{}=\delta_{kl}\mathds{1},&  
\label{e1.2b} \\
& u_{jk}^{}u_{ik}^*=0,& 
i\neq j, \label{e1.4a} \\
& u_{ik}^*u_{jk}=0 & i\neq j, \label{e1.4b}
\ee
where $i,j,k,l=1,\ldots,n$.
\end{pro}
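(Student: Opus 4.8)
The plan is to extract all five families of relations by feeding suitable generators of the algebra $C(\bT_\te^n)$ through the coaction $\alpha$ and comparing coefficients, using (i) that $\alpha$ is a $^*$-homomorphism, (ii) that $\alpha$ preserves the subspaces $V_{p,q}$, and (iii) that each generator $x_i$ is unitary. Throughout I will use the formula $\alpha(x_k)=\sum_i x_i\otimes u_{ik}$ and its adjoint $\alpha(x_k^*)=\sum_i x_i^*\otimes u_{ik}^*$, together with the defining commutation relations $x_ix_j=\omega_{ij}x_jx_i$, and the fact that since $\phi$ is a faithful trace and the monomials $x^{\vr}$ are $\phi$-orthogonal, the elements $x_ix_j$ for various ordered pairs (and similarly $x_ix_j^*$, $x_i^*x_j$) are linearly independent in the relevant filtration pieces whenever the indices are distinct, so that coefficients in $A$ can be read off unambiguously.

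First I would derive \eqref{e1.1}: apply $\alpha$ to both sides of $x_kx_l=\omega_{kl}x_lx_k$. The left side gives $\sum_{i,j}x_ix_j\otimes u_{ik}u_{jl}$ and the right side $\omega_{kl}\sum_{i,j}x_ix_j\otimes u_{jl}u_{ik}$ (after renaming); now rewrite every product $x_ix_j$ with $i>j$ as $\omega_{ij}x_jx_i$ to put everything into the canonical ordered form, and equate the coefficient of each ordered monomial $x_ix_j$ with $i<j$ (the diagonal terms $x_i^2$ give a relation that is subsumed, or one restricts to $i\neq j$). This yields precisely \eqref{e1.1}. Next, relations \eqref{e1.2a} and \eqref{e1.2b} come from unitarity of $x_k$: apply $\alpha$ to $x_k^*x_k=\jed$ and to $x_kx_k^*=\jed$. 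For instance $\alpha(x_k^*x_k)=\sum_{i,j}x_i^*x_j\otimes u_{ik}^*u_{jk}$; the summands with $i=j$ give $\sum_i x_i^*x_i\otimes u_{ik}^*u_{ik}=\jed\otimes\sum_i u_{ik}^*u_{ik}$, and the summands with $i\neq j$ lie in $V_{1,1}$ (irreducible words $x_i^*x_j$, $i\neq j$) and must therefore sum to $\jed\otimes 0$. Since the $x_i^*x_j$ with $i\neq j$ are linearly independent, each coefficient vanishes, giving \eqref{e1.4b} (with $u_{ik}^*u_{jk}=0$ for $i\neq j$), and the $i=j$ part gives $\sum_i u_{ik}^*u_{ik}=\jed$, which is \eqref{e1.2b} (for the diagonal case $k=l$). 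Symmetrically, $x_kx_k^*=\jed$ gives \eqref{e1.2a} and \eqref{e1.4a} for $k=l$. To get the off-diagonal cases $k\neq l$ of \eqref{e1.2a}, \eqref{e1.2b} one applies $\alpha$ to $x_k^*x_l$ and $x_kx_l^*$ and uses filtration preservation: $\alpha(x_k^*x_l)\in V_{1,1}\otimes A$, so the $i=j$ coefficient $\sum_i u_{ik}^*u_{il}$ of the non-irreducible word (which would land in $V_0$) must vanish — yielding \eqref{e1.2b} for $k\neq l$ — while the $i\neq j$ coefficients reproduce \eqref{e1.4b} again.

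The main obstacle I anticipate is bookkeeping rather than conceptual: one must be careful that the relevant monomials really are linearly independent in the appropriate $V_{p,q}$ (this is where the faithfulness of the trace $\phi$ and the orthogonality of distinct monomials is used — a word like $x_i^*x_j$ with $i\neq j$ is genuinely ``irreducible'' and nonzero, while $x_i^*x_i=\jed$ collapses), and that the $\omega$-factors are tracked correctly when reordering the $x_ix_j$ into canonical form for \eqref{e1.1}. A secondary subtlety is that $u_{ik}^*u_{jk}=0$ for $i\neq j$ must be separated cleanly from the ``diagonal'' identity; doing this requires knowing that $x_i^*x_j$ ($i\neq j$) and $\jed$ are linearly independent, i.e. that the word $x_i^*x_j$ does not reduce, which holds precisely because $i\neq j$ prevents any cancellation. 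Once the independence is in place, each of \eqref{e1.1}--\eqref{e1.4b} drops out by matching coefficients in $A$, and no deeper input is needed.
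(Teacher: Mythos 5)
Your proposal is correct and follows essentially the same route as the paper: apply $\alpha$ to $x_kx_l=\omega_{kl}x_lx_k$ and reorder into the canonical monomials $x_ix_j$, $i\le j$, to get \eqref{e1.1}; use $x_k^*x_k=\jed=x_kx_k^*$ with linear independence of $\{\jed\}\cup\{x_i^{}x_j^*\}_{i\neq j}$ (resp.\ $\{x_i^*x_j^{}\}_{i\neq j}$) to get \eqref{e1.4a}, \eqref{e1.4b} and the diagonal parts of \eqref{e1.2a}, \eqref{e1.2b}; and use $\alpha(V_{1,1})\subset V_{1,1}\otimes A$ applied to $x_k^{}x_l^*$, $x_k^*x_l^{}$ ($k\neq l$) to kill the $\jed$-coefficients and obtain the off-diagonal parts. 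One incidental remark is off --- for $k\neq l$ the coefficients of $x_i^*x_j$ with $i\neq j$ in $\alpha(x_k^*x_l)$ are $u_{ik}^*u_{jl}$, which are \emph{not} forced to vanish and do not ``reproduce \eqref{e1.4b}'' --- but this claim is not used anywhere, so the argument stands as is.
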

\begin{proof}
Since $\alpha$ is a *-homomorphism, we have
\begin{equation}
\label{w1}
 \alpha(x_kx_l)=\omega_{kl}\alpha(x_lx_k)
\end{equation}
\begin{equation}\label{w2}
\alpha(x_kx_k^*)=\mathds{1}\otimes\mathds{1}=\alpha(x_k^*x_k)
\end{equation}
for every $k,l=1,\ldots,n$.
Let us observe that
\beg
\alpha\left(x_kx_l\right)&=&\alpha\left(x_k\right)\alpha\left(x_l\right)=\sum_{i,j}x_ix_j\otimes u_{ik}u_{jl}
\\&=&
 \sum_{i}x_i^2\otimes u_{ik}u_{il}+\sum_{i<j}x_ix_j\otimes u_{ik}u_{jl}+\sum_{i>j}x_ix_j\otimes u_{ik}u_{jl}\\
&=&
 \sum_{i}x_i^2\otimes u_{ik}u_{il}+\sum_{i<j}x_ix_j\otimes u_{ik}u_{jl}+\sum_{i<j}x_jx_i\otimes u_{jk}u_{il}\\
&=& \sum_{i}x_i^2\otimes u_{ik}u_{il}+\sum_{i<j}x_ix_j\otimes u_{ik}u_{jl}+\sum_{i<j}\omega_{ji}x_ix_j\otimes u_{jk}u_{il} \\
&=& \sum_{i}x_i^2\otimes u_{ik}u_{il}+\sum_{i<j}x_ix_j\otimes (u_{ik}u_{jl}+\omega_{ji} u_{jk}u_{il}) 
\eeg 
and, consequently
$$ 
\alpha\left(x_lx_k\right)=
\sum_{i}x_i^2\otimes  
u_{il}u_{ik}+\sum_{i<j}x_ix_j\otimes (
u_{il}u_{jk}+ 
\omega_{ji} u_{jl}u_{ik}) .
$$ 
Since the system $\{x_ix_j:\,i\leq j\}$ is linearly independent, \eqref{w1} leads to \eqref{e1.1}.
Further, we have
\beg
\alpha\left(x_kx_k^*\right)&=&\alpha\left(x_k\right)\alpha\left(x_k^*\right)= \sum_{i,j}x_i^{}x_j^*\otimes u_{ik}u_{jk}^*
=
 \sum_{i}x_i^{}x_i^*\otimes u_{ik}u_{ik}^*+\sum_{i\neq j}x_i^{}x_j^*\otimes u_{ik}u_{jk}^*\\
&=&
 \mathds{1}\otimes \sum_{i}u_{ik}u_{ik}^*+\sum_{i\neq j}x_i^{}x_j^*\otimes u_{ik}u_{jk}^*
\eeg 
and similarly
$$
\alpha\left(x_k^*x_k\right)=
 \mathds{1}\otimes \sum_{i}u_{ik}^*u_{ik}^{}+\sum_{i\neq j}x_i^*x_j^{}\otimes u_{ik}^*u_{jk}.
$$ 
Using \eqref{w2} and linear independence of the set $\{\jed\}\cup\{x_i^{}x_j^*:\,i\neq j\}$ we derive 
\begin{equation}
\label{e2aa}
\sum_i u_{ik}^{}u_{ik}^*=\mathds{1}, \qquad k=1,2\ldots,n
\end{equation}
and \eqref{e1.4a}. 
Analogously, \eqref{w2} and linear independence of the set $\{\jed\}\cup\{x_i^{*}x_j^{}:\,i\neq j\}$ imply 
\beq
\label{e2ba}
\sum_i u_{ik}^{*}u_{ik}^{}=\mathds{1}, \qquad k=1,2\ldots,n
\eeq
and \eqref{e1.4b}.
Finally, we make use of the inclusion 
$\alpha (V_{1,1})\subset V_{1,1}\otimes A$. Since 
$$ 
\alpha(x_kx_l^*)=\sum_{i,j} x_ix_j^*\otimes u_{ik}u^*_{jl}=\mathds{1}\otimes \sum_{i}u_{ik}u^*_{il}+\sum_{i\neq j} x_ix_j^*\otimes u_{ik}u^*_{jl}
$$ 
and
$$ 
\alpha(x_k^*x_l)=\sum_{i,j} x_i^*x_j\otimes u_{ik}^*u_{jl}=\mathds{1}\otimes \sum_{i}u^*_{ik}u_{il}+\sum_{i\neq j} x_i^*x_j\otimes u_{ik}^*u_{jl}
$$ 
for $k\neq l$, we derive 
\begin{equation}
\label{e2ab}
\sum_{i}u_{ik}u^*_{il}=0
\end{equation}
and
\begin{equation}
\label{e2bb}
\sum_{i}u^*_{ik}u_{il}=0.
\end{equation}
Combination of \eqref{e2aa} and \eqref{e2ab} leads to \eqref{e1.2a}, while \eqref{e2ba} and \eqref{e2bb} give \eqref{e1.2b}.
\end{proof}

\subsection{Construction of a representation}
The aim of this subsection is to describe explicitly a conecrete Hilbert space $H$ and operators $U_{ik}\in B(H)$, $i,k=1,\ldots,n$, which satisfy relations listed in Proposition \ref{profil}.
To this end let us define 
$H=\bigoplus_{\s\in S_n}H_\s$, where 
$H_\s=\ell^2(\mathbb{Z})^{\otimes{n \choose 2}}$ for every $\s\in S_n$, where $S_n$ denotes the symmetric group.
Let 
$$
\Lambda=\{(\lambda_1,\lambda_2):\,\lambda_1,\lambda_2=1,2,\ldots,n,\; \lambda_1<\lambda_2\}.
$$ 
Obviously, $\#\Lambda={n \choose 2}$. Tensor factors in $H_\s=\ell^2(\bZ)^{\otimes {n\choose 2}}$ will be labeled by elements of $\Lambda$ and by $\left\{e_{\s,m}^{\lambda}:\, m\in \mathbb{Z}\right\}$ we denote the standard orthonormal basis in the copy of  $\ell^2\left(\mathbb{Z}\right)$ labeled by $\lambda\in\Lambda$. By $V$ we denote the set of all functions $v:\Lambda\rightarrow \mathbb{Z}$. For every $\sigma\in S_n$ and $v\in V$ we 
define vectors $\ep_{\s,v}\in H_{\sigma}= \ell^2(\mathbb{Z})^{\otimes{n \choose 2}}$ by
$$ 
\ep_{\s,v}=\bigotimes\limits_{\lambda\in \Lambda}e^{\lambda}_{\s,v(\lambda)}.
$$ 
The set $\left\{\ep_{\s,v}: v\in V\right\}$ is an orthonormal basis in each space $H_{\sigma}$. 
Define action of each operator $U_{ik}$ on the basis $\{\ep_{\s,v}:\,\s\in S_n,\,v\in V\}$ 
by
$$ 
U_{ik}\ep_{\s,v}= 
\begin{cases} \bigotimes\limits_{\lambda\in \Lambda}U_{ik}^{\lambda}e^{\lambda}_{\s,v(\lambda)} &\mbox{if } \sigma(k)=i \\
0 & \mbox{if } \sigma(k)\neq i\end{cases}, 
$$ 
where
$$ 
U_{ik}^{\lambda}e_{\s,m}^\lambda= \begin{cases} e^{\lambda}_{\s,m} &\mbox{if } k\notin \lambda ,\\
(\overline{\omega_{i,\sigma(\lambda_2)}}\, \omega_{k,\lambda_2})^{m}\,e^{\lambda}_{\s,m} & \mbox{if } k=\lambda_1 ,\\
e^{\lambda}_{\s,m+1}& \mbox{if } k=\lambda_2. \end{cases}
$$ 
Then, the adjoint $U_{ik}^*$ is given by  
$$ 
U_{ik}^*\ep_{\s,v}= 
\begin{cases} \bigotimes\limits_{\lambda\in \Lambda}U_{ik}^{*\lambda}e^{\lambda}_{\s,v(\lambda)} &\mbox{if } \sigma(k)=i , \\
0 & \mbox{if } \sigma(k)\neq i ,\end{cases}
$$ 
where
$$ 
U_{ik}^{*\lambda}e^{\lambda}_{\s,m}= \begin{cases} e^{\lambda}_{\s,m} &\mbox{if } k\notin \lambda ,\\
(\omega_{i,\sigma(\lambda_2)}\,\overline{\omega_{k,\lambda_2}})^{m}\,e^{\lambda}_{\s,m} & \mbox{if } k=\lambda_1,\\
e^{\lambda}_{\s,m-1}& \mbox{if } k=\lambda_2.\end{cases}  
$$ 
\begin{pro}
\label{rep}
Operators $U_{ik}$ satisfy relations listed in Proposition \ref{profil}.
\end{pro}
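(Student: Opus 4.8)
The plan is to verify each of the five families of relations in Proposition \ref{profil} directly on the orthonormal basis $\{\ep_{\s,v}\}$ of $H$. The key structural observation is that each $U_{ik}$ is ``supported'' on the subspaces $H_\s$ with $\s(k)=i$ and annihilates every $H_\s$ with $\s(k)\neq i$; moreover on each $H_\s$ the operator acts as a tensor product $\bigotimes_{\lambda\in\Lambda}U_{ik}^\lambda$ of commuting single-leg operators, where $U_{ik}^\lambda$ is either the identity (if $k\notin\lambda$), a diagonal unitary with eigenvalues of modulus $1$ (if $k=\lambda_1$), or the bilateral shift $e^\lambda_{\s,m}\mapsto e^\lambda_{\s,m+1}$ (if $k=\lambda_2$). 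In particular each nonzero $U_{ik}|_{H_\s}$ is unitary from $H_\s$ onto $H_{\s}$, so $U_{ik}^*U_{ik}$ and $U_{ik}U_{ik}^*$ are both the orthogonal projection $P_{i,k}$ onto $\bigoplus_{\s:\,\s(k)=i}H_\s$.

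First I would dispose of the ``off-diagonal orthogonality'' relations \eqref{e1.4a} and \eqref{e1.4b}. For $i\neq j$ the operators $U_{ik}$ and $U_{jk}$ have ranges and domains living over disjoint sets of permutations (those with $\s(k)=i$ versus $\s(k)=j$), so $U_{jk}U_{ik}^*=U_{jk}P_{i,k}$ maps the range of $U_{ik}$, which lies in $\bigoplus_{\s(k)=i}H_\s$, into $0$ because $U_{jk}$ kills those $H_\s$; symmetrically $U_{ik}^*U_{jk}=0$. Next, for \eqref{e1.2a} and \eqref{e1.2b}: fix $k,l$ and a basis vector $\ep_{\s,v}\in H_\s$. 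If $k\neq l$ and $\s(k)\neq\s(l)$, then for every $i$ at least one of $U_{il},U_{ik}$ annihilates $H_\s$ (since $i$ cannot equal both $\s(k)$ and $\s(l)$), so every term vanishes. If $k=l$, exactly one index $i=\s(k)$ contributes and $U_{ik}U_{ik}^*\ep_{\s,v}=P_{i,k}\ep_{\s,v}=\ep_{\s,v}$, giving $\mathds{1}$. The only slightly delicate sub-case — and this is where a short computation is genuinely needed — is $k\neq l$ with $\s(k)=\s(l)=:i$ for the \emph{same} $\s$; but $\s$ is a bijection, so $\s(k)=\s(l)$ forces $k=l$, contradiction. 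Hence that sub-case is empty and \eqref{e1.2a}, \eqref{e1.2b} follow.

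The substantive part is the commutation-type relation \eqref{e1.1}. I would evaluate all four operator products $U_{ik}U_{jl}$, $U_{jk}U_{il}$, $U_{il}U_{jk}$, $U_{jl}U_{ik}$ on a fixed $\ep_{\s,v}$. Each product is nonzero only on those $H_\s$ for which the relevant permutation conditions hold; one checks that $U_{ik}U_{jl}$ needs $\s(l)=j$ and, after applying $U_{jl}$ (which does not change the permutation label), $\s(k)=i$ — so all four products are supported on the same set $\{\s:\,\s(k)=i,\ \s(l)=j\}$, and on the complement both sides of \eqref{e1.1} are $0$. On the support, everything is diagonal-plus-shift in the leg variables, so it suffices to compare leg by leg over $\lambda\in\Lambda$, splitting into the cases according to whether $k$ or $l$ (or neither) lies in $\{\lambda_1,\lambda_2\}$. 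The phase bookkeeping in the leg where $k=\lambda_1$ or $k=\lambda_2$ (and similarly for $l$) produces exactly the factors $\omega_{ji}$, $\omega_{kl}$, $\omega_{ji}\omega_{kl}$; the role of the factor $(\overline{\omega_{i,\s(\lambda_2)}}\,\omega_{k,\lambda_2})^m$ in the definition of $U_{ik}^\lambda$ is precisely to make these phases match the commutation coefficients on the noncommutative torus, i.e.\ it encodes $\alpha(x_kx_l)=\omega_{kl}\alpha(x_lx_k)$ faithfully. I expect the main obstacle to be purely organizational: carefully enumerating the sub-cases for the pair $(k,l)$ relative to each label $\lambda=(\lambda_1,\lambda_2)$ — namely whether $k,l$ are both outside $\lambda$, one inside, both inside, and in which slots — and tracking the resulting monomials $(\overline{\omega_{i,\s(\lambda_2)}}\omega_{k,\lambda_2})^{m}$, $(\overline{\omega_{j,\s(\lambda_2)}}\omega_{l,\lambda_2})^{m}$, together with the shifts $m\mapsto m+1$, so that the four contributions recombine into \eqref{e1.1}. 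No single computation is hard, but the case analysis is where an error would hide, so I would present it as a lemma handling one generic leg $\lambda$ and then take the tensor product over all of $\Lambda$.
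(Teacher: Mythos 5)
Your structural setup, the treatment of \eqref{e1.2a}--\eqref{e1.2b} and \eqref{e1.4a}--\eqref{e1.4b}, and the observation that each nonzero restriction $U_{ik}|_{H_\s}$ is unitary are all correct and essentially coincide with the paper's argument. However, your treatment of \eqref{e1.1} contains a genuine error: it is not true that all four products $U_{ik}U_{jl}$, $U_{jk}U_{il}$, $U_{il}U_{jk}$, $U_{jl}U_{ik}$ are supported on the same set of permutations $\{\s:\,\s(k)=i,\ \s(l)=j\}$. Since $U_{ab}$ preserves each $H_\s$ and kills it unless $\s(b)=a$, the products $U_{ik}U_{jl}$ and $U_{jl}U_{ik}$ are indeed supported on $\{\s:\,\s(k)=i,\ \s(l)=j\}$, but $U_{jk}U_{il}$ and $U_{il}U_{jk}$ are supported on the disjoint set $\{\s:\,\s(k)=j,\ \s(l)=i\}$ (for $i\neq j$). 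Consequently your claim that ``on the complement both sides of \eqref{e1.1} are $0$'' fails precisely on the subspaces $H_\s$ with $\s(k)=j$, $\s(l)=i$: there the two middle terms do not vanish, and \eqref{e1.1} reduces to the nontrivial identity $\omega_{ji}U_{jk}U_{il}=\omega_{kl}U_{il}U_{jk}$, i.e.\ $U_{jk}U_{il}=\omega_{ij}\omega_{kl}U_{il}U_{jk}$, which must be verified by the same leg-by-leg phase computation with the roles of $i$ and $j$ interchanged. This is exactly the case the paper handles separately (``Similarly, we check that it holds on $H_\s$, where $\s$ is such that $\s(k)=j$ and $\s(l)=i$''), before concluding that on all remaining $H_\s$ every term vanishes.

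The gap is repairable: split the verification of \eqref{e1.1} into three cases of $\s$ --- $\s(k)=i,\ \s(l)=j$ (where the relation becomes $U_{ik}U_{jl}=\omega_{ji}\omega_{kl}U_{jl}U_{ik}$), $\s(k)=j,\ \s(l)=i$ (the symmetric identity above), and all other $\s$ (where all four terms vanish). Note also that your leg-by-leg computation is only sketched, not carried out; when you do it, the only nontrivial leg is $\lambda_0=(k,l)$ (for $k<l$), where the shift in the second slot interacts with the diagonal phase in the first slot to produce the single factor $\omega_{ji}\omega_{kl}$, exactly as in the paper's proof.
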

\begin{proof}
It follows from the above definitions that 
$$
U_{ik}^\#(H_\s)= 
\begin{cases}
H_\s & \mbox{if $\s(k)=i$,} \\
\{0\} & \mbox{if $\s(k)\neq i$,}
\end{cases}
$$
where $a^\#$ means either $a$ or $a^*$. 
If $i,j,k,l$ are such that $i=j$, $k\neq l$ or $i\neq j$, $k=l$, then there is no permutation $\sigma$ such that $\sigma(k)=i$ and $\sigma(l)=j$. Hence, for any $\sigma$, either $U_{ik}^\#(H_\s)=\{0\}$ or $U_{jl}^\#(H_\s)=\{0\}$. Therefore,
\begin{equation}
\label{e1.13a}
U_{ik}^\#U_{jk}^\#=0,\qquad i\neq j 
\end{equation} 
and
\beq
\label{e1.13b}
U_{ik}^\#U_{il}^\#=0,\qquad k\neq l.
\eeq

Now, assume $i\neq j$ and $k\neq l$. Let $\sigma$ be such that $\sigma(k)=i$ and $\sigma(l)=j$. 
Firstly, observe that 
\beq
\label{tru}
U_{ik}^{\#\lambda} U_{jl}^{\#\lambda}e_{\s,m}^\la = U_{jl}^{\#\lambda} U_{ik}^{\#\lambda}e_{\s,m}^\la
\eeq 
if  $k\notin\lambda$ or $l\notin\lambda$. Further, assume that $k<l$ and $\lambda_0=(k,l)$. Then
$$ 
U_{ik}^{\lambda_0} U_{jl}^{\lambda_0} e^{\lambda_0}_{\s,m}= (\omega_{ji}\omega_{kl})^{m+1}\,e^{\lambda_0}_{\s,m+1}
$$ 
$$ 
U_{jl}^{\lambda_0} U_{ik}^{\lambda_0} e^{\lambda_0}_{\s,m}=
(\omega_{ji}\omega_{kl})^{m}\,e^{\lambda_0}_{\s,m+1}
$$ 
\beg
U_{ik}U_{jl}\ep_{\s,v}&=&\bigotimes\limits_\la U_{ik}^{\la}U_{jl}^{\la}e_{\s,v(\la)}^\la = U_{ik}^{\la_0}U_{jl}^{\la_0}e_{\s,v(\la_0)}^{\la_0}\otimes \bigotimes\limits_{\la\neq\la_0} U_{ik}^{\la}U_{jl}^{\la}e_{\s,v(\la)}^\la\\
&=&(\omega_{ji}\omega_{kl})^{v(\la_0)+1}\,e^{\lambda_0}_{\s,v(\la_0)+1} \otimes\bigotimes\limits_{\la\neq\la_0}
U_{ik}^{\la}U_{jl}^{\la}
 e_{\s,v(\la)}^\la
\eeg
and similarly
$$
U_{jl}U_{ik}\ep_{\s,v}=(\omega_{ji}\omega_{kl})^{v(\la_0)}\,e^{\lambda_0}_{\s,v(\la_0)+1} \otimes\bigotimes\limits_{\la\neq\la_0}
U_{jl}^{\la}U_{ik}^{\la}
e_{\s,v(\la)}^\la .
$$

So, bearing in mind \eqref{tru}, we get
$$
U_{ik}U_{jl}\ep_{\s,v}=\omega_{ji}\omega_{kl}U_{jl}U_{ik}\ep_{\s,v}.
$$

Since $\s(k)\neq j$ and $\s(l)\neq i$, we have also
$$ 
U_{jk} U_{il}\ep_{\s,v}=U_{il}U_{jk}\ep_{\s,v}=0
$$ 
Thus, relation \eqref{e1.1} is satisfied on the subspace $H_\s$. Similarly, we check that it holds on $H_\s$, where $\s$ is such that $\s(k)=j$ and $\s(l)=i$. If $\s(k)\notin\{i,j\}$ or $\s(l)\notin\{i,j\}$, then all four terms in \eqref{e1.1} vanish on $H_\s$. Hence, \eqref{e1.1} is satisfied on the whole space $H$.
Further, if $i,j,k$ are such that $i\neq j$, then due to \eqref{e1.13a}
$$ 
U_{ik}U_{jk}^*=0.
$$ 
$$ 
U_{ik}^*U_{jk}=0.
$$ 
Therefore, relations \eqref{e1.4a} and \eqref{e1.4b} are satisfied.
Finally, let us observe that 
$$ 
U_{ik}U_{ik}^*\ep_{\s,v}= \begin{cases} \ep_{\s,v} &\mbox{if } \sigma(k)=i \\
0 & \mbox{if } \sigma(k)\neq i\end{cases}
$$ 
and
$$ 
\label{e1.23}
U_{ik}^*U_{ik}\ep_{\s,v}= \begin{cases} \ep_{\s,v} &\mbox{if } \sigma(k)=i \\
0 & \mbox{if } \sigma(k)\neq i\end{cases}.
$$ 
Since for every $k$ and $\sigma$ there is exactly one index $i$ such that $\sigma(k)=i$,
$$ 
\sum_i U_{ik}U_{ik}^*\ep_{\s,v}=\ep_{\s,v}
$$ 
and
$$ 
\sum_i U_{ik}^*U_{ik}\ep_{\s,v}=\ep_{\s,v}.
$$ 
Thus, relations \eqref{e1.2a} and \eqref{e1.2b} are also satisfied.
\end{proof}
\begin{cor}
There exists a universal C*-algebra $A_u$ generated by $u_{ik}$, $i,k=1,\ldots,n$, subject to relations listed in Proposition \ref{profil}. 
\end{cor}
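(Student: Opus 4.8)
The plan is to obtain $A_u$ from the standard construction of a universal C*-algebra attached to a set of generators and relations (cf. Blackadar). The only point requiring verification is that the relations of Proposition \ref{profil} are \emph{admissible}, i.e.\ that they force a uniform bound on the norm of each generator in every $^*$-representation; once this is established, the rest of the construction is automatic.

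First I would extract the bound directly from \eqref{e1.2a}. In any unital $^*$-algebra whose generators $u_{ik}$ satisfy the relations of Proposition \ref{profil}, setting $k=l$ in \eqref{e1.2a} gives $\sum_{i=1}^{n}u_{ik}u_{ik}^{*}=\jed$; since every summand is positive, $0\le u_{ik}u_{ik}^{*}\le\jed$, hence in any C*-representation $\pi$ one has $\|\pi(u_{ik})\|^{2}=\|\pi(u_{ik})\pi(u_{ik})^{*}\|\le 1$. With this in hand I would let $\mathcal{F}$ be the free unital $^*$-algebra on $n^{2}$ symbols $u_{ik}$, let $\mathcal{J}\subset\mathcal{F}$ be the two-sided $^*$-ideal generated by (the self-adjoint parts of) the relations \eqref{e1.1}--\eqref{e1.4b}, and put $\mathcal{A}_{0}=\mathcal{F}/\mathcal{J}$. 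For $a\in\mathcal{A}_{0}$ define
$$
\|a\|_{u}=\sup\bigl\{\|\pi(a)\|:\ \pi\ \text{is a $^*$-representation of}\ \mathcal{A}_{0}\ \text{on a Hilbert space}\bigr\}.
$$
Each $a\in\mathcal{A}_{0}$ is a noncommutative polynomial in the $u_{ik}$ and their adjoints, and each of the latter has norm $\le 1$ in every such $\pi$ by the previous paragraph, so the supremum is finite (bounded by the sum of the moduli of the coefficients of $a$); as usual it is enough to let $\pi$ range over representations on Hilbert spaces of bounded cardinality, so the supremum is taken over a set. Thus $\|\cdot\|_{u}$ is a C*-seminorm, and the completion $A_{u}$ of $\mathcal{A}_{0}/\{a:\|a\|_{u}=0\}$ is a C*-algebra generated by the images of the $u_{ik}$; these satisfy the relations of Proposition \ref{profil} and have the required universal property, namely every $n^{2}$-tuple in a C*-algebra $B$ satisfying those relations induces a unique $^*$-homomorphism $A_{u}\to B$ sending $u_{ik}$ to the corresponding element.

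Finally I would record that $A_{u}\neq\{0\}$ and the generators are nonzero: by Proposition \ref{rep} the assignment $u_{ik}\mapsto U_{ik}$ defines a $^*$-representation of $\mathcal{A}_{0}$ on $H$, and since the $U_{ik}$ are manifestly nonzero we get $\|u_{ik}\|_{u}\ge\|U_{ik}\|>0$. The main (and essentially only) obstacle is the admissibility check, which is disposed of by the elementary estimate above; the substantive consistency of the relations has in effect already been verified in Proposition \ref{rep}.
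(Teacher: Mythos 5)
Your proof is correct and follows essentially the same route as the paper: the uniform bound $\|\pi(u_{ik})\|\le 1$ extracted from the unitarity-type relation (you use \eqref{e1.2a}, the paper uses \eqref{e1.2b}, an immaterial difference), together with the existence of a concrete representation from Proposition \ref{rep}, with the generic generators-and-relations construction cited to Blackadar. The only difference is that you spell out the free-algebra/sup-seminorm/completion machinery that the paper leaves to the reference.
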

\begin{proof}
In order to show that $A_u$ is well defined one need to construct at least one representation of the relations \eqref{e1.1} -- \eqref{e1.4b} in bounded operators and to show that 
$$
\sup\{\Vert\pi(u_{ik})\Vert:\,\mbox{$\pi$ is a representation of relations \eqref{e1.1} -- \eqref{e1.4b}}\} <\infty
$$
for every $i,k$ (cf. \cite{Bla}).
The first task was done in Proposition \ref{rep}.
Now, assume that for some Hilbert space $H$, operators $\pi(u_{ik})$ satisfy relations listed in Proposition \ref{profil}. It follows from relation \eqref{e1.2b} that  for every $x\in H$, 
$$ 
\left\|x\right\|^2=\left\langle x,x \right\rangle=\sum_i\left\langle x,\pi\left(u^*_{ik}u_{ik}\right)x \right\rangle = \sum_{i}\left\|\pi\left(u_{ik}\right)x\right\|^2,
$$  
hence
$\left\|\pi\left(u_{ik}\right)\right\|\leq 1$,
and
$\sup_{\pi}\left\|\pi(u_{ik})\right\|\leq 1 $. 
\end{proof}

\subsection{Structure of the algebra $A_u$}
For $\te=(\te_{ij})$ being a real skew-symmetric $n\times n$ matrix, we consider a \textit{quantum multitorus} C*-algebra $A_\te^n$ defined as  
$$ 
A_\te^n=\bigoplus_{\s\in S_n}C\left(\bT_{\te^{(\s)}}^n\right),
$$ 
where for $\sigma\in S_n$, $\te^{(\s)}=(\te_{ij}^{(\s)})\in M_n(\bR)$ is a skew-symmetric matrix defined by 
$$ 
\te_{ij}^{(\s)}=\te_{ji}+\te_{\si(i),\si(j)}.
$$ 
For any $\s\in S_n$, let $x_{\s,1},\ldots,x_{\s,n}$ be the system of generators of $C(\bT_{\te^{(\s)}}^n)$. The algebra $A_\te^n$ is generated by all $x_{\s,i}$, $i=1,\ldots,n$, $\s\in S_n$. The elements satisfy the following
relations
\beq
\label{mult1}
x_{\s,i}x_{\t,j}=\delta_{\s,\t}\om_{ij}^{(\s)}x_{\t,j}x_{\s,i},
\eeq
\beq
\label{mult2}
x_{\s,i}^{} x_{\t,j}^*=\delta_{\s,\t}\om_{ji}^{(\s)} x_{\t,j}^* x_{\s,i}^{},
\eeq
\beq
\label{mult3}
\sum_{\s\in S_n}x_{\s,i}^{} x_{\s,i}^*=\jed=\sum_{\s\in S_n} x_{\s,i}^*x_{\s,i}^{},
\eeq
where $\om_{ij}^{(\s)}=e^{\mathrm{2\pi i}\te_{ij}^{(\s)}}=\om_{j,i}\om_{\si(i),\si(j)}$.
\begin{m}
\label{m:mult}
It can be shown by standard arguments (\cite{Bla}) that $A_\te^n$ is isomorphic to the universal C*-algebra generated by elements $x_{\s,i}$ subject to relations \eqref{mult1} -- \eqref{mult3}. 
\end{m}

Our goal is to show the following.
\begin{thm}
\label{iso}
The universal C*-algebras $A_u$ is isomorphic to $A_\te^n$.
\end{thm}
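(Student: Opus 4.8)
The plan is to prove Theorem~\ref{iso} by exhibiting mutually inverse unital $^*$-homomorphisms $\phi\colon A_u\to A_\te^n$ and $\psi\colon A_\te^n\to A_u$.

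\emph{The map $\phi$.} Using the universal property of $A_u$, I would let $\phi(u_{ik})$ be the element of $A_\te^n=\bigoplus_{\s\in S_n}C(\bT_{\te^{(\s)}}^n)$ whose $\s$-th coordinate equals $\delta_{\s(k),i}\,x_{\s,i}$. One checks directly that these elements satisfy \eqref{e1.1}--\eqref{e1.4b}: within a single summand the only surviving products are those among the $x_{\s,i}$; relations \eqref{e1.2a}--\eqref{e1.4b} reduce to unitarity of the $x_{\s,i}$ together with the orthogonality of the summand units; and \eqref{e1.1} reduces, on the $\s$-summand with $\s(k)=i$, $\s(l)=j$, to the defining relation $x_{\s,i}x_{\s,j}=\om_{ij}^{(\s)}x_{\s,j}x_{\s,i}$, once one notes $\om_{ij}^{(\s)}=\om_{ji}\om_{\si(i),\si(j)}=\om_{ji}\om_{kl}$. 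Hence $\phi$ is a well-defined $^*$-homomorphism. Writing $p_{ik}:=u_{ik}u_{ik}^*$, a short computation gives $\phi(p_{ik})=\sum_{\s\colon\s(k)=i}\jed_\s$ (the $\jed_\s$ being the unit of the $\s$-summand), hence $\phi\big(\prod_k p_{\s(k),k}\big)=\jed_\s$; in particular $\phi$ is surjective.

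\emph{Structure of $A_u$.} This is the heart of the argument. Set $q_{ik}:=u_{ik}^*u_{ik}$. From \eqref{e1.2a}, \eqref{e1.2b}, \eqref{e1.4a}, \eqref{e1.4b} one first gets that, for each fixed $k$, the families $(p_{ik})_i$ and $(q_{ik})_i$ consist of pairwise orthogonal projections summing to $\jed$, so that each $u_{ik}$ is a partial isometry with range projection $p_{ik}$ and source projection $q_{ik}$. Next, multiplying \eqref{e1.1} on the left and right by suitable $u_{ab}$, $u_{ab}^*$, one shows that $u_{ik}^*u_{il}=0$ and $p_{ik}u_{il}=0$ for $k\ne l$; hence $(p_{ik})_k$ and $(q_{ik})_k$ are, for each fixed $i$, also pairwise orthogonal with sum $\jed$, that $q_{ik}=p_{ik}$ for all $i,k$, and --- using \eqref{e1.1} once more --- that all the projections $p_{ik}$ commute with one another. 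It follows that $P_\s:=\prod_k p_{\s(k),k}$ is a projection, $P_\s P_\t=0$ for $\s\ne\t$, and $\sum_{\s\in S_n}P_\s=\prod_k\big(\sum_i p_{ik}\big)=\jed$, a product $\prod_k p_{f(k),k}$ vanishing unless $f$ is a bijection. Moreover each $P_\s$ commutes with every $u_{ik}$ and $P_\s u_{ik}=0$ unless $\s(k)=i$, which yields the key identity $u_{ik}=\sum_{\s\colon\s(k)=i}P_\s u_{ik}P_\s$.

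\emph{The map $\psi$ and conclusion.} For each $\s$ put $v_{\s,i}:=P_\s u_{i,\si(i)}P_\s$. By the above these are unitaries of the corner C*-algebra $P_\s A_u P_\s$ (with unit $P_\s$), and they satisfy $v_{\s,i}v_{\s,j}=\om_{ij}^{(\s)}v_{\s,j}v_{\s,i}$ --- for $i\ne j$ this is once more read off from \eqref{e1.1} with $k=\si(i)$, $l=\si(j)$, together with $P_\s p_{ik}=P_\s$. By the universal description of the noncommutative torus, $x_{\s,i}\mapsto v_{\s,i}$ extends to a unital $^*$-homomorphism $\psi_\s\colon C(\bT_{\te^{(\s)}}^n)\to P_\s A_u P_\s$; since $P_\s P_\t=0$, the ranges of $\psi_\s$ and $\psi_\t$ annihilate one another for $\s\ne\t$, so $\psi\big((a_\s)_\s\big):=\sum_{\s}\psi_\s(a_\s)$ defines a unital $^*$-homomorphism $\psi\colon A_\te^n\to A_u$ (invoking Remark~\ref{m:mult}). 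Finally I check on generators that $\phi\circ\psi=\id$ and $\psi\circ\phi=\id$: from $\phi(P_\s)=\jed_\s$ one gets $\phi(v_{\s,i})=x_{\s,i}$, hence $\phi\psi(x_{\s,i})=x_{\s,i}$; conversely, since $\si(i)=k$ whenever $\s(k)=i$, the identity $u_{ik}=\sum_{\s\colon\s(k)=i}P_\s u_{ik}P_\s$ gives $\psi\phi(u_{ik})=\sum_{\s\colon\s(k)=i}v_{\s,i}=u_{ik}$. Thus $\phi$ is an isomorphism. The main obstacle is the structural analysis of $A_u$, and above all the commutativity of the projections $p_{ik}$: the ``magic-unitary-type'' relations \eqref{e1.2a}--\eqref{e1.4b} alone do not force it, and it must be extracted from \eqref{e1.1}; this commutativity is precisely what turns the $P_\s$ into an orthogonal partition of $\jed$ indexed by $S_n$ and lets each corner $P_\s A_u P_\s$ be recognized as $C(\bT_{\te^{(\s)}}^n)$, the remaining $\om^{(\s)}$-twisted commutation identities then being a careful but routine consequence.
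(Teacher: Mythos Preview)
Your argument is correct and follows essentially the same route as the paper. The ``structure of $A_u$'' paragraph reproduces exactly the content of Propositions~\ref{prop}--\ref{cent} (partial isometries, unitarity of $U$ and $U^T$, the orthogonality relations for $p_{ik}$ and $q_{ik}$, normality, the refined commutation $u_{ik}u_{jl}=\omega_{ji}\omega_{kl}u_{jl}u_{ik}$, and centrality of the $P_\s$); your $\phi(u_{ik})$ is precisely the paper's element $v_{ik}=\sum_{\s:\s(k)=i}x_{\s,i}$, and your $v_{\s,i}=P_\s u_{i,\si(i)}P_\s$ is the paper's $X_{\s,i}$ from \eqref{Xsi}. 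The only organizational difference is that the paper verifies the \emph{universal property} of $A_\te^n$ with respect to relations \eqref{e1.1}--\eqref{e1.4b} (any representation of the relations factors uniquely through $A_\te^n$ via $x_{\s,i}\mapsto X_{\s,i}$), whereas you construct the two maps $\phi,\psi$ and check they are mutually inverse on generators; these are equivalent formulations of the same isomorphism.

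One small caveat about your sketch: the phrase ``multiplying \eqref{e1.1} on the left and right by suitable $u_{ab},u_{ab}^*$'' undersells what is actually needed. The orthogonality of $(p_{ik})_k$ for fixed $i$ comes from the unitarity of $U^T$ (no use of \eqref{e1.1}); by contrast, the crucial commutativity of $p_{ik}$ with $p_{jl}$ for $i\neq j$, $k\neq l$ --- which you correctly flag as the main obstacle --- ultimately rests on Proposition~\ref{pro1.11} and Proposition~\ref{uug}, and the paper proves these by Hilbert-space kernel/range arguments rather than by naive left/right multiplication. Since $A_u$ is a C$^*$-algebra this is of course legitimate, but your wording suggests a purely algebraic manipulation that is not quite how the argument runs.
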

Before the proof of the theorem we formulate some necessary propositions about operators satisfying relations listed in Proposition \ref{profil}. We start with the following two lemmas.
\begin{lm}
\label{er} 
Let $p_1,\ldots,p_n$ 
be orthogonal projections on some Hilbert space $H$. 
\begin{enumerate}
\item
If $\sum_{i=1}^n p_i=n\mathds{1}_{B(H)}$, then $p_i=\mathds{1}_{B(H)}$ for each $i$.
\item
If $\sum_{i=1}^n p_i=\mathds{1}_{B(H)}$, then $p_ip_j=0$ for every $i,j$ such that $i\neq j$.
\end{enumerate}
\end{lm}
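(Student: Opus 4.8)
The plan is to derive both statements from a single elementary observation: if $a_1,\dots,a_m\in B(H)$ are positive operators with $a_1+\dots+a_m=0$, then $a_k=0$ for every $k$. Indeed, for any $x\in H$ we have $0\le\langle a_k x,x\rangle\le\sum_{l=1}^m\langle a_l x,x\rangle=0$, so $\langle a_k x,x\rangle=0$ for all $x$; writing $a_k=b_k^*b_k$ with $b_k=a_k^{1/2}$ gives $\|b_k x\|^2=0$ for all $x$, hence $b_k=0$ and $a_k=0$. I would record this as a one-line preliminary remark and then apply it twice, once for each part.

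For part (1), each $\mathds{1}_{B(H)}-p_i$ is positive, since $p_i$ is a projection and so $0\le p_i\le\mathds{1}_{B(H)}$. Moreover
\[
\sum_{i=1}^n\bigl(\mathds{1}_{B(H)}-p_i\bigr)=n\,\mathds{1}_{B(H)}-\sum_{i=1}^n p_i=0 ,
\]
so by the observation above $\mathds{1}_{B(H)}-p_i=0$, i.e.\ $p_i=\mathds{1}_{B(H)}$, for each $i$.

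For part (2), fix $j$ and compress the identity $\sum_{i=1}^n p_i=\mathds{1}_{B(H)}$ by $p_j$ on both sides, obtaining $\sum_{i=1}^n p_j p_i p_j=p_j$. Separating the term $i=j$, for which $p_j p_j p_j=p_j$, this rearranges to $\sum_{i\neq j} p_j p_i p_j=0$. Each summand is positive: using $p_i=p_i^*=p_i^2$ one has $p_j p_i p_j=p_j p_i^* p_i p_j=(p_i p_j)^*(p_i p_j)$. Applying the observation once more gives $(p_i p_j)^*(p_i p_j)=0$, hence $p_i p_j=0$, for every $i\neq j$, which is the claim. (Alternatively, from $\sum_i p_i=\mathds{1}_{B(H)}$ and positivity of the omitted terms one gets $p_i+p_j\le\mathds{1}_{B(H)}$, whence $p_i\le\mathds{1}_{B(H)}-p_j$ as projections and thus $p_i p_j=0$; but the compression argument is more self-contained.)

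I do not expect a genuine obstacle here. The only mildly delicate point is that "a finite sum of positive operators that vanishes forces each summand to vanish" must be justified in $B(H)$ rather than borrowed from the scalar case, and isolating it up front makes both parts immediate. Grouping $p_i$, $p_j$, and $\sum_{k\neq i,j}p_k$ to reduce part (2) to the case $n=2$ is also possible but buys nothing over the direct compression.
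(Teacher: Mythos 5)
Your argument is correct: the reduction of both parts to the single observation that a vanishing finite sum of positive operators has vanishing summands, together with the compression $p_j\bigl(\sum_i p_i\bigr)p_j=p_j$ and the identity $p_jp_ip_j=(p_ip_j)^*(p_ip_j)$, settles the lemma completely. The paper itself dismisses the proof as ``routine'' and supplies no argument, so there is nothing to compare against; your write-up simply fills in exactly the kind of elementary verification the authors had in mind.
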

\begin{proof} Routine. \end{proof}
\begin{lm}
\label{lem2}
Let $H$ be a Hilbert space. 
\begin{enumerate}
\item
If $S_1,S_2\subset H$ are two subsets, then $S_1^\perp \cap S_2^\perp=(S_1+S_2)^\perp$.
\item
If $K_1,K_2\subset H$ are closed subspaces, then $K_1^\perp+ K_2^\perp=(K_1\cap K_2)^\perp$.
\end{enumerate}
\end{lm}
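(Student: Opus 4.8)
The plan is to deduce both identities from the definition $S^\perp=\{x\in H:\langle x,s\rangle=0\text{ for every }s\in S\}$, proving (1) by a direct computation and then obtaining (2) from (1) by complementation.

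For part (1) I would argue by two inclusions. The inclusion $S_1^\perp\cap S_2^\perp\subseteq(S_1+S_2)^\perp$ is immediate from bilinearity of the inner product: if $x$ is orthogonal to every vector of $S_1$ and to every vector of $S_2$, then $\langle x,s_1+s_2\rangle=\langle x,s_1\rangle+\langle x,s_2\rangle=0$ for all $s_1\in S_1$, $s_2\in S_2$. For the reverse inclusion I would use that $0\in S_1$ and $0\in S_2$ (automatic when the $S_i$ are linear subspaces, which is the case of interest here): given $x\in(S_1+S_2)^\perp$, choosing $s_2=0$ shows $x\perp s_1$ for every $s_1\in S_1$, so $x\in S_1^\perp$, and symmetrically $x\in S_2^\perp$.

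For part (2) the plan is to apply part (1) to the subspaces $K_1^\perp$ and $K_2^\perp$, obtaining $(K_1^\perp)^\perp\cap(K_2^\perp)^\perp=(K_1^\perp+K_2^\perp)^\perp$. Because $K_1$ and $K_2$ are closed, the double-complement theorem gives $(K_i^\perp)^\perp=K_i$, so the left-hand side equals $K_1\cap K_2$. Taking orthogonal complements of both sides and using that $S^{\perp\perp}=\overline{\mathrm{span}\,S}$ for the subspace $S=K_1^\perp+K_2^\perp$ then yields $(K_1\cap K_2)^\perp=\overline{K_1^\perp+K_2^\perp}$.

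The only genuinely delicate point is the closedness of the algebraic sum $K_1^\perp+K_2^\perp$: in an arbitrary infinite-dimensional Hilbert space this sum need not be closed, so the unconditional version of the identity reads $(K_1\cap K_2)^\perp=\overline{K_1^\perp+K_2^\perp}$, and the closure may be dropped exactly when $K_1^\perp+K_2^\perp$ is already closed. This is the step I expect to be the main obstacle, and it is harmless in the present setting, where the relevant subspaces are adapted to the finite orthogonal decomposition $H=\bigoplus_{\s\in S_n}H_\s$: a sum of two subspaces each of which is a sum of blocks $H_\s$ is again such a sum, and hence closed. Everything else reduces to the definition of $\perp$ and the projection (double-complement) theorem.
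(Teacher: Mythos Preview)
The paper's own proof is the single word ``Elementary'', so there is no argument to compare against; your write-up already goes well beyond what the paper offers, and your route via the double-complement theorem is the standard one.

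Your treatment of (1) is correct once $0\in S_i$, and your caveat is well placed: as literally stated for arbitrary subsets the identity can fail (take $H=\bR$, $S_1=\{1\}$, $S_2=\{-1\}$, so $(S_1+S_2)^\perp=\bR$ while $S_1^\perp\cap S_2^\perp=\{0\}$). In every application the paper makes of (1) the $S_i$ are closed subspaces, so no harm is done.

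For (2) you correctly derive the unconditional form $(K_1\cap K_2)^\perp=\overline{K_1^\perp+K_2^\perp}$ and rightly note that dropping the closure requires $K_1^\perp+K_2^\perp$ to be closed. Your justification via the block decomposition $H=\bigoplus_{\s}H_\s$ is slightly misplaced, however: that decomposition belongs to the concrete representation of Proposition~\ref{rep}, whereas the lemma is invoked in Propositions~\ref{ret}, \ref{pro1.11} and \ref{uug} for \emph{arbitrary} operators $U_{ik}$ satisfying the relations, before the central projections $P_\s$ have been constructed. A cleaner way to close the gap is this: in Propositions~\ref{ret} and \ref{pro1.11} the two subspaces whose complements are being added are mutually orthogonal, so one complement already contains the other subspace and the sum is all of $H$; and in Proposition~\ref{uug} the closure version already suffices, since the aim there is to show that the bounded operators $U_{jl}U_{ik}^*$ and $U_{ik}^*U_{jl}$ vanish on $(H_{ik}\cap H_{jl})^\perp$, for which vanishing on the dense subspace $H_{ik}^\perp+H_{jl}^\perp$ is enough.
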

\begin{proof} Elementary. \end{proof}

Now, we assume that $H$ is some Hilbert space, and $U_{ik}\in B(H)$ are arbitrary operators satisfying relations \eqref{e1.1} -- \eqref{e1.4b}. Let $P_{ik}=U_{ik}^*U_{ik}^{}$ and $Q_{ik}=U_{ik}^{}U_{ik}^*$.
\begin{pro} 
\label{prop}
Operators $U_{ik}^{}$ and $U_{ik}^*$ are partial isometries.
\end{pro}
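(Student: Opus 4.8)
The plan is to prove that each $P_{ik}=U_{ik}^*U_{ik}^{}$ and each $Q_{ik}=U_{ik}^{}U_{ik}^*$ is an orthogonal projection. This suffices, since an operator $T$ on a Hilbert space is a partial isometry precisely when $T^*T$ is a projection; applying this with $T=U_{ik}$ gives that $U_{ik}$ is a partial isometry, and with $T=U_{ik}^*$ gives that $U_{ik}^*$ is one. Both $P_{ik}$ and $Q_{ik}$ are visibly positive and self-adjoint, so the only thing left to check is idempotency.

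The key point is that, for a fixed column index $k$, the family $\{Q_{ik}\}_{i=1}^n$ consists of mutually orthogonal operators which sum to the identity. Indeed, for $i\neq j$ relation \eqref{e1.4b} gives $U_{ik}^*U_{jk}^{}=0$, hence
$$
Q_{ik}Q_{jk}=U_{ik}^{}(U_{ik}^*U_{jk}^{})U_{jk}^*=0,
$$
while putting $l=k$ in \eqref{e1.2a} yields $\sum_{i=1}^n Q_{ik}=\jed$. Multiplying this last identity on the left by $Q_{ik}$ and using the orthogonality relation, all cross terms drop out and we are left with $Q_{ik}=Q_{ik}\sum_{j}Q_{jk}=Q_{ik}^2$. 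Thus each $Q_{ik}$ is idempotent, and being self-adjoint it is an orthogonal projection; consequently $U_{ik}^*$ is a partial isometry. The identical argument with \eqref{e1.4a} (used in the form $U_{ik}^{}U_{jk}^*=0$ for $i\neq j$, obtained by interchanging the names of $i$ and $j$) together with $l=k$ in \eqref{e1.2b} shows that $P_{ik}P_{jk}=U_{ik}^*(U_{ik}^{}U_{jk}^*)U_{jk}^{}=0$ for $i\neq j$ and $\sum_i P_{ik}=\jed$, whence $P_{ik}=P_{ik}^2$ and $U_{ik}$ is a partial isometry as well.

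There is essentially no serious obstacle here: once the orthogonality relations $Q_{ik}Q_{jk}=0$ (resp.\ $P_{ik}P_{jk}=0$) for $i\neq j$ are in hand, the conclusion is immediate from the ``partition of unity'' identities \eqref{e1.2a}--\eqref{e1.2b}. The only point demanding a little care is bookkeeping: one must read off \eqref{e1.4a} and \eqref{e1.4b} with the correct index placement so that the middle factor in the product $Q_{ik}Q_{jk}$ (resp.\ $P_{ik}P_{jk}$) is genuinely one of the vanishing products. One could also shorten the argument by noting that as soon as $P_{ik}$ is shown to be a projection, $Q_{ik}$ is automatically one (and conversely) by the general theory of partial isometries; but since both computations are one line, it costs nothing to carry them out directly.
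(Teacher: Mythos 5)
Your proof is correct and follows essentially the same route as the paper: both arguments reduce the statement to showing that $P_{ik}$ and $Q_{ik}$ are self-adjoint idempotents, using the relations \eqref{e1.2a}--\eqref{e1.2b} together with the orthogonality relations \eqref{e1.4a}--\eqref{e1.4b}. The only cosmetic difference is that the paper substitutes $\jed-\sum_{j\neq i}U_{jk}U_{jk}^*$ for the middle factor of $P_{ik}^2$, whereas you multiply the resolution of identity $\sum_j Q_{jk}=\jed$ (resp.\ $\sum_j P_{jk}=\jed$) by $Q_{ik}$ (resp.\ $P_{ik}$) and discard the vanishing cross terms.
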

\begin{proof}
One need to show that $P_{ik}$ and $Q_{ik}$ are orthogonal projections.
Obviuously, $P_{ik}$ and $Q_{ik}$ are selfadjoint. Moreover, 
$$
P_{ik}^2=U_{ik}^*U_{ik}U_{ik}^*U_{ik}=U_{ik}^*\left(\mathds{1}-\sum_{j\neq i}U_{jk}U_{jk}^*\right)U_{ik}=U_{ik}^*U_{ik}-\sum_{j\neq i}U_{ik}^*U_{jk}U_{jk}^*U_{ik}=U_{ik}^*U_{ik}=P_{ik}.
$$
The second equality follows from \eqref{e1.2a}, while the fourth from \eqref{e1.4a}. Thus, $P_{ik}$ is an orthogonal projection, hence $U_{ik}$ is a partial isometry. Similarly, using \eqref{e1.2b} and \eqref{e1.4b}, one can show that $Q_{ik}^2=Q_{ik}$ and this shows that $U_{ik}^*$ is a partial isometry.
\end{proof}

\begin{pro} 
The matrix $U=(U_{kl})\in M_n(B(H))$ is unitary, i.e. $U^*U=\mathds{1}_{M_n(B(H))}=UU^*$.
\end{pro}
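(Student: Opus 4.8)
The strategy is to verify $U^*U=\jed$ and $UU^*=\jed$ separately by reading off the matrix entries in $M_n(B(H))$ and substituting the relations of Proposition~\ref{profil}; throughout I will use Proposition~\ref{prop}, so that $P_{ik}=U_{ik}^*U_{ik}^{}$ and $Q_{ik}=U_{ik}^{}U_{ik}^*$ are orthogonal projections.

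The identity $U^*U=\jed$ is immediate: its $(k,l)$ entry is $\sum_{i=1}^n U_{ik}^*U_{il}^{}$, which equals $\delta_{kl}\jed$ directly by \eqref{e1.2b}. In particular $U$ is an isometry in $M_n(B(H))$, hence $E:=UU^*$ is a projection there, because $E^2=U(U^*U)U^*=E$ and $E^*=E$.

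It then remains to show $E=\jed$. For $k\ne l$ the $(k,l)$ entry of $E$ equals $\sum_{i=1}^n U_{ki}^{}U_{li}^*$, and every summand vanishes by \eqref{e1.4a}; so $E$ is block diagonal, $E=\mathrm{diag}(E_1,\dots,E_n)$ with $E_k=\sum_{i=1}^n U_{ki}^{}U_{ki}^*=\sum_{i=1}^n Q_{ki}$. Since $E$ is a projection, so is each $E_k$, whence $E_k\le\jed$. The last step is a counting argument: regrouping the double sum by the column index and using that $\sum_{k=1}^n U_{ki}^{}U_{ki}^*=\jed$ by \eqref{e1.2a}, one gets $\sum_{k=1}^n E_k=\sum_{i=1}^n\bigl(\sum_{k=1}^n Q_{ki}\bigr)=n\jed$; therefore $\sum_{k=1}^n(\jed-E_k)=0$ is a sum of $n$ positive operators, forcing $E_k=\jed$ for every $k$, i.e. $UU^*=\jed$.

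I do not expect a genuine obstacle; the only point needing attention is the row/column asymmetry of Proposition~\ref{profil}, where the ``sum $=\jed$'' identities \eqref{e1.2a}, \eqref{e1.2b} and the vanishing identities \eqref{e1.4a}, \eqref{e1.4b} are all written with a common \emph{second} index, so one must track which index is summed when passing between $U^*U$ and $UU^*$. It is worth noting that neither the quadratic relation \eqref{e1.1} nor the ``row orthogonality'' $Q_{ki}Q_{kj}=0$ ($i\ne j$) is needed here --- the latter comes for free once $E$ is known to be a projection.
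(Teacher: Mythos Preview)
Your proof is correct and follows essentially the same line as the paper's: $U^*U=\jed$ from \eqref{e1.2b}, then $UU^*$ is a projection with vanishing off-diagonal entries, and the trace-type count $\sum_k E_k=n\jed$ via \eqref{e1.2a} forces each diagonal block to be $\jed$. The only cosmetic difference is that you spell out the positivity argument $\sum_k(\jed-E_k)=0$ explicitly, whereas the paper packages it as Lemma~\ref{er}(1); incidentally, your citation of \eqref{e1.4a} for the off-diagonal vanishing is the right one (the paper's text points to \eqref{e1.4b} there, which appears to be a slip).
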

\begin{proof} 
It follows from \eqref{e1.2b} that $U^*U=\mathds{1}_{M_n(B(H))}$. 
It remains to show that $UU^*=\jed_{M_n(B(H))}$. Firstly, notice that $UU^*$ is a projection. Moreover, it follows from \eqref{e1.4b} that
$(UU^*)_{kl}=\sum_{i}U_{ki}U_{li}^*=0$ 
for $k\neq l$. Therefore, diagonal entries of the matrix $UU^*$ are projections. Now, observe that 
$$ 
\sum_k(UU^*)_{kk}=\sum_{k}\left(\sum_{i} U_{ki}U_{ki}^*\right)=\sum_{i}\left(\sum_{k} U_{ki}U_{ki}^*\right)=\sum_{i}\mathds{1}=n\mathds{1},
$$ 
where the third equality follows from \eqref{e1.2a}. Since each diagonal term $(UU^*)_{kk}$ is a projection,  we conclude from Lemma \ref{er} that $(UU^*)_{kk}=\jed$ for each $k=1,\ldots,n$. Thus $UU^*=\jed_{M_n(B(H))}$.
\end{proof}
\begin{m} 
\label{uT}
By similar arguments one can show that the transpose matrix $U^T$ is also unitary.
\end{m}

\begin{cor}
\label{orto}
We have the following orthogonality relations:
\begin{equation}
\label{pik}
P_{ik}P_{il}=0\qquad \textrm{if $k\neq l$,}
\end{equation}
\begin{equation}
\label{qik}
Q_{ik}Q_{il}=0\qquad  \textrm{if $k\neq l$,}
\end{equation}
\begin{equation}
\label{pijk}
P_{ik}P_{jk}=0\qquad \textrm{if $i\neq j$,}
\end{equation}
\begin{equation}
Q_{ik}Q_{jk}=0\qquad \textrm{if $i\neq j$,}
\end{equation}
where $i,j,k,l=1,\ldots,n$.
\end{cor}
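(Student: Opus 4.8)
The plan is to deduce every one of the four orthogonality relations from part (2) of Lemma \ref{er}. By Proposition \ref{prop} each $P_{ik}=U_{ik}^*U_{ik}$ and each $Q_{ik}=U_{ik}U_{ik}^*$ is an orthogonal projection, so for each relation it suffices to exhibit a family of these projections whose sum equals $\mathds{1}_{B(H)}$ and then invoke Lemma \ref{er}(2) to get pairwise orthogonality. Thus the whole argument reduces to spotting four decompositions of the identity already contained in the relations of Proposition \ref{profil} together with the unitarity statements proved above.

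First I would handle \eqref{pijk} and the last relation, which involve a fixed second index. The $(k,k)$-entry of $U^*U=\mathds{1}_{M_n(B(H))}$ — equivalently, relation \eqref{e1.2b} with $l=k$ — gives $\sum_{i=1}^n P_{ik}=\mathds{1}_{B(H)}$ for each fixed $k$, and Lemma \ref{er}(2) then forces $P_{ik}P_{jk}=0$ whenever $i\neq j$. In the same way, relation \eqref{e1.2a} with $l=k$ reads $\sum_{i=1}^n Q_{ik}=\mathds{1}_{B(H)}$, so Lemma \ref{er}(2) gives $Q_{ik}Q_{jk}=0$ for $i\neq j$. (These two could equally well be obtained by the one-line computations $P_{ik}P_{jk}=U_{ik}^*(U_{ik}U_{jk}^*)U_{jk}=0$, using the adjoint of \eqref{e1.4a}, and $Q_{ik}Q_{jk}=U_{ik}(U_{ik}^*U_{jk})U_{jk}^*=0$, using \eqref{e1.4b}; but the uniform route through Lemma \ref{er} is cleaner.)

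For \eqref{pik} and \eqref{qik}, where the first index is fixed, I would instead use the transpose matrix. By Remark \ref{uT} the matrix $U^T$ is unitary, so the $(i,i)$-entry of $(U^T)^*U^T=\mathds{1}$ gives $\sum_{k=1}^n U_{ik}^*U_{ik}=\sum_{k=1}^n P_{ik}=\mathds{1}_{B(H)}$ for each fixed $i$, whence $P_{ik}P_{il}=0$ for $k\neq l$ by Lemma \ref{er}(2); this is \eqref{pik}. Similarly, the $(i,i)$-entry of $UU^*=\mathds{1}$, which was established in the proposition preceding Remark \ref{uT}, gives $\sum_{k=1}^n U_{ik}U_{ik}^*=\sum_{k=1}^n Q_{ik}=\mathds{1}_{B(H)}$ for each fixed $i$, so $Q_{ik}Q_{il}=0$ for $k\neq l$ by Lemma \ref{er}(2), which is \eqref{qik}. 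I do not anticipate any real obstacle here; the only point that requires care is the bookkeeping of which index is being summed over, and hence whether one should read off a diagonal entry of $U$ or of its transpose $U^T$.
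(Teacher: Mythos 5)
Your proof is correct and is essentially the paper's argument: both establish, for each of the four relations, that the relevant family of projections $P_{ik}$ or $Q_{ik}$ sums to $\mathds{1}$ (via unitarity of $U$ and $U^T$, i.e.\ relations \eqref{e1.2a}--\eqref{e1.2b}) and then invoke Lemma \ref{er}(2). The only difference is that you write out all four decompositions explicitly while the paper does the first and says ``similarly,'' so there is nothing to add.
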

\begin{proof}
Let $i$ be fixed. Since the matrix $U^T$ is unitary,  
$\sum_k P_{ik}=\sum_k U_{ik}^*U_{ik}^{}=\jed$. As $P_{ik}$ are orthogonal projections, the equality in \eqref{pik} should be satisfied for every $k\neq l$ (cf. Lemma \ref{er}). The rest of the relations can be showed similarly using unitarity of $U$ and $U^T$. 
\end{proof}

Let $K_{ik}=\mathrm{Im}(P_{ik})=\mathrm{Im}(U_{ik}^*)$ and  $H_{ik}=\mathrm{Im}(Q_{ik})=\mathrm{Im}(U_{ik})$. 
\begin{cor} 
\label{zeros}
For any $i,k,l$ such that $k\neq l$,
\begin{equation}
U_{ik}^*U_{il}=0,
\eeq
\beq
\label{uiluikg}
U_{il}U_{ik}^*=0.
\end{equation}
\end{cor}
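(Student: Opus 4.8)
The plan is to combine the partial-isometry structure of the operators $U_{il}$, established in Proposition~\ref{prop}, with the orthogonality relations of Corollary~\ref{orto}.

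First I would record the standard consequences of the fact that $P_{il}=U_{il}^*U_{il}^{}$ and $Q_{il}=U_{il}^{}U_{il}^*$ are orthogonal projections. Expanding $(U_{il}^{}-U_{il}^{}P_{il})^*(U_{il}^{}-U_{il}^{}P_{il})$ and using $P_{il}^2=P_{il}$ gives $0$, hence $U_{il}^{}=U_{il}^{}P_{il}$; arguing symmetrically with $Q_{il}$ gives $U_{il}^{}=Q_{il}U_{il}^{}$. Taking adjoints yields $U_{il}^*=P_{il}U_{il}^*$ and $U_{il}^*=U_{il}^*Q_{il}$, and the same identities hold for the pair of indices $(i,k)$.

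Now fix $i$ and let $k\neq l$. For the first relation I would write $U_{ik}^*U_{il}^{}=(U_{ik}^*Q_{ik})(Q_{il}U_{il}^{})=U_{ik}^*(Q_{ik}Q_{il})U_{il}^{}$, and the middle factor vanishes by \eqref{qik}. For the second, $U_{il}^{}U_{ik}^*=(U_{il}^{}P_{il})(P_{ik}U_{ik}^*)=U_{il}^{}(P_{il}P_{ik})U_{ik}^*$, and $P_{il}P_{ik}=(P_{ik}P_{il})^*=0$ by \eqref{pik} (both factors being self-adjoint). This proves the corollary.

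There is essentially no obstacle here: the content is entirely carried by Proposition~\ref{prop} and Corollary~\ref{orto}, and the only point needing a moment's attention is the justification of the factorizations $U_{il}^{}=Q_{il}U_{il}^{}=U_{il}^{}P_{il}$, which is just the usual characterization of partial isometries.
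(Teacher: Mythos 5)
Your proof is correct and follows essentially the same route as the paper: both arguments rest on Proposition~\ref{prop} and the orthogonality relations \eqref{pik}, \eqref{qik} of Corollary~\ref{orto}, the paper phrasing the step geometrically as $\mathrm{Im}(U_{il})\subset\mathrm{Im}(U_{ik})^{\perp}=\ker(U_{ik}^{*})$ while you encode the same fact algebraically via the partial-isometry identities $U_{il}^{}=Q_{il}U_{il}^{}=U_{il}^{}P_{il}$ and then insert the vanishing product of projections.
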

\begin{proof} Since $U_{ik}^{}U_{ik}^*$ and $U_{il}^{}U_{il}^*$ are orthogonal to each other then $\mathrm{Im}(U_{il})$ and $\mathrm{Im}(U_{ik})$ are orthogonal subspaces as well. Thus, $\mathrm{Im}(U_{il})\subset \mathrm{Im}(U_{ik})^\perp=\ker(U_{ik}^*)$, hence $U_{ik}^*U_{il}^{}=0$. By similar arguments one can show that also $U_{il}^{}U_{ik}^*=0$.
\end{proof}

\begin{pro}
\label{ret} 
For any $i,k,l$ such that $k\neq l$,
\begin{equation}
\label{uikuil}
U_{ik}U_{il}=0.
\end{equation}
\end{pro}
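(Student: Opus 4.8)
The plan is to fix $i$ and $k\neq l$, and to show that $U_{ik}U_{il}$ kills every vector in $H$. Recall $P_{ik}=U_{ik}^*U_{ik}$ is the orthogonal projection onto $K_{ik}=\mathrm{Im}(U_{ik}^*)$, so $U_{il}$ sends $H$ into $\mathrm{Im}(U_{il})=H_{il}$, and then $U_{ik}$ acts on $H_{il}$. The key point is that $U_{ik}$ annihilates $\ker(U_{ik})^{\perp}{}^{\perp}=\ker(U_{ik})$; equivalently $U_{ik}$ is nonzero only on $K_{ik}=\mathrm{Im}(P_{ik})$. So it suffices to show that $\mathrm{Im}(U_{il})\subset\ker(U_{ik})$, i.e.\ $H_{il}\perp K_{ik}$, i.e.\ $Q_{il}P_{ik}=0$. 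So the whole statement reduces to the single orthogonality relation $Q_{il}P_{ik}=0$ for $k\neq l$.

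To get $Q_{il}P_{ik}=0$ I would use relation \eqref{e1.1}. Write \eqref{e1.1} with the index choice that makes two of the four terms involve the products $U_{ik}U_{il}$ and $U_{il}U_{ik}$ — taking $j=i$ in \eqref{e1.1} gives
$$
u_{ik}u_{il}+\omega_{ii}u_{ik}u_{il}=\omega_{kl}u_{il}u_{ik}+\omega_{ii}\omega_{kl}u_{il}u_{ik},
$$
i.e.\ $2U_{ik}U_{il}=2\omega_{kl}U_{il}U_{ik}$, so $U_{ik}U_{il}=\omega_{kl}U_{il}U_{ik}$. Thus $U_{ik}U_{il}=0$ if and only if $U_{il}U_{ik}=0$, and it is enough to prove one of them. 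Now multiply $U_{ik}U_{il}=\omega_{kl}U_{il}U_{ik}$ on the left by $U_{il}^*$ and use Corollary \ref{zeros} (specifically $U_{il}^*U_{ik}=0$, valid since $k\neq l$): the right-hand side becomes $\omega_{kl}U_{il}^*U_{il}U_{ik}=\omega_{kl}P_{il}U_{ik}$, while the left-hand side is $U_{il}^*U_{ik}U_{il}=0$ by the same corollary. Hence $P_{il}U_{ik}=0$. Applying $U_{ik}^*$ on the right, or directly observing $P_{il}U_{ik}=0$ means $\mathrm{Im}(U_{ik})\subset\ker(P_{il})=K_{il}^{\perp}=H_{il}^{\perp}$ is not quite the shape I want; instead, from $P_{il}U_{ik}=0$ I take adjoints to get $U_{ik}^*P_{il}=0$, then left-multiply $U_{ik}U_{il}=\omega_{kl}U_{il}U_{ik}$ by $U_{ik}U_{ik}^*=Q_{ik}$: the right side is $\omega_{kl}Q_{ik}U_{il}U_{ik}$; but $Q_{ik}U_{il}=U_{ik}U_{ik}^*U_{il}=0$ by \eqref{uiluikg}, so $Q_{ik}U_{ik}U_{il}=0$, and since $Q_{ik}U_{ik}=U_{ik}$ this says $U_{ik}U_{il}=0$, as desired.

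The main obstacle is simply getting the index bookkeeping in \eqref{e1.1} right and then choosing the correct sequence of left/right multiplications by $U_{i\bullet}^{(*)}$ so that the already-established vanishing relations \eqref{e1.4a}, \eqref{e1.4b} and Corollary \ref{zeros} collapse all but the term one wants. There is no deep difficulty: once $U_{ik}U_{il}=\omega_{kl}U_{il}U_{ik}$ is in hand, the argument is a short chain of cancellations. I would present it as: (i) derive the commutation relation from \eqref{e1.1} with $j=i$; (ii) left-multiply by $Q_{ik}=U_{ik}U_{ik}^*$ and invoke \eqref{uiluikg} to kill the right-hand side; (iii) use $Q_{ik}U_{ik}=U_{ik}$ to conclude $U_{ik}U_{il}=0$.
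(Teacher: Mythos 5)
Your proof is correct, and it starts exactly as the paper does: putting $j=i$ in \eqref{e1.1} (with $\om_{ii}=1$ by skew-symmetry of $\te$) to obtain $U_{ik}U_{il}=\om_{kl}U_{il}U_{ik}$. Where you genuinely differ is the finishing step. The paper argues spatially: both sides of the commutation relation are the same operator, whose kernel therefore contains $\ker(U_{il})=K_{il}^\perp$ and $\ker(U_{ik})=K_{ik}^\perp$; since $P_{ik}P_{il}=0$ (Corollary \ref{orto}) forces $K_{ik}\cap K_{il}=\{0\}$, Lemma \ref{lem2} gives $K_{ik}^\perp+K_{il}^\perp=H$, so the operator is zero. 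You instead cancel algebraically: left-multiply by $Q_{ik}$, kill the right-hand side using $U_{ik}^*U_{il}=0$, and use the partial-isometry identity $U_{ik}U_{ik}^*U_{ik}=U_{ik}$ (legitimate, since Proposition \ref{prop} makes $U_{ik}$ a partial isometry) to recover $U_{ik}U_{il}$ on the left. Both finishes rest on orthogonality facts derived from unitarity, so the proofs are close relatives; yours trades the kernel-and-complement computation for a one-line operator cancellation, which is arguably tidier here, while the paper's kernel-sum technique is the template it reuses verbatim for the harder Proposition \ref{pro1.11}, where a purely algebraic cancellation is less immediate. Three small remarks: the relation you need in step (ii) is $U_{ik}^*U_{il}=0$, the first displayed relation of Corollary \ref{zeros}, not \eqref{uiluikg} (which is $U_{il}U_{ik}^*=0$) — a harmless mis-citation since both are in that corollary; your middle detour already finishes the proof, because $P_{il}U_{ik}=0$ gives $\mathrm{Im}(U_{ik})\subset\ker(P_{il})=K_{il}^\perp=\ker(U_{il})$, hence $U_{il}U_{ik}=0$ and then $U_{ik}U_{il}=\om_{kl}U_{il}U_{ik}=0$, so the pivot to the $Q_{ik}$ argument was unnecessary; and the opening reduction to $Q_{il}P_{ik}=0$ is never used and could safely be cut.
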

\begin{proof} 
For $i=j$ condition (\ref{e1.1}) reduces to 
\begin{equation}
\label{eqnow}
U_{ik}U_{il}=\omega_{kl}U_{il}U_{ik}.
\end{equation}
Observe, that $\ker(U_{ik}U_{il})\supset\ker(U_{il})=\mathrm{Im}(U_{il}^*)^\perp=K_{il}^\perp$. Similarly, $\ker(\omega_{kl}U_{il}U_{ik})\supset K_{ik}^\perp$. As the equality \eqref{eqnow} holds, $\ker(U_{ik}U_{il})\subset K_{il}^\perp+K_{ik}^\perp$. According to \eqref{pik} in Corollary \ref{orto} and Lemma \ref{lem2}, $K_{il}^\perp+K_{ik}^\perp=H$. Therefore, $U_{ik}U_{il}=0$.
\end{proof}

\begin{pro}
\label{norm}
Operators $U_{ik}$ are normal, i.e. $U_{ik}^{}U_{ik}^*=U_{ik}^*U_{ik}^{}$ for all $i,k=1,\ldots,n$.
\end{pro}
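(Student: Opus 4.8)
The plan is to show that the two orthogonal projections $P_{ik}=U_{ik}^*U_{ik}^{}$ and $Q_{ik}=U_{ik}^{}U_{ik}^*$ (projections by Proposition \ref{prop}) coincide; since by definition $P_{ik}=U_{ik}^*U_{ik}^{}$ and $Q_{ik}=U_{ik}^{}U_{ik}^*$, the equality $P_{ik}=Q_{ik}$ is precisely the asserted normality $U_{ik}^{}U_{ik}^*=U_{ik}^*U_{ik}^{}$.

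First I would fix $i$ and record that each of the families $\{P_{ik}\}_{k=1}^n$ and $\{Q_{ik}\}_{k=1}^n$ is a resolution of the identity. Their members are pairwise orthogonal by \eqref{pik} and \eqref{qik} of Corollary \ref{orto}. Moreover $\sum_{k=1}^n P_{ik}=\jed$ because $U^T$ is unitary (Remark \ref{uT}), exactly as used in the proof of Corollary \ref{orto}, and $\sum_{k=1}^n Q_{ik}=\jed$ because $UU^*=\jed_{M_n(B(H))}$ and $\sum_{k}Q_{ik}$ is its $(i,i)$ entry.

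The key second step is to extract a ``cross orthogonality'' between the two families from Proposition \ref{ret}: for $k\neq l$ we have $U_{ik}^{}U_{il}^{}=0$, whence
\[
P_{il}Q_{ik}=U_{il}^*\,(U_{il}^{}U_{ik}^{})\,U_{ik}^*=0,\qquad
P_{ik}Q_{il}=U_{ik}^*\,(U_{ik}^{}U_{il}^{})\,U_{il}^*=0 .
\]
After that the conclusion is immediate. Using $\sum_{l}Q_{il}=\jed$ and $P_{ik}Q_{il}=0$ for $l\neq k$ one gets $P_{ik}=P_{ik}\big(\sum_{l}Q_{il}\big)=P_{ik}Q_{ik}$, while using $\sum_{l}P_{il}=\jed$ and $P_{il}Q_{ik}=0$ for $l\neq k$ one gets $Q_{ik}=\big(\sum_{l}P_{il}\big)Q_{ik}=P_{ik}Q_{ik}$; hence $P_{ik}=Q_{ik}$ for all $i,k$.

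I do not foresee a genuine obstacle. The only slightly delicate point is the second step — recognising that Proposition \ref{ret} is exactly what forces the two resolutions of the identity to interlock — and, to a lesser extent, keeping straight which of the unitarity relations for $U$, $U^*$ and $U^T$ supplies each of the two identities $\sum_k P_{ik}=\jed$ and $\sum_k Q_{ik}=\jed$.
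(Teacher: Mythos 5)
Your proof is correct and follows essentially the same route as the paper: the key input in both is Proposition \ref{ret}, which gives the cross-orthogonality $P_{ik}Q_{il}=0$ for $k\neq l$, combined with the two resolutions of the identity $\sum_k P_{ik}=\jed$ and $\sum_k Q_{ik}=\jed$ coming from unitarity of $U^T$ and $U$. The paper phrases the final step geometrically (the ranges satisfy $H_{il}\subset K_{ik}^\perp$ and $H=\bigoplus_k H_{ik}=\bigoplus_k K_{ik}$, forcing $H_{ik}=K_{ik}$), whereas you multiply by the resolutions of the identity to get $P_{ik}=P_{ik}Q_{ik}=Q_{ik}$; these are trivially equivalent formulations of the same argument.
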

\begin{proof}
It follows from Proposition \ref{ret} that 
$U_{ik}^*U_{ik}^{}U_{il}^{}U_{il}^*=0$ 
for $i,k,l$ such that $k\neq l$.
Thus $H_{il}^{}\subset K_{ik}^\perp$ for $k\neq l$. Since 
$H=\bigoplus_{k} H_{ik}=\bigoplus_k K_{ik}$ for every $i$, we get $H_{ik}=K_{ik}$ for every $i,k$. The latter  is equivalent to $P_{ik}=Q_{ik}$. 
\end{proof}

\begin{cor} 
For $i,j,k$ such that $i\neq j$,
\begin{equation}
\label{uij}
U_{ik}U_{jk}=0.
\end{equation}
\end{cor}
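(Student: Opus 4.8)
The plan is to mirror, with the roles of rows and columns of $U$ interchanged, the argument that produced \eqref{uikuil} in Proposition \ref{ret}. First I would specialize relation \eqref{e1.1} to the case $k=l$, which (after dividing by the common scalar and relabelling) collapses the four-term identity to
\beq
\label{eqnowcol}
U_{ik}U_{jk}=\omega_{ji}U_{jk}U_{ik},\qquad i\neq j.
\eeq
This is the exact column analog of \eqref{eqnow}.

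Next I would locate the kernels of the two sides. Since $U_{jk}$ is a partial isometry (Proposition \ref{prop}) with $\ker U_{jk}=\mathrm{Im}(U_{jk}^*)^\perp=K_{jk}^\perp$, we get $\ker(U_{ik}U_{jk})\supset K_{jk}^\perp$; likewise $\ker(\omega_{ji}U_{jk}U_{ik})\supset K_{ik}^\perp$. Taking sums and using \eqref{eqnowcol} together with Lemma \ref{lem2}(2) gives $\ker(U_{ik}U_{jk})\subset K_{ik}^\perp+K_{jk}^\perp=(K_{ik}\cap K_{jk})^\perp$. By the orthogonality relation \eqref{pijk} in Corollary \ref{orto} the projections $P_{ik}$ and $P_{jk}$ are orthogonal, so $K_{ik}\cap K_{jk}=\{0\}$ and hence $K_{ik}^\perp+K_{jk}^\perp=H$. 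Therefore $\ker(U_{ik}U_{jk})=H$, i.e. \eqref{uij} holds. (One can alternatively phrase this without kernels: on $K_{jk}^\perp$ both sides of \eqref{eqnowcol} vanish trivially, and on $K_{ik}^\perp$ the right-hand side vanishes so the left-hand side does too; since $H=K_{ik}^\perp+K_{jk}^\perp$ the two sides agree and equal $0$ everywhere. I would probably present it in the kernel form to match Proposition \ref{ret}.)

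I do not expect any real obstacle here: every ingredient is already in place — the reduced relation comes straight from \eqref{e1.1} with $k=l$, the partial-isometry property is Proposition \ref{prop}, the decomposition lemma is Lemma \ref{lem2}, and the crucial orthogonality $P_{ik}\perp P_{jk}$ is \eqref{pijk}. The only point requiring a moment's care is making sure one invokes \eqref{pijk} (orthogonality in the row index) rather than \eqref{pik} (orthogonality in the column index) used in Proposition \ref{ret}; this is precisely why the corollary is stated separately after Proposition \ref{norm} had to be passed through to ensure all the $P$'s and $Q$'s behave. Accordingly I would keep the proof to three or four lines, explicitly citing \eqref{e1.1}, Proposition \ref{prop}, \eqref{pijk}, and Lemma \ref{lem2}.
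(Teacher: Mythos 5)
Your proposal has a genuine gap at its very first step. Setting $k=l$ in \eqref{e1.1} does \emph{not} produce a column analogue of \eqref{eqnow}: since $\theta$ is skew-symmetric, $\omega_{kk}=1$, and with $l=k$ both sides of \eqref{e1.1} become literally the same expression $u_{ik}u_{jk}+\omega_{ji}u_{jk}u_{ik}$, so the specialization is a tautology and carries no information. The reason the specialization $i=j$ works in Proposition \ref{ret} is that there the two terms on each side merge into a single product, giving $2u_{ik}u_{il}=2\omega_{kl}u_{il}u_{ik}$; with $k=l$ the two products $u_{ik}u_{jk}$ and $u_{jk}u_{ik}$ stay distinct and appear with identical coefficients on both sides. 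The commutation relation $U_{ik}U_{jk}=\omega_{ji}U_{jk}U_{ik}$ you want is simply not a consequence of \eqref{e1.1}--\eqref{e1.4b} at this stage (it becomes true only vacuously, once both sides are known to be $0$, i.e. once the corollary itself is proved). Without it, your kernel argument only yields $\ker(U_{ik}U_{jk})\supset K_{jk}^\perp$, and nothing forces the product to vanish on $K_{jk}$; the step "on $K_{ik}^\perp$ the right-hand side vanishes so the left-hand side does too" has no right-hand side to appeal to.

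The missing ingredient is exactly Proposition \ref{norm}, which you mention in passing but never actually use. The relations available before normality give only $K_{ik}\perp K_{jk}$ (from \eqref{pijk}) and $H_{ik}\perp H_{jk}$ (from \eqref{e1.4b}), and these say nothing about the position of $\mathrm{Im}(U_{jk})=H_{jk}$ relative to $\ker(U_{ik})=K_{ik}^\perp$. Normality identifies $H_{jk}$ with $K_{jk}$ (equivalently $Q_{jk}=P_{jk}$), and then \eqref{pijk} gives $P_{ik}Q_{jk}=0$, i.e. $\mathrm{Im}(U_{jk})\subset\mathrm{Im}(U_{ik}^*)^\perp=\ker(U_{ik})$, which is the paper's one-line proof of \eqref{uij}. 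If you prefer your kernel-sum formulation, it can be salvaged by replacing the bogus commutation relation with the equality $H_{jk}=K_{jk}$ from Proposition \ref{norm}; but as written the argument does not go through.
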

\begin{proof} 
It follows from Corollary \ref{orto} and Proposition \ref{norm} that $P_{ik}Q_{jk}=P_{ik}P_{jk}=0$. Thus $\mathrm{Im}(U_{jk})\subset\mathrm{Im}(U_{ik}^*)^\perp=\ker(U_{ik})$, and \eqref{uij} follows.
\end{proof}

\begin{pro}
\label{pro1.11} 
For $i,j,k,l$ such that $i\neq j$ and $k\neq l$,
\begin{equation}
\label{e1.51}
U_{ik}U_{jl}=\omega_{ji}\omega_{kl}U_{jl}U_{ik}.
\end{equation}
\end{pro}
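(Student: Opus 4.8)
The plan is to sandwich relation \eqref{e1.1} between suitable factors so that only the two monomials $U_{ik}U_{jl}$ and $U_{jl}U_{ik}$ survive, and then to remove the spurious projection left over by a range argument, using that the $U_{ik}$ are normal partial isometries.

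First I would multiply \eqref{e1.1} on the left by $P_{ik}$. By normality (Proposition \ref{norm}, so $P_{ik}=Q_{ik}=U_{ik}U_{ik}^*$), together with $U_{ik}^*U_{jk}=0$ from \eqref{e1.4b} and $U_{ik}^*U_{il}=0$ from Corollary \ref{zeros}, the terms $\omega_{ji}U_{jk}U_{il}$ and $\omega_{kl}U_{il}U_{jk}$ are annihilated, while $P_{ik}U_{ik}=Q_{ik}U_{ik}=U_{ik}$; this yields
\[
U_{ik}U_{jl}=\omega_{ji}\omega_{kl}\,P_{ik}U_{jl}U_{ik}.
\]
Next I would multiply \eqref{e1.1} on the right by $U_{ik}$. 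Now $U_{il}U_{ik}=0$ (Proposition \ref{ret}) and $U_{jk}U_{ik}=0$ from \eqref{uij} kill the same two cross terms, leaving
\[
U_{ik}U_{jl}U_{ik}=\omega_{ji}\omega_{kl}\,U_{jl}U_{ik}U_{ik}.
\]

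Multiplying the first identity on the right by $U_{ik}$ and comparing it with the second gives $P_{ik}U_{jl}U_{ik}U_{ik}=U_{jl}U_{ik}U_{ik}$, so the bounded operator $T:=(\jed-P_{ik})U_{jl}U_{ik}$ satisfies $TU_{ik}=0$, i.e.\ $T$ vanishes on $\mathrm{Im}(U_{ik})$. Since $\jed-P_{ik}$ is the orthogonal projection onto $\ker(U_{ik})$, the operator $T$ also vanishes on $\ker(U_{ik})$. As normality gives $\mathrm{Im}(P_{ik})=\mathrm{Im}(Q_{ik})=\mathrm{Im}(U_{ik})$, every $x$ decomposes as $x=P_{ik}x+(\jed-P_{ik})x$ with $P_{ik}x\in\mathrm{Im}(U_{ik})$ and $(\jed-P_{ik})x\in\ker(U_{ik})$, both annihilated by $T$; hence $T=0$, that is $P_{ik}U_{jl}U_{ik}=U_{jl}U_{ik}$. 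Substituting this back into the first identity gives precisely \eqref{e1.51}.

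The hard part is this last step. One cannot hope to prove $P_{i'k}U_{jl}U_{ik}=0$ for $i'\neq i$ by a direct expansion of \eqref{e1.1}, since for $i'\notin\{i,j\}$ that reduces again to a relation of exactly the type \eqref{e1.51} that is being established; the circularity is broken only by routing the argument through the auxiliary identity $U_{ik}U_{jl}U_{ik}=\omega_{ji}\omega_{kl}U_{jl}U_{ik}U_{ik}$, which involves only the already-proven vanishing relations, and then exploiting that $U_{ik}$ is a normal partial isometry so that $\mathrm{Im}(U_{ik})$ and $\ker(U_{ik})$ together span $H$.
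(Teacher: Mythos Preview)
Your argument is correct. The two identities
\[
U_{ik}U_{jl}=\omega_{ji}\omega_{kl}\,P_{ik}U_{jl}U_{ik}
\qquad\text{and}\qquad
U_{ik}U_{jl}U_{ik}=\omega_{ji}\omega_{kl}\,U_{jl}U_{ik}U_{ik}
\]
follow exactly as you say from \eqref{e1.4b}, Corollary \ref{zeros}, Proposition \ref{ret} and \eqref{uij}, and the closing step is sound: $T=(\jed-P_{ik})U_{jl}U_{ik}$ kills $\ker(U_{ik})$ because $U_{ik}$ sits on the right, and kills $\mathrm{Im}(U_{ik})$ because $TU_{ik}=0$; normality (Proposition \ref{norm}) then gives $H=\mathrm{Im}(U_{ik})\oplus\ker(U_{ik})$, so $T=0$. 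One cosmetic remark: the clause ``Since $\jed-P_{ik}$ is the orthogonal projection onto $\ker(U_{ik})$'' is not actually the reason $T$ vanishes on $\ker(U_{ik})$; that vanishing is simply because $U_{ik}$ is the rightmost factor of $T$.

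The paper takes a different, more symmetric route. It rewrites \eqref{e1.1} as $L=R$ with $L=U_{ik}U_{jl}-\omega_{ji}\omega_{kl}U_{jl}U_{ik}$ and $R=\omega_{kl}U_{il}U_{jk}-\omega_{ji}U_{jk}U_{il}$, observes that $\ker(L)\supset(K_{ik}+K_{jl})^\perp$ and $\ker(R)\supset(K_{il}+K_{jk})^\perp$, and then uses Corollary \ref{orto} to see that $K_{ik}+K_{jl}$ and $K_{il}+K_{jk}$ are mutually orthogonal, so by Lemma \ref{lem2} the two orthogonal complements sum to $H$ and $L=0$. Thus the paper gets away with a single kernel computation and Lemma \ref{lem2}, never invoking normality, Proposition \ref{ret}, or \eqref{uij}; your proof instead trades that spatial argument for algebraic cancellations that rely on those later vanishing results and on normality. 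Both are short; the paper's version uses less machinery, while yours has the virtue of isolating the obstruction $T$ explicitly and showing why it must vanish.
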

\begin{proof} 
Let $i\neq j$ and $k\neq l$. Condition \eqref{e1.1} is equivalent to
$$
U_{ik}U_{jl}-\omega_{ji}\omega_{kl}U_{jl}U_{ik}= \omega_{kl}U_{il}U_{jk}-\omega_{ji}U_{jk}U_{il}. 
$$
Let $L$ and $R$ denote respectively the left and the right hand side of the above equality. Observe that 
$$
\ker (L)\supset\ker(U_{ik})\cap\ker(U_{jl})=\mathrm{Im}(U_{ik}^*)^\perp\cap\mathrm{Im}(U_{jl}^*)^\perp=K_{ik}^\perp\cap K_{jl}^\perp=
(K_{ik}+K_{jl})^\perp.
$$
Similarly, $\ker (R)\supset (K_{il}+K_{jk})^\perp$. Notice that due to Corollary \ref{orto} subspaces $K_{ik}+K_{jl}$ and $K_{il}+K_{jk}$ are orthogonal to each other. Since $L=R$, we arrive at
$$
\ker(L)\supset  (K_{ik}+K_{jl})^\perp+(K_{il}+K_{jk})^\perp= H. $$
The last equality is due to Lemma \ref{lem2}. Therefore, $L=0$.
\end{proof}

\begin{pro} 
\label{uug}
For $i,j,k,l$ such that $i\neq j$ and $k\neq l$,
\begin{equation}
\label{glsp}
U_{ik}^*U_{jl}=\omega_{ij}\omega_{lk}U_{jl}U_{ik}^*.
\end{equation}
\end{pro}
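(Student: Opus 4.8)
The plan is to obtain \eqref{glsp} from the commutation relation \eqref{e1.51} of Proposition \ref{pro1.11} together with the normality of the operators $U_{ik}$ proved in Proposition \ref{norm}. The quickest route is via the Fuglede--Putnam theorem. Fix $i\neq j$ and $k\neq l$ and read \eqref{e1.51} as an identity $MT=TN$ with $M=U_{ik}$, $T=U_{jl}$, and $N=\omega_{ji}\omega_{kl}\,U_{ik}$: here $MT=U_{ik}U_{jl}$ and $TN=\omega_{ji}\omega_{kl}U_{jl}U_{ik}$, which agree precisely by \eqref{e1.51}. Both $M$ and $N$ are normal operators on $H$ (the latter being a unimodular scalar multiple of the normal operator $U_{ik}$), so Fuglede--Putnam yields $M^*T=TN^*$, that is, $U_{ik}^*U_{jl}=\overline{\omega_{ji}\omega_{kl}}\,U_{jl}U_{ik}^*$. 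Since the matrix $\theta$ is skew-symmetric, $\overline{\omega_{ji}}=\omega_{ij}$ and $\overline{\omega_{kl}}=\omega_{lk}$, and \eqref{glsp} follows.

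If one prefers to stay within the elementary toolkit used above and not invoke Fuglede--Putnam, the key auxiliary fact is that $U_{jl}$ commutes with the projection $P_{ik}=U_{ik}^*U_{ik}$ when $i\neq j$ and $k\neq l$. Recall (as in the proof of Proposition \ref{norm}, via unitarity of $U^T$) that for a fixed $i$ the projections $P_{i1},\dots,P_{in}$ are pairwise orthogonal and add up to $\jed$; hence it is enough to verify $P_{im}U_{jl}P_{im'}=0$ for $m\neq m'$. For $m=l$ this is immediate from $U_{il}U_{jl}=0$ (relation \eqref{uij}); for $m'=l$ it follows from $U_{jl}U_{il}^*=0$, which is \eqref{e1.4a}; and for $m,m'\neq l$ one first uses \eqref{e1.51} to rewrite $U_{im}U_{jl}$ as a scalar times $U_{jl}U_{im}$ and then kills the resulting factor $U_{im}U_{im'}^*$ by \eqref{uiluikg}. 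Consequently $U_{jl}=\sum_m P_{im}U_{jl}P_{im}$, so $P_{ik}U_{jl}=U_{jl}P_{ik}$.

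With this in hand one finishes by conjugating \eqref{e1.51}. Left multiplication by $U_{ik}^*$ gives $P_{ik}U_{jl}=\omega_{ji}\omega_{kl}\,U_{ik}^*U_{jl}U_{ik}$, hence $U_{ik}^*U_{jl}U_{ik}=\overline{\omega_{ji}\omega_{kl}}\,U_{jl}P_{ik}$ after using the commutation just established; right multiplication by $U_{ik}^*$, together with the normality identity $U_{ik}U_{ik}^*=P_{ik}$ and the relations $U_{ik}^*P_{ik}=U_{ik}^*=P_{ik}U_{ik}^*$, collapses the left side to $U_{ik}^*U_{jl}$ and the right side to $\overline{\omega_{ji}\omega_{kl}}\,U_{jl}U_{ik}^*$, which is again \eqref{glsp}. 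I expect the main obstacle to be the first route's reliance on Fuglede--Putnam: unlike in Propositions \ref{ret} and \ref{pro1.11}, there is no relation expressing $U_{ik}^*U_{jl}-\omega_{ij}\omega_{lk}U_{jl}U_{ik}^*$ as a sum of terms with mutually orthogonal ranges, so the ``disjoint kernels'' trick is unavailable; in the second route the corresponding obstacle is the case-by-case verification that $U_{jl}$ is block-diagonal with respect to the decomposition $H=\bigoplus_m \mathrm{Im}(U_{im})$.
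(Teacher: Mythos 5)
Your argument is correct, and both of your routes differ from the paper's own proof. The paper argues spatially: using normality (Proposition \ref{norm}) it observes that $U_{ik}$, $U_{ik}^*$ are isomorphisms on $H_{ik}$ and vanish on $H_{ik}^\perp$, shows that $H_{ik}\cap H_{jl}$ is invariant, verifies \eqref{glsp} on that intersection by multiplying \eqref{e1.51} by $U_{ik}^*$ on both sides, and then checks that both sides of \eqref{glsp} kill $(H_{ik}\cap H_{jl})^\perp$ by decomposing this complement as $\sum_{r\neq i}H_{rk}+\sum_{s\neq j}H_{sl}$ via Lemma \ref{lem2} and the orthogonality relations. Your first route compresses all of this into one application of the Fuglede--Putnam theorem to $M=U_{ik}$, $N=\omega_{ji}\omega_{kl}U_{ik}$, $T=U_{jl}$, which is legitimate (both $M$ and $N$ are normal by Proposition \ref{norm}, and $MT=TN$ is exactly \eqref{e1.51}); it is by far the shortest proof, at the cost of importing a nontrivial external theorem where the paper stays self-contained. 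Your second route is an algebraic version of the paper's geometry: instead of working with vectors in $H_{ik}\cap H_{jl}$ and its complement, you prove directly that $U_{jl}$ is block-diagonal for the resolution $\jed=\sum_m P_{im}$, i.e.\ $[U_{jl},P_{ik}]=0$, and then conjugate \eqref{e1.51}. Note that this commutation is precisely \eqref{komup} of Corollary \ref{kom}, which the paper deduces \emph{from} Proposition \ref{uug}; since you establish it independently from \eqref{uij}, \eqref{e1.4a}, \eqref{uiluikg} and \eqref{e1.51}, there is no circularity, and as a bonus your argument delivers that part of Corollary \ref{kom} ahead of time. All three proofs consume the same inputs (Propositions \ref{norm} and \ref{pro1.11} plus the orthogonality relations), so the choice is a matter of taste: maximal brevity (Fuglede--Putnam), algebraic bookkeeping with projections (your second route), or the paper's elementary vector-level computation.
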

\begin{proof} Since $U_{ik}U_{ik}^*=U_{ik}^*U_{ik}$ (cf Proposition \ref{norm}), operators $U_{ik},U_{ik}^*$ restricted to $H_{ik}$ are isomorphisms and they are zero on the orthogonal complement of the subspace $H_{ik}$ for every $i,k$. Now, let $i\neq j$ and $k\neq l$. Since $U_{jl}(H_{jl})=H_{jl}$, due to Proposition \ref{pro1.11} we have
$U_{ik}(H_{jl})=U_{ik}U_{jl}(H_{jl})=U_{jl}U_{ik}(H_{jl})\subset H_{jl}$. Therefore, $U_{ik}(H_{ik}\cap H_{jl})\subset H_{ik}\cap H_{jl}$ and $U_{jl}(H_{ik}\cap H_{jl})\subset H_{ik}\cap H_{jl}$.  
Similarly, one can show that $U_{ik}^*(H_{jl})\subset H_{jl}$, and $U_{ik}^*(H_{ik}\cap H_{jl})\subset H_{ik}\cap H_{jl}$ and $U_{jl}^*(H_{ik}\cap H_{jl})\subset H_{ik}\cap H_{jl}$.
If $x\in H_{ik}\cap H_{jl}$, then due to \eqref{e1.51} we have 
$$
U_{ik}U_{jl}U_{ik}^*(x)=\omega_{ji}\omega_{kl}U_{jl}U_{ik}U_{ik}^*(x)=\omega_{ji}\omega_{kl}U_{jl}(x)
$$
and consequently
\begin{equation}
\label{ew}
U_{jl}U_{ik}^*(x)=U_{ik}^*U_{ik}U_{jl}U_{ik}^*(x)=\omega_{ji}\omega_{kl}U_{ik}^*U_{jl}(x), \qquad x\in H_{ik}\cap H_{jl}.
\end{equation}
On the other hand, if $x\in(H_{ik}\cap H_{jl})^\perp$, then by Lemma \ref{lem2}, 
$x=\sum_{r\neq i}x_{rk} +\sum_{s\neq j}x_{sl}$ where $x_{rk}\in H_{rk}$ and $x_{sl}\in H_{sl}$. Thus 
$$ 
U_{jl}U_{ik}^*(x)=\sum_{r\neq i}U_{jl}U_{ik}^*(x_{rk})+\sum_{s\neq j}U_{jl}U_{ik}^*(x_{sl}).  
$$ 
Observe that $U_{ik}^*(x_{rk})=0$ for $r\neq i$ (cf Corollary \ref{orto} and Proposition \ref{norm}). Further, if $s\neq j$, then $U_{ik}^*(x_{sl})\in H_{sl}$ for $s\neq i$ and $U_{ik}^*(x_{sl})=0$ for $s=i$. Hence $U_{jl}U_{ik}^*(x_{sl})=0$ (again Corollary \ref{orto}). Thus we came to equality $U_{jl}U_{ik}^*(x)=0$ for $x\in (H_{ik}\cap H_{jl})^\perp$. In a similar way we show that $U_{ik}^*U_{jl}(x)=0$ for $x\in  (H_{ik}\cap H_{jl})^\perp$, so we get 
\beq
\label{eo}
U_{jl}U_{ik}^*(x)=0=\om_{ji}\om_{kl}U_{ik}^*U_{jl}(x), \qquad x\in(H_{ik}\cap H_{jl})^\perp.
\eeq
Taking into account \eqref{ew} and \eqref{eo} we get \eqref{glsp}.
\end{proof} 

\begin{cor}
\label{kom}
For every $i,j,k,l$,
\be
[U_{ik},P_{jl}] & = & 0, \label{komup} \\
{[P_{ik},P_{jl}]} & = & 0. \label{kompp}
\ee
\end{cor}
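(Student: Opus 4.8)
The plan is to establish \eqref{komup} for all quadruples $i,j,k,l$ first, and then to deduce \eqref{kompp} from it by a one-line adjoint argument. The tools I expect to use are all already available: by Proposition \ref{norm} the $U_{ik}$ are normal, so $P_{ik}=U_{ik}^{}U_{ik}^*=U_{ik}^*U_{ik}^{}$, and since the $U_{ik}$ are partial isometries (Proposition \ref{prop}) this yields the support identities $U_{ik}P_{ik}=P_{ik}U_{ik}=U_{ik}$ and $U_{ik}^*P_{ik}=P_{ik}U_{ik}^*=U_{ik}^*$; besides these I will need the orthogonality relations of Corollary \ref{orto}, the vanishing relations \eqref{e1.4a} and \eqref{uiluikg}, and the twisted commutation relations \eqref{e1.51} and \eqref{glsp}.

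I would prove \eqref{komup} by distinguishing cases according to whether $i=j$ and whether $k=l$. If $(i,k)=(j,l)$, then $U_{ik}P_{ik}=P_{ik}U_{ik}$ is immediate from normality. If $i=j$ and $k\neq l$, I expect both products $U_{ik}P_{il}$ and $P_{il}U_{ik}$ to vanish: $U_{ik}P_{il}=U_{ik}U_{il}^*U_{il}=0$ by (the $k\leftrightarrow l$ form of) \eqref{uiluikg}, and $P_{il}U_{ik}=P_{il}P_{ik}U_{ik}=0$ by \eqref{pik} (and self-adjointness of the $P$'s). The case $i\neq j$, $k=l$ is symmetric: $U_{ik}P_{jk}=U_{ik}U_{jk}^*U_{jk}=0$ because $U_{ik}U_{jk}^*=0$ is the adjoint of \eqref{e1.4a}, and $P_{jk}U_{ik}=P_{jk}P_{ik}U_{ik}=0$ by \eqref{pijk}.

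The remaining case $i\neq j$, $k\neq l$ is the one requiring actual computation. First I would take the adjoint of \eqref{glsp} and use $\overline{\om_{ij}}=\om_{ji}$, $\overline{\om_{lk}}=\om_{kl}$ to rewrite it as $U_{ik}^{}U_{jl}^*=\om_{ij}\om_{lk}\,U_{jl}^*U_{ik}^{}$. Combining this with \eqref{e1.51} gives
$$
U_{ik}^{}P_{jl}=U_{ik}^{}U_{jl}^*U_{jl}^{}=\om_{ij}\om_{lk}\,U_{jl}^*U_{ik}^{}U_{jl}^{}=\om_{ij}\om_{lk}\om_{ji}\om_{kl}\,U_{jl}^*U_{jl}^{}U_{ik}^{}=P_{jl}U_{ik}^{},
$$
the last equality using $\om_{ij}\om_{ji}=\om_{kl}\om_{lk}=1$, which holds since $\te$ is skew-symmetric. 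Once \eqref{komup} is known for all indices, taking adjoints (and $P_{jl}^*=P_{jl}$) yields $[U_{ik}^*,P_{jl}]=0$ as well, whence $P_{ik}P_{jl}=U_{ik}^*U_{ik}^{}P_{jl}=U_{ik}^*P_{jl}U_{ik}^{}=P_{jl}U_{ik}^*U_{ik}^{}=P_{jl}P_{ik}$, which is \eqref{kompp}. I anticipate the only delicate point to be the phase bookkeeping in the displayed line for the generic case: one must apply the adjoint of \eqref{glsp} and then \eqref{e1.51} in exactly this order so that the four $\om$'s pair up into $1$.
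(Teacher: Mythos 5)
Your proposal is correct and follows essentially the same route as the paper: the degenerate cases $i=j$ or $k=l$ are disposed of by the vanishing/orthogonality relations, and in the generic case $i\neq j$, $k\neq l$ you perform exactly the paper's computation $U_{ik}P_{jl}=U_{ik}U_{jl}^*U_{jl}=\om_{ij}\om_{lk}U_{jl}^*U_{ik}U_{jl}=\om_{ij}\om_{lk}\om_{ji}\om_{kl}U_{jl}^*U_{jl}U_{ik}=P_{jl}U_{ik}$, with the same phase cancellation. The only differences are cosmetic: you invoke \eqref{pik}, \eqref{pijk} together with the support identities where the paper cites \eqref{uiluikg}, \eqref{uikuil}, \eqref{e1.4b}, \eqref{uij} directly, and you spell out the adjoint argument for \eqref{kompp} that the paper leaves implicit.
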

\begin{proof}
If $i=j$ or $k=l$, then from \eqref{uiluikg}, \eqref{uikuil}  or \eqref{e1.4b}, \eqref{uij} respectively we get $U_{ik}P_{jl}=0=P_{jl}U_{ik}$. Otherwise,
$$U_{ik}P_{jl}=U_{ik}U_{jl}^*U_{jl}=\om_{ij}\om_{lk}U_{jl}^*U_{ik}U_{jl}= \om_{ij}\om_{lk}\om_{ji}\om_{kl}U_{jl}^*U_{jl}U_{ik}=P_{jl}U_{ik},$$
so \eqref{komup} is satisfied. Relation \eqref{kompp} is a consequence of \eqref{komup}.
\end{proof}

For every permutation $\sigma\in S_n$ we define a projection $P_\s$ by
\begin{equation}
\label{psigma}
P_{\sigma}=\prod_{i=1}^n P_{i,\sigma^{-1}(i)}=P_{1,\s^{-1}(1)}P_{2,\s^{-1}(2)}\ldots P_{n,\sigma^{-1}(n)}.
\end{equation}
The projections satisfy $P_\sigma P_{\tau}=\delta_{\sigma,\tau}P_\s$ for $\sigma, \tau\in S_n$, because if $\sigma\neq \tau$ then there is 
$i$ such that $\si(i)\neq\ti(i)$ and  $P_{i,\si(i)}P_{i,\ti(i)}=0$. Moreover, we have
\begin{pro}
\label{cent}
The projections $P_\s$, $\s\in S_n$, are central and they form a resolution of the identity, i.e.
$ 
\sum_{\sigma\in S_n} P_{\sigma}=\mathds{1}.
$ 
\end{pro}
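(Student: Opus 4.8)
The plan is to handle the two assertions separately, reducing each to the commutation and orthogonality relations for the projections $P_{ik}$ collected in Corollaries \ref{orto} and \ref{kom}.

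\emph{Centrality.} By \eqref{kompp} the projections $P_{1,\sigma^{-1}(1)},\ldots,P_{n,\sigma^{-1}(n)}$ entering the definition \eqref{psigma} of $P_\sigma$ pairwise commute, so their product $P_\sigma$ is again a self-adjoint projection. Since, by \eqref{komup}, each factor $P_{i,\sigma^{-1}(i)}$ commutes with every $U_{jk}$, pushing $U_{jk}$ through the product one factor at a time gives $U_{jk}P_\sigma=P_\sigma U_{jk}$ for all $j,k$; taking adjoints and using $P_\sigma^*=P_\sigma$ yields $U_{jk}^*P_\sigma=P_\sigma U_{jk}^*$ as well. As the $U_{jk}$ together with their adjoints generate the C*-algebra, $P_\sigma$ lies in its centre.

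\emph{Resolution of the identity.} The starting point is the row relation $\sum_{k=1}^n P_{ik}=\mathds{1}$, valid for each fixed $i$ (this is noted inside the proof of Corollary \ref{orto}, using that $U^T$ is unitary by Remark \ref{uT}). Multiplying these $n$ identities and expanding the product yields
\[\mathds{1}=\prod_{i=1}^n\Big(\sum_{k=1}^n P_{ik}\Big)=\sum_{f}P_{1,f(1)}P_{2,f(2)}\cdots P_{n,f(n)},\]
the sum running over all maps $f\colon\{1,\ldots,n\}\to\{1,\ldots,n\}$. If $f$ is not injective, say $f(i)=f(j)$ for some $i\neq j$, then, since the $P_{ik}$ commute by \eqref{kompp}, the corresponding summand can be rewritten with the factors $P_{i,f(i)}$ and $P_{j,f(j)}$ adjacent, and it therefore vanishes by \eqref{pijk}. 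Hence only bijective $f$ contribute; writing such an $f$ as $f=\sigma^{-1}$ with $\sigma\in S_n$ identifies $P_{1,f(1)}\cdots P_{n,f(n)}$ with $P_\sigma$, so that $\sum_{\sigma\in S_n}P_\sigma=\mathds{1}$.

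I do not anticipate a genuine obstacle: both parts are essentially bookkeeping built on the relations already established. The only small points are that one needs the row relation $\sum_k P_{ik}=\mathds{1}$ (rather than merely the column relation $\sum_i P_{ik}=\mathds{1}$ coming directly from \eqref{e1.2b}), which is available from the unitarity of $U^T$, and the harmless reordering of commuting factors used to expose the vanishing pair. Alternatively, one could argue the resolution of identity through characters of the abelian C*-algebra generated by the $P_{ik}$: each such character sends this family to a $0$--$1$ matrix with all row and column sums equal to $1$, hence a permutation matrix, and then evaluates $\sum_\sigma P_\sigma$ to $\mathds{1}$; but the direct expansion above seems shortest.
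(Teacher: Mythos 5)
Your proposal is correct and takes essentially the same route as the paper: the resolution of the identity is obtained by multiplying the row relations $\sum_k P_{ik}=\mathds{1}$ (coming from unitarity of $U^T$, Remark \ref{uT}), expanding the product, and annihilating the non-injective terms by the orthogonality relations, while centrality is deduced from Corollary \ref{kom}. You merely spell out the commutation and adjoint bookkeeping in more detail, and your citation of \eqref{pijk} (together with \eqref{kompp} to bring the offending factors together) is indeed the relevant relation for the vanishing terms, where the paper points to \eqref{pik}.
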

\begin{proof}
It follows from unitarity of the matrix $U^T$ (cf. Remark \ref{uT}) that $\sum_{k}P_{ik}=\jed$ for every $i$. Thus,
\beg 
\jed & = &
\sum_{k_1,k_2,\ldots,k_n}P_{1,k_1}P_{2,k_2}\ldots P_{n,k_n} =
\sum_{\substack{k_1,k_2,\ldots,k_n \\ \textrm{$k_i\neq k_j$ for $i\neq j$}}}P_{1,k_1}P_{2,k_2}\ldots P_{n,k_n} \\
&=&
\sum_{\s\in S_n} P_{1,\s^{-1}(1)}P_{2,\s^{-1}(2)}\ldots P_{n,\sigma^{-1}(n)}
=\sum_{\s\in S_n}P_\s
\eeg
The second equality follows from orthogonality relations \eqref{pik}.
Projections $P_\s$ are central due to Corollary \ref{kom}.
\end{proof}



\begin{proof}[Proof of Theorem \ref{iso}]
Let $v_{ik}\in A_\te^n$, $i,k=1,\ldots,n$, be elements defined by
\beq
\label{vik}
v_{ik}=\sum_{\substack{\s\in S_n \\ \s(k)=i}} x_{\s,i}.
\eeq
We will show that elements $v_{ik}$ satisfy relations of Proposition \ref{profil}.
Observe that due to \eqref{mult1}
$$
v_{ik}v_{jl}=\sum_{\substack{\s\in S_n \\ \s(k)=i}}\;\sum_{\substack{\t\in S_n \\ \t(l)=j}}x_{\s,i} x_{\t,j}=\sum_{\substack{\s\in S_n \\ \s(k)=i,\,\s(l)=j}}x_{\s,i}x_{\s,j}.
$$
If $i=j$, $k\neq l$ or $i\neq j$, $k=l$, then $v_{ik}v_{jl}=0$, similarly $v_{il}v_{jk}=0$, $v_{jk}v_{il}=0$ and $v_{jl}v_{ik}=0$. Thus, the relation \eqref{e1.1} is satisfied.
Assume $i\neq j$ and $k\neq l$. Then
\beg
v_{ik}v_{jl}&=&
\sum_{\substack{\s\in S_n \\ \s(k)=i,\,\s(l)=j}}x_{\s,i}x_{\s,j}
\\&=&
\sum_{\substack{\s\in S_n \\ \s(k)=i,\,\s(l)=j}}\om_{ij}^{(\s)} x_{\s,j}x_{\s,i}
\\&=&
\sum_{\substack{\s\in S_n \\ \s(k)=i,\,\s(l)=j}}\om_{ji}\om_{\si(i),\si(j)}\, x_{\s,j}x_{\s,i}
\\&=&
\om_{ji}\om_{kl}\sum_{\substack{\s\in S_n \\ \s(k)=i,\,\s(l)=j}} x_{\s,j}x_{\s,i}
\\&=&
\om_{ji}\om_{kl}\,v_{jl}v_{ik}.
\eeg
Analogously,
$v_{jk}v_{il}=\om_{ij}\om_{kl}\,v_{il}v_{jk}$, so relation \eqref{e1.1} follows.
Further, notice that
\beq
\label{ts}
x_{\s,i}^{} x_{\s,i}^*=x_{\s,j}^{} x_{\s,j}^*
\eeq
for every $\s\in S_n$ and $i,j=1,\ldots,n$. Indeed, since $x_{\s,i}^* x_{\t,j}^{}=0$ for $\s\neq\t$ (cf. \eqref{mult2}), we infer from \eqref{mult3}
$$
x_{\s,i}^{} x_{\s,i}^*=x_{\s,i}^{} x_{\s,i}^*\sum_\t x_{\t,j}^{} x_{\t,j}^*= x_{\s,i}^{} x_{\s,i}^*x_{\s,j}^{} x_{\s,j}^*= 
\sum_\t x_{\t,i}^{} x_{\t,i}^* x_{\s,j}^{} x_{\s,j}^*= x_{\s,j}^{} x_{\s,j}^*.
$$
For any $k,l$ we have from \eqref{ts} and \eqref{mult3}
\beg
\sum_i v_{ik}^{} v_{il}^*&=& \sum_i \sum_{\substack{\s\in S_n \\ \s(k)=i}}\; \sum_{\substack{\t\in S_n \\ \t(l)=i}}x_{\s,i}^{} x_{\t,i}^*
\\& = &
\delta_{k,l}\sum_i\sum_{\substack{\s\in S_n \\ \s(k)=i}}x_{\s,i}^{} x_{\s,i}^* 
\\&=& 
\delta_{k,l}\sum_i\sum_{\substack{\s\in S_n \\ \s(k)=i}}x_{\s,1}^{} x_{\s,1}^* 
\\&=&
\delta_{k,l}\sum_{\s\in S_n}x_{\s,1}^{} x_{\s,1}^*=\delta_{k,l}\jed.
\eeg
Thus, relation \eqref{e1.2a} is satisfied. Similarly, one can prove \eqref{e1.2b}.
Finaly, if $i\neq j$, then
$$
v_{ik}^{} v_{jk}^*=\sum_{\substack{\s\in S_n \\ \s(k)=i}}\; \sum_{\substack{\t\in S_n \\ \t(k)=j}}x_{\s,i}^{} x_{\t,j}^*=0
$$
because $x_{\s,i}^{} x_{\t,j}^*=0$ for $\s\neq\t$ and there is no $\s$ such that $\s(k)=i$ and $\s(k)=j$. Therefore, \eqref{e1.4a} holds. Similar arguments for \eqref{e1.4b}.

Next, we will show, that the algebra $A_\te^n$ is generated by elements $v_{ik}$. 
Notice that for every $\s$ and $i$,
$$
x_{\s,i}x_{\s,i}^*x_{\s,i}^{}=  x_{\s,i}^{}\left(\jed-\sum_{\t\neq\s}x_{\t,i}^*x_{\t,i}^{}\right)= x_{\s,i}^{}-\sum_{\t\neq\s}x_{\s,i}^{}x_{\t,i}^*x_{\t,i}^{} = x_{\s,i}^{}.
$$
Consequently $(x_{\s,i}^*x_{\s,i}^{})^2=x_{\s,i}^*x_{\s,i}^{}$, so $x_{\s,i}^*x_{\s,i}^{}$ is a projection. 
Observe also that
$$
v_{i,\si(i)}^*v_{i,\si(i)}^{}=\sum_{\substack{\t\in S_n \\ \ti(i)=\si(i)}}x_{\t,i}^*x_{\t,i}^{}
$$
and
$$
(v_{1,\si(1)}^*v_{1,\si(1)}^{})(v_{2,\si(2)}^*v_{2,\si(2)}^{})\ldots (v_{n,\si(n)}^*v_{n,\si(n)}^{})=
 (x_{\s,1}^*x_{\s,1}^{})(x_{\s,2}^*x_{\s,2}^{})\ldots (x_{\s,n}^*x_{\s,n}^{})
$$
Hence,
\be
x_{\s,i}&=&
x_{\s,i}^{} x_{\s,i}^*x_{\s,i}^{} \label{xsi}
\\&=&
v_{i,\si(i)} x_{\s,i}^*x_{\s,i}^{} \nonumber
\\&=&
v_{i,\si(i)} (x_{\s,i}^*x_{\s,i}^{})^n \nonumber
\\&=&
v_{i,\si(i)} (x_{\s,1}^*x_{\s,1}^{})(x_{\s,2}^*x_{\s,2}^{})\ldots (x_{\s,n}^*x_{\s,n}^{}) \nonumber
\\&=&
v_{i,\si(i)}(v_{1,\si(1)}^*v_{1,\si(1)}^{})(v_{2,\si(2)}^*v_{2,\si(2)}^{})\ldots (v_{n,\si(n)}^*v_{n,\si(n)}^{}) \nonumber
\ee
Since elements $x_{\s,i}$ generate $A_\te^n$, elements $v_{ik}$ are generators of $A_\te^n$ too.

Finally, let $U_{ik}\in B(H)$ for some Hilbert space $H$, and assume that they satisfy relations \eqref{e1.1} -- \eqref{e1.4b}. We will show that there is a unique representation $\pi:A_\te^n\to B(H)$ such that $\pi(v_{ik})=U_{ik}$ for $i,k=1,\ldots,n$. For $\s\in S_n$ and $i=1,\ldots,n$, define operators $X_{\s,i}\in B(H)$ by 
\beq
\label{Xsi}
X_{\s,i}=U_{i,\si(i)}P_\s,
\eeq
where $P_\s$ is the projection defined in \eqref{psigma}.
It follows from Proposition \ref{cent}, that $X_{\s,i}^{}X_{\t,j}^{}=0=X_{\s,i}^{}X_{\t,j}^*$ for $\s\neq \t$. Proposition \ref{pro1.11} impies
$$
X_{\s,i}X_{\s,j}=U_{i,\si(i)}U_{j,\si(j)}P_\s=\om_{j,i}\om_{\si(i),\si(j)}U_{j,\si(j)}U_{i,\si(i)}P_\s=\om_{ij}^{(\s)}X_{\s,j}X_{\s,i},
$$
while Proposition \ref{uug} leads to
$$
X_{\s,i}^{}X_{\s,j}^*=U_{i,\si(i)}^{}U_{j,\si(j)}^*P_\s=\om_{i,j}\om_{\si(j),\si(i)}U_{j,\si(j)}^*U_{i,\si(i)}^{}P_\s=\om_{ji}^{(\s)}X_{\s,j}^*X_{\s,i}^{}
$$
for $i\neq j$. Since $U_{ik}$ are normal (cf. Proposition \ref{norm}), we have also $X_{\s,i}^{}X_{\s,i}^*=P_\s=X_{\s,i}^*X_{\s,i}^{}$. Proposition \ref{cent} leads to 
$$
\sum_{\s}X_{\s,i}^{} X_{\s,i}^*=\sum_\s X_{\s,i}^*X_{\s,i}^{} =\sum_{\s}P_\s=\jed.
$$
Thus, all relations \eqref{mult1} -- \eqref{mult3} are satisfied by operators $X_{\s,i}$. Since $A_\te^n$ is a universal algebra subject to these relations, there exists a unique representation $\pi:A_\te^n \to B(H)$ such that $\pi(x_{\s,i})=X_{\s,i}$ for every $\s$ and $i$. Now, observe that $U_{ik}P_\s=0$ if and only if $\s(k)\neq i$. Hence
$$U_{ik}=U_{ik}\sum_{\s\in S_n}P_\s=U_{ik}\sum_{\substack{\s\in S_n \\ \s(k)=i}}P_\s=\sum_{\substack{\s\in S_n \\ \s(k)=i}} X_{\s,i}.$$
Comparing it with \eqref{vik} we get $\pi(v_{ik})=U_{ik}$. On the other hand, if $\pi':A_\te^n\to B(H)$ is any representation such that $\pi'(v_{ik})=U_{ik}$, then $\pi'(x_{\s,i})=X_{\s,i}$ by \eqref{xsi} and \eqref{Xsi}, and consequently $\pi'=\pi$.
\end{proof}
\begin{pro}
The C*-algebra $A_\te^n$ is nuclear.
\end{pro}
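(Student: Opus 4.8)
The statement to prove is that the C*-algebra $A_\te^n$ is nuclear. The plan is to exploit the explicit description
$$
A_\te^n=\bigoplus_{\s\in S_n}C\left(\bT_{\te^{(\s)}}^n\right)
$$
obtained in the definition of the quantum multitorus. Nuclearity is preserved under finite direct sums, so it suffices to show that each summand $C(\bT_{\te^{(\s)}}^n)$ is nuclear; since $S_n$ is a finite group, there are only finitely many summands and no subtlety arises here. Thus the whole problem reduces to the single assertion that a noncommutative $n$-torus $C(\bT_\rho^n)$ is nuclear for every real skew-symmetric matrix $\rho$.

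For that, I would recall the standard fact that $C(\bT_\rho^n)$ can be realized as an iterated crossed product of $C(\bT)\cong C^*(\bZ)$ by $\bZ$-actions (building up one generator at a time, each new generator implementing a rotation automorphism on the algebra generated by the previous ones), or equivalently as a twisted group C*-algebra $C^*(\bZ^n,\om)$ with the $2$-cocycle $\om$ determined by $\rho$. Since $\bZ^n$ is amenable, its twisted group C*-algebra is nuclear (a twisted crossed product of a nuclear C*-algebra by an amenable group is nuclear; here one starts from $\bC$, which is nuclear). Alternatively, iterating the fact that a crossed product $B\rtimes\bZ$ is nuclear whenever $B$ is nuclear — which follows because $\bZ$ is amenable — gives nuclearity of $C(\bT_\rho^n)$ after $n$ steps starting from $C(\bT)$. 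Any one of these routes may simply be cited from the literature on noncommutative tori (e.g. Rieffel) or on nuclear C*-algebras.

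Assembling: each $C(\bT_{\te^{(\s)}}^n)$ is nuclear by the above, hence the finite direct sum $\bigoplus_{\s\in S_n}C(\bT_{\te^{(\s)}}^n)=A_\te^n$ is nuclear, which is the claim. I expect no real obstacle here; the only point requiring a moment's care is making explicit which standard nuclearity result is being invoked (amenable crossed products, or twisted group algebras of amenable groups), but this is entirely routine and can be dispatched with an appropriate reference.
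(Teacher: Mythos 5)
Your argument is correct, but it is genuinely different from the one in the paper. You reduce the statement to the classical fact that each noncommutative torus $C(\bT_{\te^{(\s)}}^n)$ is nuclear --- e.g.\ because it is the twisted group C*-algebra $C^*(\bZ^n,\om)$ of the amenable group $\bZ^n$, or an $n$-fold iterated crossed product of $C(\bT)$ by $\bZ$ --- and then use that nuclearity passes to the finite direct sum $\bigoplus_{\s\in S_n}C(\bT_{\te^{(\s)}}^n)$. Both steps are standard and the finiteness of $S_n$ removes any subtlety, so the proof goes through. The paper argues differently: it considers the diagonal gauge action $\tilde{\gamma}=\bigoplus_\s\gamma^{(\s)}$ of the compact group $\bT^n$ on $A_\te^n$, observes that the fixed-point algebra is the linear span of the central projections $p_\s$, hence isomorphic to the commutative (indeed finite-dimensional) algebra $C(S_n)$, and then invokes the result of Doplicher--Longo--Roberts--Zsid\'o that a C*-algebra carrying a compact group action with nuclear fixed-point algebra is itself nuclear. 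Your route leans on well-known external facts about noncommutative tori and is arguably the more elementary citation path; the paper's route is a one-step application of a general criterion that fits naturally with the group-action structure already developed there, and does not require knowing nuclearity of the individual torus summands in advance. Either way the conclusion is the same, and your proposal has no gap.
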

\begin{proof}
For $\s\in S_n$, let $\gamma^{(\s)}$ be the action of $\bT^n$ on $C(\bT_{\te^{(\s)}}^n)$ given by \eqref{gama}. Consider an action $\tilde{\gamma}=\bigoplus_\s\gamma^{(\s)}$ of $\bT^n$ on $A_\te^n$, i.e.
$ 
\tilde{\gamma}_t(x_{\s,i})=t_ix_{\s,i}
$ 
for $\s\in S_n$, $i=1,\ldots,n$. The fixed point C*-subalgebra $(A_\te^n)^{\tilde{\gamma}}$ is the linear span of central projections $p_\s$ (cf. Proposition \ref{cent}), i.e. $(A_\te^n)^{\tilde{\gamma}}$ is isomorphic to $C(S_n)$. Since $(A_\te^n)^{\tilde{\gamma}}$ is commutative, it is a nuclear C*-algebra. Hence, according to \cite[Proposition 2]{DLRZ} $A_\te^n$ is nuclear too.
\end{proof}
\begin{m}
Let us consider the case when $\omega_{ij}$ are trivial, i.e. $\omega_{ij}=1$ for all $i,j$ (or equivalently $\theta=0$). Then $A^n_0$ is a commutative C*-algebra which is isomorphic to the algebra $C(\mathds{T}^n\rtimes S_n)$ of continuous functions on semidirect product $\mathds{T}^n\rtimes S_n$. To verify this one can think of $\mathds{T}^n\rtimes S_n$ as a subgroup in a group of $n\times n$ unitary matrices $U(n)$, i.e. there is a faithful representation $\rho:\mathds{T}^n\rtimes S_n\rightarrow U(n)$ such that
\begin{equation}
\rho((t,\sigma))_{i,j}=t_i\delta_{i,\sigma(j)}
\end{equation}where $(t,\sigma)\in \mathds{T}^n\rtimes S_n$. Therefore, algebra of continuous functions on $\mathds{T}^n\rtimes S_n$ can be seen as a universal C*-algebra generated by matrix entries functions $\rho_{i,j}:\mathds{T}^n\rtimes S_n\rightarrow \mathbb{C}$. It is easy to check that such generators are subjected to relations \eqref{e1.1} -- \eqref{e1.4b} with $\omega_{ij}=1$ for all $i,j$.
\end{m}
\begin{m}
\label{uwHajac}
It is worth noting that for $n=2$ the C*-algebra $A_\theta^2$ is in fact the algebra of the quantum double torus $D_q\bT^2$ (described by Hajac and Masuda \cite{HM98}) if one puts $\omega_{21}=q$. It is not surprising since the structure of $D_q\bT$ was obtain as a deformation of the algebra of continuous functions on the semidirect product $\mathds{T}^2\rtimes \mathds{Z}_2$, where $\mathds{Z}_2$ can be treated as $S_2$.
\end{m}

\section{Compact quantum group $\bG_\te$
}
\subsection{General properties}
The disscusion in previous section provides necessary tools for proving the following two central propositions. In the sequel, due to Theorem \ref{iso}, we can identify generators $v_{ik}$ of $A_\te^n$ with generators $u_{ik}$ of the universal algebra $A_u$.
\begin{pro} 
\label{cqgte}
The pair $\bG_\te=(A_\theta^n,\Delta)$ with comultiplication $\Delta:A_\te^n\to A_\te^n\otimes A_\te^n$ given by
\begin{equation}
\label{deluik}
\Delta(u_{ik})=\sum_j u_{ij}\otimes u_{jk}
\end{equation}
is a compact quantum group.
\end{pro}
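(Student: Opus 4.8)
The plan is to verify the two Woronowicz axioms from Definition \ref{cqg} for the pair $(A_\theta^n,\Delta)$. The first task is to check that the formula \eqref{deluik} actually defines a unital $^*$-homomorphism $\Delta:A_\theta^n\to A_\theta^n\otimes A_\theta^n$. By the universal property of $A_u\cong A_\theta^n$ (the Corollary after Proposition \ref{rep}, together with Theorem \ref{iso}), it suffices to show that the elements $w_{ik}:=\sum_j u_{ij}\otimes u_{jk}\in A_\theta^n\otimes A_\theta^n$ satisfy the relations \eqref{e1.1}--\eqref{e1.4b} of Proposition \ref{profil}. This is a direct computation: expand $w_{ik}w_{jl}=\sum_{p,q}u_{ip}u_{jq}\otimes u_{pk}u_{ql}$, split the sum according to whether $p=q$ or $p\neq q$, and use \eqref{uikuil}, \eqref{uij} (to kill the diagonal terms, since a factor $u_{ip}u_{jp}$ or $u_{ip}u_{iq}$ vanishes in the relevant cases) and the commutation relation \eqref{e1.51} in both legs; the phases $\omega_{qp}\omega_{kl}$ from the right leg and $\omega_{ji}\omega_{pq}$-type phases from the left leg combine so that the $\omega_{pq}$ factors telescope, leaving exactly \eqref{e1.1}. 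Similarly, $\sum_i w_{ik}w_{il}^*=\sum_i\sum_{p,q}u_{ip}u_{iq}^*\otimes u_{pk}u_{ql}^*$; by \eqref{uiluikg} and \eqref{e1.4b} only $p=q$ survives, and then $\sum_i u_{ip}u_{ip}^*\otimes(\cdot)=\jed\otimes\sum_p u_{pk}u_{pl}^*=\delta_{kl}\jed\otimes\jed$ by \eqref{e1.2a}, giving \eqref{e1.2a} for $w$; \eqref{e1.2b}, \eqref{e1.4a}, \eqref{e1.4b} are analogous. Coassociativity $(\Delta\otimes\id)\Delta=(\id\otimes\Delta)\Delta$ holds because both sides send $u_{ik}$ to $\sum_{p,q}u_{ip}\otimes u_{pq}\otimes u_{qk}$, and since the $u_{ik}$ generate $A_\theta^n$ this identity of $^*$-homomorphisms extends to all of $A_\theta^n$.

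It remains to verify the density (cancellation) conditions: $\Delta(A_\theta^n)(\jed\otimes A_\theta^n)$ and $\Delta(A_\theta^n)(A_\theta^n\otimes\jed)$ are dense in $A_\theta^n\otimes A_\theta^n$. The standard route, which I would follow, is to use the fact that $u=(u_{ik})$ is a \emph{unitary} corepresentation matrix whose conjugate is also unitary: by the Proposition stating $U^*U=\jed=UU^*$ (for the universal generators) and Remark \ref{uT}, both $u$ and $u^T$ (hence $\bar u$) are unitary in $M_n(A_\theta^n)$. For such a matrix the Hopf $^*$-algebra $\cA$ generated by the $u_{ik}$ is spanned by products of the $u_{ik}$ and $u_{ik}^*$, and one computes, for fixed $k$,
$$
\sum_{j}\Delta(u_{jk})(\jed\otimes u_{jk}^*)\;=\;\sum_{i,j}u_{ij}\otimes u_{jk}u_{jk}^*,
$$
and more usefully $\sum_{j}\Delta(u_{ij})(\jed\otimes u_{kj}^*)=\sum_{p}u_{ip}\otimes\big(\sum_j u_{pj}u_{kj}^*\big)=u_{ik}\otimes\jed$ using that $u^T$ is unitary (i.e. $\sum_j u_{pj}u_{kj}^*=\delta_{pk}\jed$, which is \eqref{e1.2a} with legs arranged appropriately — more precisely it is $(UU^*)_{pk}$ from the Proposition on unitarity of $U$). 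Hence $u_{ik}\otimes\jed\in\overline{\Delta(\cA)(\jed\otimes\cA)}$ for all $i,k$; combined with $\Delta(u_{ik})=\sum_j u_{ij}\otimes u_{jk}\in\Delta(\cA)$ this yields $u_{ik}\otimes\jed$ and then, multiplying $\Delta(u_{pk})$ on the right by elements $(\jed\otimes a)$ and using already-produced elements, one gets $\jed\otimes u_{jk}$ as well; since the $u_{ik}\otimes\jed$ and $\jed\otimes u_{jk}$ together with their adjoints generate $\cA\otimes\cA$ (dense in $A_\theta^n\otimes A_\theta^n$), density of $\Delta(A_\theta^n)(\jed\otimes A_\theta^n)$ follows. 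The argument for $\Delta(A_\theta^n)(A_\theta^n\otimes\jed)$ is symmetric, using unitarity of $U$ on the other side. Alternatively — and this is the cleaner packaging — one invokes Woronowicz's criterion: a unital $^*$-homomorphism $\Delta$ satisfying coassociativity is a compact quantum group comultiplication provided there is a dense unital $^*$-subalgebra $\cA$ of ``matrix coefficients'' on which $\Delta$ is a Hopf $^*$-algebra comultiplication admitting a counit $\ep(u_{ik})=\delta_{ik}$ and antipode $\kappa(u_{ik})=u_{ki}^*$ (well-defined since $\bar u$ is unitary); the cancellation conditions are then automatic.

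The main obstacle is the density/cancellation condition, not coassociativity or well-definedness. Concretely, the delicate point is producing $\jed\otimes u_{jk}$ inside the closed span $\overline{\Delta(A_\theta^n)(\jed\otimes A_\theta^n)}$: one has $\Delta(u_{jk})\in\Delta(A_\theta^n)$ and one must ``subtract off'' the unwanted terms $u_{ij}\otimes u_{jk}$ with $i\neq j$, which requires left-multiplying by suitable combinations of $u_{pq}\otimes\jed$ — and those $u_{pq}\otimes\jed$ are themselves only available after the first step. The safest way to avoid circularity is to exhibit the counit and antipode on $\cA$ explicitly (using that $u$, $u^T$, $\bar u$ are all unitary, which we already have), so that $(\cA,\Delta|_\cA,\ep,\kappa)$ is a Hopf $^*$-algebra; then apply Woronowicz's theorem (\cite{Wor87,Wor94}) that the C*-completion of such a Hopf $^*$-algebra with all corepresentations unitarizable is a compact quantum group. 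Everything needed for this — unitarity of the fundamental matrix and its conjugate, and the relations — has been established in Section 3, so the proof is essentially an assembly of those facts plus a citation.
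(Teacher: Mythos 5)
Your proposal is correct and follows essentially the same route as the paper: well-definedness of $\Delta$ via the universal property by checking the relations \eqref{e1.1}--\eqref{e1.4b} for $\sum_j u_{ij}\otimes u_{jk}$, coassociativity on generators, and the density conditions obtained from the fact that $A_\te^n$ is generated by the entries of a unitary matrix $u$ whose transpose is also unitary (the paper packages this last step as a citation of \cite[Proposition 3.6]{MD98}, which is the same compact-matrix-quantum-group criterion you invoke via Woronowicz). Your auxiliary direct cancellation sketch is not needed once that criterion is cited, and you rightly fall back on it.
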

\begin{proof}
Straightforward computation shows that $\Delta$ is compatible with all relations \eqref{e1.1} -- \eqref{e1.4b}, so it can be uniquely extended to the unital $^*$-homomorphism on the whole $A_\theta^n$. To show that $\Delta$ satisfies condition (1) of Definition \ref{cqg} it is enough to cheeck that equality for generators $u_{ik}$, what seems to be a rather easy exercise. Finally, let us recall that $A_\theta^n$ is a universal C*-algebra generated by entries of the invertible matrix, so the condition (2) of Definition \ref{cqg} is also satisfied (for details see \cite[Proposition 3.6]{MD98}).
\end{proof}

Observe that the dense *-subalgebra $\cA_\te^n$ generated by matrix coefficients $u_{ik}$ can be described as follows. Remind that there are central projections $p_\s$, $\s\in S_n$, such that $\sum_\s p_\s=\jed$, where
$$p_\s=\prod_{i=1}^n u_{i,\si(i)}^{}u_{i,\si(i)}^*$$
 (cf. Proposition \ref{cent}). Let us introduce the following notations. For $\s\in S_n$ and 
$i=1,\ldots,n$ let
$$x_{\s,i}=u_{i,\si(i)}p_\s,$$
and for $\s\in S_n$, $\vr=(r_1,\ldots,r_n)\in\bZ^n$, let
\beq
u_\s^\vr=u_{1,\si(1)}^{r_1}u_{2,\si(2)}^{r_2}\ldots u_{n,\si(n)}^{r_n},
\eeq
where $u_{ik}^r=(u_{ik}^*)^{-r}$ for $r<0$ and $u_{i\sigma^{-1}(i)}^0=p_\sigma$ for any $\sigma$ .
Since $u_{ik}^\#u_{jl}^\#=0$ whenever $i=j$, $k\neq l$ or $i\neq j$, $k=l$ (where $a^\#$ is $a$ or $a^*$), we have
$$
\cA_\te^n=\mathrm{span}\{u_\s^\vr:\,\s\in S_n,\,\vr\in\bZ^n\}.
$$
Remind that for fixed $\s$, elements $x_{\s,i}$, $i=1,\ldots,n$, generate the algebra $C(\bT_{\te^{(\s)}}^n)$, and 
$$\mathrm{Poly}(C(\bT_{\te^{(\s)}}^n))=\mathrm{span}\{x_\s^\vr:\,\vr\in\bZ^n\},$$
where $x_\s^\vr=x_{\s,1}^{r_1}\ldots x_{\s,n}^{r_n}$. Therefore
$$\cA_\te^n=\bigoplus_{\s\in S_n}\mathrm{Poly}(C(\bT_{\te^{(\s)}}^n)).$$
One can check that the counit $\varepsilon:\cA_\te^n\to \bC$ and coinverse $\kappa:\cA_\te^n\to \cA_\te^n$ are defined on generators as
\beq
\varepsilon(x_{\s,i})=\delta_{\s,e},
\eeq
\beq
\kappa(x_{\s,i})=x_{\si,\si(i)}^*,
\eeq
where $e$ denotes the unit of the group $S_n$.


Now, we define a coaction of $\bG_\te$ on the algebra $C(\bT_\te^n)$ of the noncommutative $n$-torus. Let us remind (cf. Example \ref{exfol}), that we consider subspaces $V_{p,q}\subset C(\bT_\te^n)$, $p,q=0,1,2,\ldots$, where
$$
V_{p,q}=\mathrm{span}\{x_{i_1}^{} \ldots x_{i_p}^{} x_{j_q}^*\ldots x_{j_1}^*:\,\mbox{$i_s\neq j_t$ for every $s=1,\ldots,p$ and $t=1,\ldots,q$}\}
$$
The family of these subspaces together with the faithful trace $\phi$ on $C(\bT_\te^n)$ forms an orthogonal filtration $\cV$ on $C(\bT_\te^n)$. 
\begin{thm} 
There exists a coaction $\alpha:C(\mathds{T}_\theta^n)\rightarrow C(\mathds{T}_\theta^n)\otimes A_\theta^n$ of $\bG_\te$ on $C(\bT_\te^n)$ defined by 
\begin{equation}
\label{alfa}
\alpha(x_k)=\sum_i x_i\otimes u_{ik},
\end{equation}
where $\left\{x_i:i=1,2\ldots,n\right\}$ is a set of generators of $C(\mathds{T}_\theta^n)$, $\left\{u_{ik}:i,k=1,2,\ldots,n\right\}$ is a set of generators of $A_\theta^n$ and $\alpha$ acts in a filtration preserving way.

Moreover, the quantum group $\bG_\te$ is the quantum symmetry group of the noncommutative $n$-torus $\bT_\te^n$.
\end{thm}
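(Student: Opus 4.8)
The plan is to verify the three defining properties: that $\alpha$ is a well-defined $^*$-homomorphism, that it is a coaction, and that it is filtration preserving; and then to identify $\bG_\te$ as the final object in $\cC_{C(\bT_\te^n)}$. First I would check that the formula \eqref{alfa} extends to a $^*$-homomorphism $C(\bT_\te^n)\to C(\bT_\te^n)\otimes A_\te^n$. Since $C(\bT_\te^n)$ is the universal C*-algebra on unitaries $x_k$ with $x_kx_l=\om_{kl}x_lx_k$, it suffices to check that the elements $y_k:=\sum_i x_i\otimes u_{ik}$ are unitary and satisfy $y_ky_l=\om_{kl}y_ly_k$. Unitarity of the $y_k$ follows from relations \eqref{e1.2a}, \eqref{e1.2b}, \eqref{e1.4a}, \eqref{e1.4b}: expanding $y_k^*y_k=\sum_{i,j}x_i^*x_j\otimes u_{ik}^*u_{jk}$, the off-diagonal terms ($i\neq j$) vanish by \eqref{e1.4b} and the diagonal terms sum to $\jed\otimes\sum_i u_{ik}^*u_{ik}=\jed\otimes\jed$ by \eqref{e1.2b}; similarly $y_ky_k^*=\jed\otimes\jed$. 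The commutation relation $y_ky_l=\om_{kl}y_ly_k$ is exactly the content of relation \eqref{e1.1}: expanding both sides in the basis $\{x_ix_j:i\le j\}$ of $\mathrm{Poly}(\bT_\te^n)$ reduces precisely to \eqref{e1.1} (this is the computation already carried out in the proof of Proposition \ref{profil}, read backwards). So $\alpha$ is a well-defined unital $^*$-homomorphism.

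Next I would verify the coaction axioms. Property (1) of Definition \ref{act}, namely $(\alpha\otimes\id)\alpha=(\id\otimes\Delta)\alpha$, need only be checked on the generators $x_k$, where both sides equal $\sum_{i,j}x_i\otimes u_{ij}\otimes u_{jk}$ using \eqref{deluik}; since both sides are $^*$-homomorphisms and agree on generators, they agree on all of $C(\bT_\te^n)$. Property (2), density of $\alpha(C(\bT_\te^n))(\jed\otimes A_\te^n)$ in $C(\bT_\te^n)\otimes A_\te^n$, follows by a standard argument: using unitarity of the matrix $u=(u_{ik})$ (established in Section 3), one recovers $x_i\otimes u_{ik}$ from $\alpha(x_k)$ via the coefficients of $u^*$, and then $x_i\otimes p_\s$, hence $x_i\otimes \jed$, lie in the closure, so all of $\mathrm{Poly}(\bT_\te^n)\odot\cA_\te^n$ is in the closed span of $\alpha(C(\bT_\te^n))(\jed\otimes A_\te^n)$. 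The filtration-preserving property $\alpha(V_{p,q})\subset V_{p,q}\otimes A_\te^n$ is checked directly: $\alpha(x_{i_1}\cdots x_{i_p}x_{j_q}^*\cdots x_{j_1}^*)$ expands as a sum of $x_{k_1}\cdots x_{k_p}x_{l_q}^*\cdots x_{l_1}^*\otimes u_{k_1 i_1}\cdots u_{k_p i_p}u_{l_q j_q}^*\cdots u_{l_1 j_1}^*$, and whenever some $k_s=l_t$ the corresponding product of $u$'s contains a factor $u_{k_s i_s}u_{k_s j_t}^*$ which vanishes for $i_s\neq j_t$ by \eqref{e1.4a} (rewritten with Corollary \ref{zeros} as needed); hence only words in $V_{p,q}$ survive.

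Finally, for the universality statement, I would invoke the construction of Section 3 together with the universal property established in Proposition \ref{profil} and its corollary. The point is that Proposition \ref{profil} shows the relations \eqref{e1.1}--\eqref{e1.4b} are \emph{necessary} for any object $(\bG,\beta)\in\cC_{C(\bT_\te^n)}$: if $\beta(x_k)=\sum_i x_i\otimes u_{ik}^\bG$ then the $u_{ik}^\bG\in C(\bG)_u$ satisfy those relations, so by the universal property of $A_u\cong A_\te^n$ (the Corollary after Proposition \ref{rep}, together with Theorem \ref{iso}) there is a unique unital $^*$-homomorphism $\pi:A_\te^n\to C(\bG)_u$ with $\pi(u_{ik})=u_{ik}^\bG$. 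Compatibility with comultiplication, \eqref{univ1}, follows from \eqref{deluik} and the analogous relation \eqref{del} for $u_{ik}^\bG$ (which holds because $\beta$ is a coaction); compatibility with the coactions, \eqref{univ2}, holds on generators since $(\id\otimes\pi)\alpha(x_k)=\sum_i x_i\otimes\pi(u_{ik})=\sum_i x_i\otimes u_{ik}^\bG=\beta(x_k)$, and both sides are $^*$-homomorphisms on $\mathrm{Poly}(\bT_\te^n)$. Uniqueness of $\pi$ is forced since the $u_{ik}$ generate $A_\te^n$. Thus $(\bG_\te,\alpha)$ is the final object in $\cC_{C(\bT_\te^n)}$, i.e. the quantum symmetry group of $\bT_\te^n$. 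I expect the main obstacle to be the density condition (2) of Definition \ref{act} and the careful bookkeeping in the filtration-preservation check — ensuring every cross term with a repeated index is killed by the correct one of the relations \eqref{e1.4a}, \eqref{e1.4b}, \eqref{uikuil}, \eqref{uij} — rather than anything conceptually deep, since all the hard structural work (the isomorphism $A_u\cong A_\te^n$, unitarity and normality of $u$) is already in hand.
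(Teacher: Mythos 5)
Your proposal is correct and follows essentially the same route as the paper: well-definedness of $\alpha$ from the relations \eqref{e1.1}--\eqref{e1.4b} via universality of $C(\bT_\te^n)$, coassociativity checked on generators, density from unitarity of $u$, filtration preservation from the vanishing relations of Corollary \ref{zeros}, and finality via Proposition \ref{profil} together with the universal property of $A_u\cong A_\te^n$. The only nitpick is that the vanishing of a factor $u_{k_s i_s}u_{k_s j_t}^*$ (same row, different columns) is \eqref{uiluikg} from Corollary \ref{zeros} rather than \eqref{e1.4a}, which you essentially acknowledge, and the level of detail you leave implicit (e.g.\ extending the density computation from generators to words) matches the paper's own.
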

\begin{proof}
Since generators $u_{ik}$ satisfy the relations \eqref{e1.1} -- \eqref{e1.4b}, condition \eqref{alfa} determines a uniqe unital $^*$-homomorphism $\alpha$ on $C(\mathds{T}_\theta^n)$. To show that $\alpha$ satisfies condition (1) of Definition \ref{act} it is enough to verify it only for generators $u_{ik}$. It follows from straightforward calculations. 
Further, for any $k$, unitarity of the matrix $u$ implies
$$ 
\sum_i\alpha \left(x_i\right) \left(\mathds{1}_{C(\mathds{T}_\theta^n)}\otimes u^*_{ki}\right)=\sum_{i,j}x_j\otimes u_{ji}u_{ki}^*=
\sum_{j}x_j\otimes \delta_{jk}\mathds{1}_{A^n_{\theta}}=x_k\otimes \mathds{1}_{A^n_{\theta}}.
$$ 
Hence condition (2) of Definition \ref{act} follows. To end the proof we should show that $\alpha(V_{p,q})\subset V_{p,q}\otimes A_\theta^n$. 
Indeed, if $k_s\neq l_t$ for $s=1,\ldots,p$ and $t=1,\ldots,q$, then
\beg
\alpha(x_{k_1}^{} \ldots x_{k_p}^{} x_{l_q}^*\ldots x_{l_1}^*) &=&
\sum_{i_1,\ldots,i_p,j_1,\ldots,j_q}x_{i_1}^{} \ldots x_{i_p}^{} x_{j_q}^*\ldots x_{j_1}^* \otimes u_{i_1,k_1}^{}\ldots u_{i_p,k_p}^{} u_{j_q,l_q}^*\ldots u_{j_1,l_1}^*
\eeg
If the product $u_{i_1,k_1}^{}\ldots u_{i_p,k_p}^{} u_{j_q,l_q}^*\ldots u_{j_1,l_1}^*$ is nonzero, then it follows from Corollary \eqref{zeros} that $i_s\neq j_t$ for every $t,s$. Thus $\alpha(u_{i_1,k_1}^{}\ldots u_{i_p,k_p}^{} u_{j_q,l_q}^*\ldots u_{j_1,l_1}^*)\in V_{p,q}\otimes A_\te^n$.

It remains to show that $(\bG_\te,\alpha_\te)$ is a final object in the category $\cC_{C(\bT_\te^n),\cV}$ of compact quantum groups acting in the filtration preserving way on $C(\bT_\te^n)$ (cf. Subsection \ref{subs_qsg} and \cite{BS13}). Assume that $(\bG, \alpha_\bG)$ is an object in the category $\cC_{C(\bT_\te^n),\cV}$, where $\bG=(A_\bG,\Delta_\bG)$. Since $\alpha_\bG(V_{1,0})\subset V_{1,0}\otimes A_\bG$, there are uniquely determined elements $w_{ik}\in A_\bG$ such that 
$\alpha_\bG(x_k)=\sum_{i=1}^n x_i\otimes w_{ik}.$
Clearly, the elements form a representation. Consequently, there is a unital *-homomorphism $\pi: A_\te^n\to A_\bG$ such that $\pi(u_{ik})=w_{ik}$. One easily checks that both conditions \eqref{univ1} and \eqref{univ2} are satisfied. Moreover, if $\pi': A_\te^n\to A_\bG$ is another *-homomorphism satisfying these conditions, then 
$$\sum_i x_i\otimes \pi'(u_{ik})=\sum_i x_i\otimes w_{ik}.$$
Thus, $\pi'(u_{ik})=w_{ik}=\pi(u_{ik})$, hence $\pi'=\pi$. 
\end{proof}

\subsection{Haar measure}
Before we go further, let us consider some useful structural results. 
For each $\vr\in\bZ^n$ consider a function $\tilde{\psi}_\vr:S_n\times S_n\to \bT$ given by
$$ 
\tilde{\psi}_{\vec{r}}(\t,\s)=\prod_{\substack {i<j\\ \tau^{-1}(i)>\tau^{-1}(j)}} \left(\om_{\t^{-1}(j),\t^{-1}(i)}\om_{\s^{-1}(i),\s^{-1}(j)}\right)^{r_ir_j}
$$ 
Making substitution $\t^{-1}(i)\mapsto j$, $\t^{-1}(j)\mapsto i$ one can observe that
\beq
\label{psi-tilde}
\tilde{\psi}_{\vec{r}}(\t,\s)=\prod_{\substack {i<j\\ \tau(i)>\tau(j)}} \left(\om_{ij}\om_{\s^{-1}\t(j),\s^{-1}\t(i)}\right)^{r_{\t(i)}r_{\t(j)}}
\eeq
\color{black}
For $\vec{r}
\in\bZ^n$ and $\s\in S_n$ let $\s\vec{r}=(r_{\s^{-1}(1)},\ldots,r_{\s^{-1}(n)})$. Define 
$$u_{\t,\s}^{\vec{r}}=u_{\ti(1),\s^{-1}(1)}^{r_1}u_{\ti(2),\s^{-1}(2)}^{r_2}\ldots u_{\ti(n),\s^{-1}(n)}^{r_n}
$$
for $\s,\t\in S_n$.
One can easily check that 
\beq
\label{porz}
u_{\t,\s}^{\vec{r}}=\tilde{\psi}_{\vec{r}}(\t,\s)u_{\ti\s}^{\ti\vec{r}}.
\eeq

Further, by \eqref{deluik} we get 
$$ 
\Delta(p_{ik})=\sum_lp_{il}\otimes p_{lk}.
$$ 
Due to orthogonality relations \eqref{pijk} it leads to
\begin{eqnarray}
\Delta(p_\sigma)
&=&
\sum_{l_1,\ldots l_n}p_{1,l_1}\ldots p_{n,l_n}\otimes p_{l_1,\sigma^{-1}(1)}\ldots p_{l_n,\sigma^{-1}(n)}\label{deltap}
\\
&=&\sum_{\tau}p_{1,\ti(1)}\ldots p_{n,\ti(n)}\otimes p_{\ti(1),\sigma^{-1}(1)}\ldots p_{\ti(n),\sigma^{-1}(n)}
\nonumber\\
&=&\sum_{\tau}p_{1,\ti(1)}\ldots p_{n,\ti(n)}\otimes p_{1,\sigma^{-1}\t(1)}\ldots p_{n,\sigma^{-1}\t(n)}\nonumber\\
&=&\sum_{\t} p_{\t}\otimes p_{\ti\sigma}.\nonumber
\end{eqnarray}
Relations \eqref{deluik} and \eqref{porz} imply 
\begin{eqnarray}
\Delta(u_\sigma^\vr)&=&
\sum_{l_1,l_2\ldots l_{n}}u^{r_1}_{1,l_1}\ldots u^{r_n}_{n,l_{n}}\otimes u^{r_1}_{l_1,\sigma^{-1}(1)}\ldots u^{r_n}_{l_{n},\sigma^{-1}(n)} \label{deltau}\\
&=&\sum_{\t}u^{r_1}_{1,\ti(1)}
\ldots u^{r_n}_{n,\ti(n)}\otimes u^{r_1}_{\ti(1),\sigma^{-1}(1)}
\ldots u^{r_n}_{\ti(n),\sigma^{-1}(n)}\nonumber\\
&=&
\sum_{\t}u_\t^\vr\otimes u_{\t,\s}^\vr
=\sum_{\tau}\tilde{\psi}_\vr(\tau,\sigma) u_{\t}^\vr\otimes u_{\ti\sigma}^{\ti\vr},
\nonumber
\end{eqnarray}
By (\ref{deltap}) the above formula is true also for $\vr$ with at least one $r_i=0$.
Consequently 
\beq
\label{deltax}
\Delta(x_{\s}^\vr)=\sum_\t \tilde{\psi}_\vr(\tau,\sigma)x_\t^\vr \otimes x_{\ti\s}^{\ti\vr}.
\eeq

Now, let us consider the Haar measure $h$ on $\bG_\te$. Since the underlying algebra decomposes as a direct sum $A^n_\theta=\bigoplus_{\sigma}C(\bT_{\te^{(\s)}}^n)$ it is enough to specify $h$ on the elements which lineary generate each component $C(\bT_{\te^{(\s)}}^n)$.
\begin{pro} 
Haar measure $h$ on $\bG_\te
$ satisfies 
\beq
\label{haar}
h(x_\s^\vr)=\frac{1}{n!}\delta_{\vec{0},\vr}.
\eeq
\end{pro}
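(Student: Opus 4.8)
The plan is to use the defining property of the Haar measure, namely $(h \otimes \id)\Delta = h(\cdot)\jed = (\id \otimes h)\Delta$, together with the explicit comultiplication formula \eqref{deltax}. First I would verify the trivial case $\vr = \vec{0}$: then $x_\s^{\vec{0}} = p_\s$, and since $h$ is a state with $\sum_\s p_\s = \jed$, applying $h$ to the resolution of the identity gives $\sum_\s h(p_\s) = 1$; a symmetry argument (see below) forces $h(p_\s) = \tfrac{1}{n!}$ for each $\s$, which matches \eqref{haar}.

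For $\vr \neq \vec{0}$ the strategy is to apply $\id \otimes h$ to both sides of \eqref{deltax}. The right-hand side becomes $\sum_\t \tilde{\psi}_\vr(\t,\s)\, x_\t^\vr \, h(x_{\ti\s}^{\ti\vr})$, while the left-hand side equals $h(x_\s^\vr)\jed$ by the Haar property. Since the elements $\{x_\t^\vr : \t \in S_n\}$ (for this fixed nonzero $\vr$) are linearly independent in $A_\te^n$ — they live in distinct summands $C(\bT^n_{\te^{(\t)}})$ and are nonconstant monomials there — and $\jed = \sum_\t p_\t$ is spread across all summands, comparing the $\t$-component for any $\t \neq e$ (more precisely, any $\t$ with $x_\t^\vr$ not proportional to $p_\t$, which holds for all $\t$ when $\vr\neq\vec0$) forces $h(x_{\ti\s}^{\ti\vr}) = 0$. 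As $\s$ ranges over $S_n$ and $\ti\s$ then ranges over all of $S_n$, and $\ti\vr$ is again a nonzero vector, this yields $h(x_\rho^{\vec{s}}) = 0$ for every $\rho \in S_n$ and every nonzero $\vec s \in \bZ^n$. Combined with the $\vr = \vec 0$ case this is exactly \eqref{haar}.

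The remaining point — the normalization $h(p_\s) = \tfrac1{n!}$ — I would handle by exhibiting, for each pair $\s,\rho \in S_n$, an automorphism of $\bG_\te$ carrying $p_\s$ to $p_\rho$ and invoking uniqueness of the Haar state (which is invariant under any Hopf-algebra automorphism). Concretely, right multiplication by a permutation: the assignment $u_{ik} \mapsto u_{i,\rho(k)}$ (equivalently relabeling the second index by a fixed permutation) respects relations \eqref{e1.1}--\eqref{e1.4b} and commutes with $\Delta$, and it sends $p_\s = \prod_i u_{i,\si(i)}u_{i,\si(i)}^*$ to $p_{\rho^{-1}\s}$; running over all such maps shows all $h(p_\s)$ are equal, hence each is $\tfrac1{n!}$.

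The main obstacle I anticipate is the linear-independence/"extraction of coefficients" step: one must be careful that after applying $\id\otimes h$, the identity $h(x_\s^\vr)\jed = \sum_\t \tilde\psi_\vr(\t,\s) x_\t^\vr\, h(x_{\ti\s}^{\ti\vr})$ is being read in $A_\te^n = \bigoplus_\t C(\bT^n_{\te^{(\t)}})$ componentwise, and that for $\vr \neq \vec 0$ the monomial $x_\t^\vr$ sits in the orthogonal complement of $\bC p_\t$ inside its summand (so it cannot contribute to the $\jed$ on the left unless its coefficient $h(x_{\ti\s}^{\ti\vr})$ vanishes). This requires knowing $x_\t^\vr \neq c\, p_\t$ for $\vr\neq\vec0$, i.e. that the generators $x_{\t,i}$ have infinite order — which follows since $C(\bT^n_{\te^{(\t)}})$ is a noncommutative torus and the $x_{\t,i}$ are its standard unitary generators, whose powers $x_\t^\vr$ are linearly independent over $\bC$. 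Everything else is bookkeeping with \eqref{deltax} and the Haar axioms.
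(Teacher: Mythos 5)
Your overall direction is legitimate but genuinely different from the paper's: the paper goes the other way, \emph{defining} the functional by \eqref{haar} (it is, up to the normalization $\tfrac1{n!}$, the direct sum of the canonical traces $\phi_\s$, hence a state), verifying both invariance identities by means of \eqref{deltax}, and then invoking uniqueness of the Haar state; you instead start from the Haar state supplied by Woronowicz's theorem and extract its values from $(\id\otimes h)\Delta=h(\cdot)\jed$ together with \eqref{deltax}. Your step for $\vr\neq\vec 0$ is sound: reading the identity componentwise in $A_\te^n=\bigoplus_\t C(\bT^n_{\te^{(\t)}})$ and using that $x_\t^\vr$ and $p_\t$ are linearly independent inside the $\t$-summand forces $h(x_\s^\vr)=0$.

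The genuine gap is in your normalization step. The assignment $u_{ik}\mapsto u_{i,\rho(k)}$ does not respect relation \eqref{e1.1} for general $\te$: for $i\neq j$, $k\neq l$ one has $u_{i,\rho(k)}u_{j,\rho(l)}=\om_{ji}\om_{\rho(k),\rho(l)}u_{j,\rho(l)}u_{i,\rho(k)}$ in $A_\te^n$, whereas the transported relation would require the factor $\om_{ji}\om_{kl}$, so you would need $\om_{\rho(k),\rho(l)}=\om_{kl}$ for all $k,l$ — false for generic $\te$. Indeed no automorphism of $A_\te^n$ can permute the central projections $p_\s$ transitively, since the summands $C(\bT^n_{\te^{(\s)}})$ are generically pairwise non-isomorphic: $\te^{(e)}=0$, so the $\s=e$ summand is commutative, while other summands are genuinely noncommutative tori. (Moreover, even where defined, this map $\Phi$ satisfies $\Delta\circ\Phi=(\id\otimes\Phi)\circ\Delta$, i.e.\ it is a right translation rather than a Hopf $^*$-algebra automorphism, so "invariance of $h$ under Hopf automorphisms" is not the right principle to cite.) The repair is immediate and uses only tools you already employ: apply $(\id\otimes h)$ to \eqref{deltap}, $\Delta(p_\s)=\sum_\t p_\t\otimes p_{\ti\s}$, to obtain $h(p_\s)\jed=\sum_\t h(p_{\ti\s})\,p_\t$; comparing the coefficient of each $p_\t$ gives $h(p_\s)=h(p_{\ti\s})$ for every $\t$, hence all $h(p_\s)$ coincide and, since they sum to $h(\jed)=1$, each equals $\tfrac1{n!}$. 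With this replacement your argument is complete.
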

\begin{proof} Since Haar measure is unique, it is enough to check that functional defined by \eqref{haar} is a Haar measure. 
$$ 
h(\mathds{1})=
\sum_\sigma h(p_\sigma)=
\sum_\s h(x_\s^{\vec{0}})=1.
$$ 
From \eqref{deltax} 
we get 
$$ 
(h\otimes \id)\Delta(x_\sigma^\vr)=\sum_{\tau}\tilde{\psi}_\vr(\tau,\sigma) h(x_\tau^\vr) x^{\tau^{-1}\vr}_{\ti\s}=\delta_{\vr,\vec{0}}\frac{1}{n!}\sum_{\tau} x_{\tau^{-1}\sigma}^{\vec{0}}=\delta_{\vr,\vec{0}}\frac{1}{n!}\sum_\t p_{\ti\s}=\delta_{\vr,\vec{0}}\frac{1}{n!}\mathds{1}=
h(x_\s^\vr)\jed
$$ 
and similarly 
$(\id\otimes h)\Delta(x_\sigma^\vr)=h(x_\s^\vr)\jed.$
\end{proof}
\begin{m}
It follows from above proposition that $h=\bigoplus_\s \phi_\s$, where $\phi_\s$ is a faithful trace on $C(\bT_{\te^{(\s)}}^n)$. Therefore, $h$ is a faithful trace on $A_\te^n$. It means particularly that $\bG_\te$ is of Kac type. 
\end{m}

\subsection{Representations of $\bG_\te$}
If $\s\in S_n$, then we denote $A_{\te,\s}^n=C(\bT_{\te^{(\s)}}^n)$.
For a subgroup $H\subset S_n$ define $A_H=\bigoplus_{\s\in H}A_{\theta,\s}^n$. Clearly, $A_H$ is a subalgebra of $A_{\theta}^n$. For $i,j=1,\ldots,n$, we write $i\sim_H j$ if $i=\sigma(j)$ for some $\s\in H$. Clearly, $\sim_H$ is an equivalence relation. One has
\beq
\label{genAH}
A_H=\mathrm{C}^*\{u_{ij}: \,\text{$i,j=1,\ldots,n$ and $i\sim_Hj$}\} 
\eeq
Let $p_H=\sum_{\sigma\in H}p_\s$. Then $p_H$ is a maximal central projection in $A_H$, i.e. $A_H=p_HA_\te^n$. Moreover, for every $i,j$ such that $i\sim_H j$
\beq
\label{unitarH}
\sum_{k\sim_H i}u_{ik}^{}u_{jk}^*=\delta_{i,j}p_H=\sum_{k\sim_H i}u_{ki}^*u_{kj}^{}
\eeq
Define $\Delta_H:A_H\to A_H\otimes A_H$ on generators described in \eqref{genAH} by 
$$\Delta_H(u_{ij})=\sum_{k\sim_H j}
u_{ik}\otimes u_{kj}$$
It follows from transitivity of the relation $\sim_H$ that $\Delta_H$ is well defined. \begin{pro}
\label{AH}
The pair $(A_H,\Delta_H)$ is a compact quantum group. 

Moreover, a morphism $\pi_H:A\to A_H$ determined by 
$$\pi_H (x_{\sigma,i})=\begin{cases}
x_{\sigma,i} & \mbox{if $\s\in H$,} \\
0 & \mbox{otherwise}
\end{cases}$$
is a surjective morphism of Hopf C$^*$-algebras.
\end{pro}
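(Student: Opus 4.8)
The plan is to establish that $(A_H, \Delta_H)$ is a compact quantum group by verifying the axioms of Definition~\ref{cqg}, and then to check that $\pi_H$ is a Hopf $^*$-algebra morphism. For the first part, I would first confirm that $\Delta_H$ is a well-defined unital $^*$-homomorphism. Since $A_H = p_H A_\te^n$ is a direct summand, one can realize $\Delta_H$ either intrinsically via the generators $u_{ij}$ with $i \sim_H j$ (using that $\sim_H$ is transitive so the sum $\sum_{k \sim_H j} u_{ik} \otimes u_{kj}$ makes sense and lands in $A_H \otimes A_H$), or — more conveniently — as the compression $\Delta_H(a) = (p_H \otimes p_H)\Delta(a)$ for $a \in A_H$. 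I would check that these two descriptions agree on generators and that $p_H \otimes p_H$ is central (it is, being a sum of the central projections $p_\s \otimes p_\t$), so that compressing by it yields a $^*$-homomorphism; one must also verify $\Delta(p_H) = \sum_{\s,\t\in H,\ \ti\s \in H} p_\t \otimes p_{\ti\s} \cdot(\text{something})$ restricts correctly — here one uses \eqref{deltap}, namely $\Delta(p_\s) = \sum_\t p_\t \otimes p_{\ti\s}$, together with the fact that $H$ is a subgroup so $\t, \ti\s \in H$ exactly when we stay inside $A_H$; this shows $\Delta_H(p_H) = \sum_{\s\in H} p_\s\otimes p_H \cdot(\ldots)$ is handled cleanly.

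Next I would verify coassociativity of $\Delta_H$: this follows immediately from coassociativity of $\Delta$ on $A_\te^n$ (Proposition~\ref{cqgte}) once $\Delta_H$ is identified with the compression, since $(p_H\otimes p_H\otimes p_H)$ commutes with everything and $(\Delta\otimes\id)\Delta = (\id\otimes\Delta)\Delta$ descends. For the density conditions (condition (2) of Definition~\ref{cqg}), the cleanest route is to observe that $A_H$ is generated as a C$^*$-algebra by the entries $u_{ij}$, $i\sim_H j$, which assemble into matrices that are unitary in the appropriate ``block'' sense: by \eqref{unitarH}, for each fixed equivalence class $C$ of $\sim_H$ the matrix $(u_{ij})_{i,j\in C}$ is unitary with entries in $p_C A_\te^n$ where $p_C = \sum_{\s: \s(j)\in C} p_\s$ restricted appropriately. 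One then invokes the standard argument (as in the proof of Proposition~\ref{cqgte}, citing \cite{MD98}) that a C$^*$-algebra generated by the entries of invertible/unitary matrices with a comultiplication of the displayed form automatically satisfies the cancellation/density axioms. Alternatively, one can use the existence of the counit $\ep_H$ (the restriction of $\ep$, which makes sense because $e\in H$ so $p_e \le p_H$) and coinverse $\kappa_H$ (the restriction of $\kappa$; note $\kappa(x_{\s,i}) = x_{\ti,\si(i)}^*$ and $\ti\in H$ when $\s\in H$) to show $(A_H,\Delta_H)$ is a Hopf $^*$-algebra at the dense subalgebra level, which implies the density conditions.

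For the morphism statement, I would check that $\pi_H$ is well defined: it is the quotient map onto the direct summand $A_H = \bigoplus_{\s\in H} A_{\te,\s}^n$ of $A_\te^n = \bigoplus_{\s\in S_n} A_{\te,\s}^n$, hence automatically a surjective unital $^*$-homomorphism. Compatibility with comultiplications, $(\pi_H \otimes \pi_H)\circ\Delta = \Delta_H \circ \pi_H$, is verified on the generators $u_{ik}$: the left side gives $\sum_j \pi_H(u_{ij})\otimes\pi_H(u_{jk})$, and $\pi_H(u_{ij}) = \sum_{\s\in H,\ \s(j)=i} x_{\s,i}$ is nonzero only when $i\sim_H j$, so the surviving terms are exactly those with $j\sim_H i$ (equivalently $j\sim_H k$, using that $i\sim_H k$ whenever both are needed), matching $\Delta_H(\pi_H(u_{ik}))$. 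Preservation of counit and coinverse follows similarly on generators.

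The main obstacle I expect is bookkeeping around which indices survive under $\pi_H$ and the interplay between the equivalence relation $\sim_H$, the block structure of the matrix $(u_{ij})$, and the central projection $p_H$ — in particular, making rigorous the claim that the density axioms hold for $A_H$ despite $(u_{ij})_{i,j=1}^n$ no longer being a single unitary matrix over $A_H$ (its off-block entries vanish), which is why I would lean on the counit/coinverse route rather than a direct density argument.
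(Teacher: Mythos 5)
Your proposal is correct and covers all the required checks; the only substantive difference from the paper lies in how the density axioms are obtained. The paper's proof is extremely terse: coassociativity is declared obvious, for density it observes that by \eqref{unitarH} the matrix $(u_{ij})_{i\sim_H j}$ is invertible in $M_n(A_H)$, so the same standard argument as in Proposition \ref{cqgte} (via \cite{MD98}) applies, and the morphism part is left as a straightforward calculation. You instead lean on restricting the counit and coinverse ($\varepsilon(x_{\s,i})=\delta_{\s,e}$ with $e\in H$, and $\kappa(x_{\s,i})=x_{\si,\si(i)}^*$ with $\si\in H$ since $H$ is a subgroup) to obtain a dense Hopf $^*$-subalgebra of $A_H$, which indeed yields the density conditions --- a valid, slightly longer route. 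Note, however, that the worry motivating this detour is unfounded: in $A_H$ the off-block entries vanish, $p_H u_{ij}=0$ whenever $i\not\sim_H j$, so the full matrix $(p_H u_{ij})_{i,j=1}^n$ is block diagonal with blocks indexed by the $\sim_H$-classes, and \eqref{unitarH} says each block is unitary over $A_H$ (with unit $p_H$); hence the whole matrix is unitary in $M_n(A_H)$ and the paper's direct density argument goes through verbatim. Your compression picture $\Delta_H(a)=(p_H\otimes p_H)\Delta(a)$, justified by centrality of $p_H\otimes p_H$ and by \eqref{deltap} together with the subgroup property of $H$, and your generator-by-generator verification of $(\pi_H\otimes\pi_H)\circ\Delta=\Delta_H\circ\pi_H$ (where transitivity of $\sim_H$ kills exactly the terms that should vanish) are both fine and make explicit what the paper leaves implicit.
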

\begin{proof}
Coassociativity of $\Delta_H$ is obvious. Since \eqref{unitarH} is satisfied, the matrix $(u_{ij})_{i\sim_Hj}$ is invertible in $M_n(A_H)$. Hence the density conditions in Definition \ref{cqg} are also satisfied. The rest follows from straightforward calculations.
\end{proof}
\begin{pro}
Let $\iota_H: C(H)\to A_H$ be a map given by $\iota_H(\delta_\s)=p_\s$ for $\s\in H$, and let $\pi_H:A_H\to C(\bT^n)$ be determined by $\pi_H(x_{\s,i})=x_i$ for $\s\in H$, $i=1,\ldots,n$ where $x_i$ are standard generators of $C(\bT^n)$. Then $\iota_H$ is an injective morphism of Hopf C*-algebras, while $\pi_H$ is a surjective one. 

Moreover,
$0\longrightarrow C(H)\stackrel{\iota_H}{\longrightarrow} A_H \stackrel{\pi_H}{\longrightarrow}  C(\bT^n)\longrightarrow 0$
is an exact sequence.
\end{pro}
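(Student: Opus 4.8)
The plan is to verify the ingredients of an exact sequence of Hopf C$^*$-algebras: $\iota_H$ is an injective Hopf $^*$-morphism, $\pi_H$ is a surjective one, $\pi_H\circ\iota_H$ is trivial, and --- the substantive point --- $\iota_H(C(H))$ equals the subalgebra $A_H^{\mathrm{co}\pi_H}=\{a\in A_H:(\id\otimes\pi_H)\Delta_H(a)=a\otimes\jed\}$ of $\pi_H$-coinvariants, together with $\ker\pi_H=\overline{A_H\,\iota_H(C(H)^+)}$.

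For $\iota_H$ I would argue as follows. By Proposition \ref{cent} applied inside $A_H=p_HA_\te^n$, the projections $p_\s$, $\s\in H$, are mutually orthogonal, central, and sum to $p_H=\jed_{A_H}$, so $\delta_\s\mapsto p_\s$ extends to a unital $^*$-homomorphism $C(H)=\bigoplus_{\s\in H}\bC\delta_\s\to A_H$; it is injective because the $p_\s$ are nonzero and pairwise orthogonal. Compatibility with the Hopf structure is checked on the $p_\s$: the morphism $A_\te^n\to A_H$ of Proposition \ref{AH} is a Hopf $^*$-morphism which is the identity on $A_H$, so restricting \eqref{deltap} gives $\Delta_H(p_\s)=\sum_{\t\in H}p_\t\otimes p_{\ti\s}=(\iota_H\otimes\iota_H)\Delta_{C(H)}(\delta_\s)$ (the terms with $\t\notin H$ vanish, and $\t\in H\Leftrightarrow\ti\s\in H$ since $H$ is a subgroup); moreover $\varepsilon(p_\s)=\delta_{\s,e}$ is immediate from $\varepsilon(x_{\s,i})=\delta_{\s,e}$, and $\kappa$ sends the $\s$-summand onto the $\si$-summand because $\kappa(x_{\s,i})=x_{\si,\si(i)}^*$, whence $\kappa(p_\s)=p_{\si}$. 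So $\iota_H$ is a Hopf $^*$-morphism, and its image, being central, is a normal Hopf $^*$-subalgebra.

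For $\pi_H$: it is the canonical surjection of $A_H=\bigoplus_{\s\in H}C(\bT_{\te^{(\s)}}^n)$ onto the summand indexed by the unit $e$, which equals $C(\bT^n)$ since $\te^{(e)}_{ij}=\te_{ji}+\te_{ij}=0$; equivalently $\pi_H=\pi_{\{e\}}|_{A_H}$ in the notation of Proposition \ref{AH}, so it is a surjective Hopf $^*$-morphism, with $\pi_H(x_{e,i})=x_i$ and $\pi_H(x_{\s,i})=0$ for $\s\neq e$, i.e. $\pi_H(u_{ik})=\delta_{ik}x_i$. In particular $\pi_H(p_\s)=\delta_{\s,e}\jed$, so $\pi_H\circ\iota_H=\varepsilon_{C(H)}(\cdot)\jed$: the composite is trivial.

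It remains to identify $A_H^{\mathrm{co}\pi_H}$; since the Haar state of $\bG_\te$ is faithful it suffices to work in the dense Hopf $^*$-subalgebra $\mathrm{span}\{x_\s^\vr:\s\in H,\,\vr\in\bZ^n\}$. Restricting \eqref{deltax} as above yields $\Delta_H(x_\s^\vr)=\sum_{\t\in H}\tilde{\psi}_\vr(\t,\s)\,x_\t^\vr\otimes x_{\ti\s}^{\ti\vr}$ for $\s\in H$, and applying $\id\otimes\pi_H$ annihilates every term with $\ti\s\neq e$, leaving $(\id\otimes\pi_H)\Delta_H(x_\s^\vr)=\tilde{\psi}_\vr(\s,\s)\,x_\s^\vr\otimes x^{\si\vr}=x_\s^\vr\otimes x^{\si\vr}$, where $x^{\si\vr}$ is the corresponding monomial in $C(\bT^n)$ and $\tilde{\psi}_\vr(\s,\s)=1$ by \eqref{psi-tilde}. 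Hence $x_\s^\vr$ is $\pi_H$-coinvariant iff $x^{\si\vr}=\jed$, i.e. iff $\vr=\vec0$, in which case $x_\s^{\vec0}=p_\s$; so the coinvariants in the dense subalgebra are exactly $\mathrm{span}\{p_\s:\s\in H\}=\iota_H(C(H))$, and this passes to the closure via the conditional expectation $a\mapsto(\id\otimes h_{C(\bT^n)})(\id\otimes\pi_H)\Delta_H(a)$. Finally $C(H)^+=\mathrm{span}\{\delta_\s:\s\neq e\}$, so $\overline{A_H\,\iota_H(C(H)^+)}=\sum_{\s\neq e}A_Hp_\s=\bigoplus_{\s\in H,\,\s\neq e}C(\bT_{\te^{(\s)}}^n)=\ker\pi_H$. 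I expect the main obstacle to be the bookkeeping in this last step --- getting the restricted $\Delta_H$ right on the monomial basis with the cocycle $\tilde{\psi}_\vr$ tracked correctly --- together with, conceptually, pinning down the correct notion of exact sequence of Hopf C$^*$-algebras: it is the Hopf-theoretic (coinvariant/quotient) one, not plain C$^*$-ideal exactness, since $\iota_H(C(H))$ is only a subalgebra of $A_H$, not an ideal.
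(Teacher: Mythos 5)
Your proposal is correct and takes essentially the route the paper compresses into ``follows from \eqref{deltap} and straightforward calculations'': orthogonality and centrality of the $p_\s$ for $\iota_H$, the restricted comultiplication formulas \eqref{deltap} and \eqref{deltax} (with $\tilde{\psi}_\vr(\s,\s)=1$) for the Hopf-morphism property and the coinvariance computation, and the Hopf-algebraic (coinvariants/quotient) notion of exactness, which is indeed the one intended here. Note that you have, rightly, read $\pi_H$ as the projection onto the $e$-summand, i.e.\ $\pi_H(x_{\s,i})=\delta_{\s,e}x_i$ and hence $\pi_H(u_{ik})=\delta_{ik}x_i$, rather than the literal formula in the statement, which cannot define a $^*$-homomorphism since $x_{\s,i}x_{\t,j}=0$ for $\s\neq\t$ while $x_ix_j\neq 0$; this corrected map is the restriction to the quantum subgroup $\bT^n$ and is the one for which the sequence is exact.
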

\begin{proof}
Directly follows from \eqref{deltap} and straightforward calculations.
\end{proof}
\begin{m}
Application of the above proposition for $H=S_n$ yields the following short exact sequence of compact quantum groups
$$0\longrightarrow \bT^n \longrightarrow \bG_\te \longrightarrow S_n \longrightarrow 0.$$
In particular $\bT^n$ is a quantum normal subgroup of $(A_\theta^n,\Delta)$ in the sense of \cite{Wan14}, while $S_n$ is a quotient quantum group. Thus, the purely quantum group 
$\bG_\te$ can be considered as an extension of the classical group $\bT^n$ by the classical group $S_n$ (compare \cite{VW}).
\end{m}

Our aim is to describe irreducible representations of $\bG_\te$. Namely, we will show the following theorem.
\begin{thm}
\label{reps}
Irreducible representations of the quantum group $\bG_\te$ are in one-to-one correspondence with irreducible representations of the classical group $\bT^n\rtimes S_n$.
\end{thm}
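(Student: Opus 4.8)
The plan is to run a quantum analogue of Mackey's little-group method, using the short exact sequence $0\to\bT^n\to\bG_\te\to S_n\to 0$ (the Remark after Proposition~\ref{AH}), in which the normal subgroup $\bT^n$ and the quotient $S_n$ are both classical. First I would reduce to a single orbit. Let $S_n$ act on the character group $\bZ^n$ of $\bT^n$ by $\s\vr=(r_{\si(1)},\dots,r_{\si(n)})$; its orbits $O$ are the multisets of $n$ integers. For each orbit set $W_O=\mathrm{span}\{x_\s^\vr:\s\in S_n,\ \vr\in O\}\subset\cA_\te^n$. Formula \eqref{deltax} shows that each $W_O$ is a finite-dimensional subcoalgebra with $\dim W_O=n!\,|O|$, and that $\cA_\te^n=\bigoplus_O W_O$. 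On the other hand, since $\pi_{S_n}\colon A_\te^n\to C(\bT^n)$ is a surjection of Hopf $*$-algebras and $\bT^n$ is a \emph{normal} quantum subgroup, a Clifford-type argument (together with \eqref{deltax}) shows that for every irreducible representation $\pi$ of $\bG_\te$ the set of characters occurring in $\pi|_{\bT^n}$ is a single $S_n$-orbit $O_\pi$, the corresponding weight spaces of $H_\pi$ all have a common dimension, and the coefficient coalgebra of $\pi$ is contained in $W_{O_\pi}$. Combined with Peter--Weyl this gives $\mathrm{Irr}(\bG_\te)=\bigsqcup_O\mathrm{Irr}_O$, where $\mathrm{Irr}_O$ collects the irreducibles with coefficients in $W_O$ and $W_O=\bigoplus_{\pi\in\mathrm{Irr}_O}C(\pi)$. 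Since the classical Mackey machine produces exactly the same orbit decomposition for $\bT^n\rtimes S_n$, it suffices to construct, for each $O$, a dimension-preserving bijection $\mathrm{Irr}_O(\bG_\te)\leftrightarrow\mathrm{Irr}_O(\bT^n\rtimes S_n)$.

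Next I would treat a fixed orbit via the little group. Fix $O$, a representative $\vr_0\in O$, and the stabiliser $H=\{\s\in S_n:\s\vr_0=\vr_0\}$, so $|O|=[S_n:H]$ and $\dim W_O=n!\,[S_n:H]=(n!)^2/|H|$. Inside $\bG_\te$ sits the quantum subgroup $(A_H,\Delta_H)$ of Proposition~\ref{AH}, itself an extension $0\to\bT^n\to(A_H,\Delta_H)\to H\to 0$. The aim is to show that the one-dimensional character $\chi_{\vr_0}$ of $\bT^n$ extends, for every irreducible representation $\rho$ of the finite group $H$, to an irreducible representation $\widehat\rho$ of $(A_H,\Delta_H)$ of dimension $\dim\rho$ with $\widehat\rho|_{\bT^n}=(\dim\rho)\,\chi_{\vr_0}$, and that every irreducible representation of $(A_H,\Delta_H)$ supported on $\{\vr_0\}$ arises this way; i.e.\ $\widehat\rho\mapsto\rho$ is a bijection onto $\mathrm{Irr}(H)$. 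The mechanism is that the identity $r_{\s(i)}=r_i$ valid for $\s\in H$ forces, through \eqref{psi-tilde}, the ``weight-$\vr_0$'' part of $\Delta_H$ to coincide with the comultiplication of $C(H)$ up to a scalar $2$-cocycle $\beta_{\vr_0}\in Z^2(H;\bT)$ read off from $\tilde\psi_{\vr_0}|_{H\times H}$; the substance of this step is the vanishing $[\beta_{\vr_0}]=0$ in $H^2(H;\bT)$, so that $\chi_{\vr_0}$ extends honestly rather than merely projectively.

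Then I would induce and count. For $\rho\in\mathrm{Irr}(H)$ put $\pi_{O,\rho}=\mathrm{Ind}\,\widehat\rho$, the representation of $\bG_\te$ induced from $(A_H,\Delta_H)$ along the Hopf $*$-algebra surjection $\pi_H\colon A_\te^n\to A_H$; it has dimension $[S_n:H]\dim\rho$ and is supported on $O$. Mimicking the classical argument: (a) $\pi_{O,\rho}$ is irreducible, using Frobenius reciprocity $\mathrm{End}_{\bG_\te}(\pi_{O,\rho})\cong\mathrm{Hom}_{(A_H,\Delta_H)}(\widehat\rho,\mathrm{Res}\,\mathrm{Ind}\,\widehat\rho)$ and a Mackey intertwining formula which, by Step~2 and transitivity of the $S_n$-action on $O$, collapses to $\mathrm{End}_H(\rho)=\bC$; (b) $\pi_{O,\rho}\cong\pi_{O,\rho'}$ forces $\rho\cong\rho'$ by the same computation; (c) these exhaust $\mathrm{Irr}_O(\bG_\te)$ by a dimension count, $\sum_{\rho\in\mathrm{Irr}(H)}(\dim\pi_{O,\rho})^2=[S_n:H]^2\sum_\rho(\dim\rho)^2=[S_n:H]^2|H|=(n!)^2/|H|=\dim W_O=\sum_{\pi\in\mathrm{Irr}_O}(\dim\pi)^2$. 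Hence $\mathrm{Irr}_O(\bG_\te)\cong\mathrm{Irr}(H)$ with $\dim\pi_{O,\rho}=[S_n:H]\dim\rho$, which is precisely the classical little-group answer for $\mathrm{Irr}_O(\bT^n\rtimes S_n)$; assembling over all orbits $O$ yields the asserted bijection.

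The main obstacle is Step~2: organising ``induction'', ``restriction'' and the little-group representation for the \emph{non-split} quantum extension $(A_H,\Delta_H)$, and above all proving $[\beta_{\vr_0}]=0$. Concretely this says that although $\bG_\te$ (already $(A_H,\Delta_H)$) is a genuinely noncommutative deformation of $\bT^n\rtimes S_n$, the deformation --- a $2$-cocycle/Rieffel deformation along the torus --- changes only the tensor structure of $\mathrm{Rep}$, not the underlying category; it is here that the explicit arithmetic of $\te$ must enter, via the relation $r_{\s(i)}=r_i$ on the stabiliser together with \eqref{psi-tilde}. Alternatively, one could shortcut Steps~2--3 by invoking the general principle that a $2$-cocycle deformation of a compact quantum group has the same irreducible representations, with the same dimensions, as the original.
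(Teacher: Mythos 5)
Your roadmap is essentially the paper's: decompose over the $S_n$-orbits in $\bZ^n$, extend the character $\vr_0$ of $\bT^n$ to the stabiliser quantum subgroup $(A_{H_{\vr_0}},\Delta_{H_{\vr_0}})$, induce an irreducible representation of $H_{\vr_0}$ up to $\bG_\te$, and match the resulting parametrisation with the classical Mackey picture for $\bT^n\rtimes S_n$; your exhaustion-by-dimension count inside each subcoalgebra $W_O$ is a legitimate (and arguably cleaner) variant of the paper's spanning argument via the regular representation of $H_\vr$. The difficulty is that the two points you yourself flag as the ``main obstacle'' are precisely where the content of the theorem sits, and you leave them unproved. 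The vanishing $[\beta_{\vr_0}]=0$ is not a soft fact: the stabilisers $H_\vr$ are Young subgroups, whose Schur multipliers are in general nontrivial, so no abstract cohomological principle applies; moreover, a nontrivial class would not be detected by your counting step, since the twisted group algebra $\bC_{\beta}[H_{\vr_0}]$ also has dimension $|H_{\vr_0}|$ and is semisimple, so your Step 3(c) would still ``close'' --- but then $\mathrm{Irr}_O(\bG_\te)$ would be parametrised by $\beta$-projective irreducibles of $H_{\vr_0}$, whose number and dimensions need not match the ordinary ones used on the classical side, and the asserted bijection with $\mathrm{Irr}(\bT^n\rtimes S_n)$ would fail. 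The paper settles this by an explicit computation: it introduces $\vartheta_\vr(\s,\t)=\prod_{\substack{i<j\\ \s(i)>\s(j)}}\om_{\ti(j),\ti(i)}^{\,r_{\s(i)}r_{\s(j)}}$, proves the twisted multiplicativity $\vartheta_\vr(\s\t,\rho)=\vartheta_\vr(\s,\t\rho)\vartheta_\vr(\t,\rho)$ for $\s\in H_\vr$ (Lemma \ref{alfal}), and deduces $\varphi_\vr(\s)\varphi_\vr(\t)=\varphi_\vr(\s\t)\psi_\vr(\s,\t)$ with $\varphi_\vr(\s)=\overline{\vartheta_\vr(\s,e)}$ (Proposition \ref{kocykl}); only this yields the honest group-like element $\chi_\vr=\sum_{\s\in H_\vr}\varphi_\vr(\s)u_\s^\vr$ extending the character $\vr$.

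The second gap is the induction machinery itself. Frobenius reciprocity and a Mackey intertwining formula for corepresentations of a non-split, genuinely noncommutative extension are invoked by analogy but never constructed; the paper sidesteps any general theory by building the induced object concretely, $\tilde K_{\vr,v}=\bigl\{\sum_{\s\in S_n}\lambda(\s)\otimes u_\s^\vr:\lambda\in F_{H_\vr,v}\bigr\}\subset K\otimes A_\te^n$ with $\tilde\eta_{\vr,v}=\id_K\otimes\Delta$, computing its matrix coefficients (Lemma \ref{lema}) and verifying irreducibility, inequivalence and exhaustion by hand from formula \eqref{amu}. Your counting argument can replace the paper's exhaustion step, but only after irreducibility and mutual inequivalence of the induced corepresentations are actually established, which your sketch does not do. Finally, the proposed shortcut via ``2-cocycle deformations have the same corepresentation theory'' would itself require proving that $A_\te^n$ is a cocycle twist of $C(\bT^n\rtimes S_n)$; the paper only raises the Rieffel-deformation picture as a speculation in its closing remarks, and you do not supply such an identification either. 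So: right strategy, but the decisive computations --- the coboundary property on the stabiliser and the explicit construction and analysis of the induced representations --- are missing.
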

\begin{m}
\label{remMac}
According to Mackey theory (\cite{Mac}), each irreducible representation of the semidirect product of the abelian group $\bT^n$ by $S_n$ can be characterized as an induced representation of certain representation of "little subgroup". For readers convenience let us remind basic ingredients of this construction. Consider the natural action of $S_n$ on $\bZ^n$ (which is a dual to $\bT^n$), fix some orbit $O_\kappa$ of this action and its representative $\vr_\kappa$. One can canonically extend the character $\vr_\kappa$ to a character of $\bT^n\times H_{\vr_\kappa}\subset \bT^n\rtimes S_n$. Then choose some irreducible representation $v$ of the stabilizer group $H_{\vr_\kappa}\subset S_n$, extend it to a representation $\tilde{v}$ of $\bT^n\times H_\vr\subset \bT^n\rtimes S_n$ and take a representation $\eta$ being a tensor product $\eta$ of the extended character $\vr_\kappa$ and $\tilde{v}$. The induced representation $\tilde{\eta}_{{\vr_\kappa},v}$ of $\eta$ turns out to be an irreducible representation of $\bT^n\rtimes S_n$. Two such representations $\tilde{\eta}_{\vr_\kappa,v}$ and $\tilde{\eta}_{\vr_{\kappa'},v'}$ are equivalent if and only if $\kappa=\kappa'$ and $v$, $v'$ are equivalent. Moreover, each irrediucible representation of $\bT^n\rtimes S_n$ is equivalent to $\tilde{\eta}_{\vr_\kappa,v}$ for some $\kappa$ and $v$.
\end{m}
Now, let us fix $\vr\in\bZ^n$. Let $H_{\vec{r}}\subset S_n$ be the stabilizer of $\vec{r}$, i.e. $\sigma\in H_{\vec{r}}$ if and only if $\sigma\vec{r}=\vec{r}$.
The immediate consequence of \eqref{deltau} is the following equality
\beq
\label{DeltaHu}
\Delta_{H_\vr}(u_\s^\vr)=\sum_{\t\in H_\vr}\tilde{\psi}_\vr(\t,\s)u_\t^\vr\otimes u_{\t^{-1}\s}^\vr
\eeq
for every $\s\in H_\vr$.

Let us consider the following function $\vartheta_\vr:H_\vr\times H_\vr\to\bC$ 
\beq
\vartheta_\vr(\s,\t)=\prod_{\substack{i<j \\\s(i)>\s(j)}}\left(\om_{\ti(j),\ti(i)}\right)^{r_{\s(i)}r_{\s(j)}},\qquad \s,\t\in S_n.
\eeq
We describe some properties of $\vartheta_\vr$.
\begin{lm}
\label{alfal}
For every $\s\in H_\vr$ and $\t,\rho\in S_n$,
\beq
\label{alfa_lem}
\vartheta_\vr(\s\t,\rho)=\vartheta_\vr(\s,\t\rho)\vartheta_\vr(\t,\rho).
\eeq
\end{lm}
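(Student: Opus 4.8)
The plan is to verify the identity \eqref{alfa_lem} by a direct computation comparing the defining products of $\vartheta_\vr$ on both sides, keeping careful track of which pairs $(i,j)$ contribute an inversion. Recall that for $\s\in H_\vr$ we have $r_{\s(i)}r_{\s(j)}$ insensitive to reindexing by $\s$ in the sense that the multiset of exponents matches the multiset $\{r_k r_l\}$; this is precisely where the hypothesis $\s\in H_\vr$ will be used. First I would rewrite $\vartheta_\vr(\s\t,\rho)$: its inversion set is $\{(i,j):i<j,\ \s\t(i)>\s\t(j)\}$, and each such pair contributes the factor $(\om_{\rho^{-1}(j),\rho^{-1}(i)})^{r_{\s\t(i)}r_{\s\t(j)}}$. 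The standard cocycle-type decomposition of the inversion set of a product of permutations is the combinatorial heart of the argument: a pair $(i,j)$ with $i<j$ is an inversion for $\s\t$ if and only if exactly one of the following holds --- either $(i,j)$ is an inversion for $\t$ (i.e. $\t(i)>\t(j)$) and $(\t(j),\t(i))$ is \emph{not} an inversion for $\s$, or $(i,j)$ is not an inversion for $\t$ and $(\t(i),\t(j))$ \emph{is} an inversion for $\s$.

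The second step is to match the two cases with the two factors on the right-hand side. For pairs $(i,j)$ with $i<j$ and $\t(i)>\t(j)$ (an inversion of $\t$), set $i'=\t(j)<j'=\t(i)$; then $(i',j')$ contributes to $\vartheta_\vr(\t,\rho)$ the factor $(\om_{\rho^{-1}(j'),\rho^{-1}(i')})^{r_{\t(i)}r_{\t(j)}}=(\om_{\rho^{-1}\t(i),\rho^{-1}\t(j)})^{r_{\t(i)}r_{\t(j)}}$. For pairs where additionally $\s(i')>\s(j')$ fails, i.e. $\s\t(j)<\s\t(i)$, the pair $(i,j)$ is then an inversion of $\s\t$ and contributes to $\vartheta_\vr(\s\t,\rho)$ the factor with base $\om_{\rho^{-1}(j),\rho^{-1}(i)}$ and exponent $r_{\s\t(i)}r_{\s\t(j)}$. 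Meanwhile the pair $(i',j')=(\t(j),\t(i))$ enters $\vartheta_\vr(\s,\t\rho)$ (when it \emph{is} an inversion of $\s$) with base $\om_{(\t\rho)^{-1}(j'),(\t\rho)^{-1}(i')}=\om_{\rho^{-1}\t^{-1}\t(i),\rho^{-1}\t^{-1}\t(j)}=\om_{\rho^{-1}(i),\rho^{-1}(j)}$. Using antisymmetry of $\theta$, that is $\om_{ab}=\overline{\om_{ba}}$, together with $|\om_{ab}|=1$, one checks the bases from the two right-hand factors combine to $\om_{\rho^{-1}(j),\rho^{-1}(i)}$ when present, matching the left-hand side; the exponents agree because $\s\in H_\vr$ forces $r_{\s(a)}=r_a$ after suitable relabelling, so $r_{\s\t(i)}r_{\s\t(j)}=r_{\t(i)}r_{\t(j)}=r_{\s(i')}r_{\s(j')}$.

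The third step is the complementary case: pairs $(i,j)$ with $i<j$ and $\t(i)<\t(j)$ (not an inversion of $\t$). Such pairs contribute nothing to $\vartheta_\vr(\t,\rho)$, and set $i''=\t(i)<j''=\t(j)$; the pair $(i'',j'')$ is an inversion of $\s$ precisely when $\s\t(i)>\s\t(j)$, i.e. when $(i,j)$ is an inversion of $\s\t$. In that subcase it contributes to $\vartheta_\vr(\s,\t\rho)$ the factor with base $\om_{(\t\rho)^{-1}(j''),(\t\rho)^{-1}(i'')}=\om_{\rho^{-1}(j),\rho^{-1}(i)}$ and exponent $r_{\s(i'')}r_{\s(j'')}=r_{\s\t(i)}r_{\s\t(j)}$, matching exactly the factor from $\vartheta_\vr(\s\t,\rho)$. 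Assembling all cases, every factor of $\vartheta_\vr(\s\t,\rho)$ appears exactly once on the right, and conversely; no extra factors survive. I expect the main obstacle to be the bookkeeping of signs/inverses in the base $\om$-arguments when passing through $(\t\rho)^{-1}=\rho^{-1}\t^{-1}$ and applying $\om_{ab}=\overline{\om_{ba}}$ in the case analysis --- it is routine but easy to slip --- and the careful invocation of $\s\in H_\vr$ (equivalently $r_{\s(k)}=r_k$ up to the permutation action) to identify the exponents; once the inversion-set trichotomy above is set up cleanly, everything else is mechanical.
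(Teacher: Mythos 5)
Your overall plan --- decompose the inversion set of $\sigma\tau$ via the trichotomy in your first step, match factors pair by pair, cancel with skew-symmetry, and use $\sigma\in H_{\vr}$ for the exponents --- is exactly the mechanism of the paper's proof (which re-indexes the product for $\vartheta_\vr(\sigma,\tau\rho)$ by the substitution $i\mapsto\tau(j)$, $j\mapsto\tau(i)$ and cancels the doubly-inverted pairs using $\omega_{ab}\omega_{ba}=1$). However, the bookkeeping in your second step is wrong as written, and that is precisely where the content of the verification lies. By the definition, the factor of $\vartheta_\vr(\tau,\rho)$ attached to an inversion $(i,j)$ of $\tau$ (so $i<j$, $\tau(i)>\tau(j)$) is $(\omega_{\rho^{-1}(j),\rho^{-1}(i)})^{r_{\tau(i)}r_{\tau(j)}}$: the product is indexed by the pairs $(i,j)$ themselves, not by their images. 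You instead attribute to it the factor $(\omega_{\rho^{-1}(j'),\rho^{-1}(i')})^{r_{\tau(i)}r_{\tau(j)}}=(\omega_{\rho^{-1}\tau(i),\rho^{-1}\tau(j)})^{r_{\tau(i)}r_{\tau(j)}}$ with $(i',j')=(\tau(j),\tau(i))$, which is a different quantity. Moreover, your concluding claim in that step --- that when both right-hand factors are present their bases combine to $\omega_{\rho^{-1}(j),\rho^{-1}(i)}$, ``matching the left-hand side'' --- has the logic of your own trichotomy backwards: if $(i,j)$ is an inversion of $\tau$ \emph{and} $(\tau(j),\tau(i))$ is an inversion of $\sigma$, then $(i,j)$ is \emph{not} an inversion of $\sigma\tau$, so the left-hand side contributes nothing for this pair; what must (and does) happen is that the two right-hand factors cancel, namely $\bigl(\omega_{\rho^{-1}(j),\rho^{-1}(i)}\,\omega_{\rho^{-1}(i),\rho^{-1}(j)}\bigr)^{r_{\tau(i)}r_{\tau(j)}}=1$ by skew-symmetry, the exponents being identified via $r_{\sigma(i')}r_{\sigma(j')}=r_{i'}r_{j'}=r_{\tau(i)}r_{\tau(j)}$ since $\sigma\in H_\vr$. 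In the complementary sub-case, where $(\tau(j),\tau(i))$ is not an inversion of $\sigma$, only the (correctly computed) $\vartheta_\vr(\tau,\rho)$-factor appears on the right and it already equals the left-hand factor, because $\sigma\in H_\vr$ gives $r_{\sigma\tau(i)}r_{\sigma\tau(j)}=r_{\tau(i)}r_{\tau(j)}$; no combination of bases is involved there.

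With that factor corrected and the two sub-cases matched as above, your argument closes: the trichotomy in your first step is correct, and your third step (the $\tau$-non-inversion case, where the $\vartheta_\vr(\sigma,\tau\rho)$-factor with pair $(\tau(i),\tau(j))$ has base $\omega_{\rho^{-1}(j),\rho^{-1}(i)}$ and exponent $r_{\sigma\tau(i)}r_{\sigma\tau(j)}$, matching the left-hand factor directly) is already right. So the gap is local and fixable, but as stated the middle case of your verification does not go through: the factor you assign to $\vartheta_\vr(\tau,\rho)$ is not the one the definition produces, and the asserted matching with the left-hand side fails exactly in the situation where the skew-symmetry cancellation is needed.
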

\begin{proof}
The right hand side of \eqref{alfa_lem} is given by
\beg
\vartheta_\vr(\sigma,\tau\rho)\vartheta_\vr(\t,\rho)
&=&\prod_{\substack {i<j\\ \s(i)>\s(j)}}\om_{\rho^{-1}\ti(i),\rho^{-1}\ti(j)}^{r_{\s(i)}r_{\s(j)}}
\prod_{\substack {i<j\\ \t(i)>\t(j)}} \om_{\rho^{-1}(i),\rho^{-1}(j)}^{r_{\t(i)}r_{\t(j)}} 
\\&=&
\prod_{\substack {\t(j)<\t(i) \\ \s\t(j)>\s\t(i)}}
\omega_{\rho^{-1}(j),\rho^{-1}(i)}^{r_{\t(i)}r_{\t(j)}} 
\prod_{\substack {i<j \\ \t(i)>\t(j)}}
\omega_{\rho^{-1}(i),\rho^{-1}(j)}^{r_{\t(i)}r_{\t(j)}} 
\eeg
The last equality is due to substitution 
$i\mapsto\t(j)$, $j\mapsto\t(i)$ in the first product as well as the fact that $\s\in H_\vr$. Thus we get 
\beg
\vartheta_\vr(\sigma,\tau\rho)\vartheta_\vr(\t,\rho)&=&
\prod_{\substack {i<j \\ \t(i)>\t(j) \\ \s\t(i)<\s\t(j)}}\omega_{\rho^{-1}(j),\rho^{-1}(i)}^{r_{\t(i)}r_{\t(j)}} 
\prod_{\substack {i>j \\ \t(i)>\t(j) \\ \s\t(i)<\s\t(j)}}\omega_{\rho^{-1}(j),\rho^{-1}(i)}^{r_{\t(i)}r_{\t(j)}} \times \\
&& \qquad {} \times
\prod_{\substack {i<j \\ \t(i)>\t(j)\\ \s\t(i)<\s\t(j)}}\om_{\rho^{-1}(i),\rho^{-1}(j)}^{r_{\t(i)}r_{\t(j)}} 
\prod_{\substack {i<j \\ \t(i)>\t(j)\\ \sigma\t(i)>\sigma\t(j)}}\om_{\rho^{-1}(i),\rho^{-1}(j)}^{r_{\t(i)}r_{\t(j)}} .
\eeg
Since $\om_{ji}=\om_{ij}^{-1}$, it reduces to
$$
\vartheta_\vr(\sigma,\tau\rho)\vartheta_\vr(\t,\rho)=
\prod_{\substack {i>j \\ \t(i)>\t(j) \\ \s\t(i)<\s\t(j)}}\omega_{\rho^{-1}(j),\rho^{-1}(i)}^{r_{\t(i)}r_{\t(j)}}
\prod_{\substack {i<j \\ \t(i)>\t(j)\\ \sigma\t(i)>\sigma\t(j)}}\om_{\rho^{-1}(i),\rho^{-1}(j)}^{r_{\t(i)}r_{\t(j)}}.
$$  
After the substitution $i\mapsto j$, $j\mapsto i$ in the first product we get
\beg
\vartheta_\vr(\sigma,\tau\rho)\vartheta_\vr(\t,\rho)&=&
\prod_{\substack {i<j \\ \t(i)<\t(j) \\ \s\t(i)>\s\t(j)}}\omega_{\rho^{-1}(i),\rho^{-1}(j)}^{r_{\t(i)}r_{\t(j)}}
\prod_{\substack {i<j \\ \t(i)>\t(j)\\ \sigma\t(i)>\sigma\t(j)}}\om_{\rho^{-1}(i),\rho^{-1}(j)}^{r_{\t(i)}r_{\t(j)}} \\
&=&
\prod_{\substack {i<j \\ \s\t(i)>\s\t(j)}}\omega_{\rho^{-1}(i),\rho^{-1}(j)}^{r_{\t(i)}r_{\t(j)}} \\
&=&
\vartheta_\vr(\s\t,\rho) .
\eeg
Thus the proof of \eqref{alfa_lem} is completed.
\end{proof}

Observe that the previously defined function $\tilde{\psi}_\vr$ (cf. \eqref{psi-tilde}) satisfies 
\beq
\label{psi-alfa}
\tilde{\psi}_\vr(\s,\t)=\overline{\vartheta_\vr(\s,e)}\vartheta_\vr(\s,\si\t).
\eeq
Consider the following functions $\psi_\vr: S_n\times S_n\to \bT^1$ and $\varphi_\vr:H_\vr\to \bT^1$
\beq
\label{psii}
\psi_\vr(\s,\t)=\tilde{\psi}_\vr(\s,\s\t)=\overline{\vartheta_\vr(\s,e)}\vartheta_\vr(\s,\t), \quad \s,\t\in S_n,
\eeq
\beq
\label{varfi}
\varphi_\vr(\s)=\overline{\vartheta_\vr(\s,e)},\qquad \s\in H_\vr.
\eeq

Let us recall that for a locally compact group $G$ and abelian group $A$, a function $\psi:G\times G\to A$ is called a 2-cocycle, if $\psi(g,e)=\psi(e,g)=1$ for $g\in G$, and
\beq
\label{psi}
\psi(g,h)\psi(gh,k)=\psi(g,hk)\psi(h,k),\qquad g,h,k\in G.
\eeq

\begin{pro}
\label{kocykl}
For every $\sigma,\tau \in H_\vr$, we have
\begin{equation}
\label{psirep}
\varphi_\vr(\sigma)\varphi_\vr(\tau)=\varphi_\vr(\sigma\tau)\psi_\vr(\sigma,\tau).
\end{equation}

Consequently, the function $\psi_\vr$ given by \eqref{psii} is a 2-cocycle on $H_\vr$.
\end{pro}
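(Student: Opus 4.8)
The plan is to read off \eqref{psirep} directly from Lemma~\ref{alfal}, and then to deduce that $\psi_\vr$ is a $2$-cocycle from the fact that \eqref{psirep} exhibits $\psi_\vr$ (up to complex conjugation) as a coboundary of the $\bT^1$-valued function $\varphi_\vr$.

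First I would note that $\vartheta_\vr(\s,\t)$ is a finite product of integer powers of the unimodular numbers $\om_{ij}$, so that $\vartheta_\vr$, and hence $\varphi_\vr$ and $\psi_\vr$, take values in $\bT^1$; in particular inverses may be replaced by complex conjugates throughout. Next I would apply Lemma~\ref{alfal} with $\r=e$: since $\s\in H_\vr$ and $\t\in S_n$, it gives $\vartheta_\vr(\s\t,e)=\vartheta_\vr(\s,\t)\vartheta_\vr(\t,e)$. Combining this with $\psi_\vr(\s,\t)=\overline{\vartheta_\vr(\s,e)}\,\vartheta_\vr(\s,\t)$ and the definition $\varphi_\vr(\s)=\overline{\vartheta_\vr(\s,e)}$ (cf. \eqref{psii}, \eqref{varfi}), we get
\[
\varphi_\vr(\s\t)\,\psi_\vr(\s,\t)=\overline{\vartheta_\vr(\s\t,e)}\;\overline{\vartheta_\vr(\s,e)}\;\vartheta_\vr(\s,\t)=\overline{\vartheta_\vr(\t,e)}\;\overline{\vartheta_\vr(\s,e)}=\varphi_\vr(\s)\,\varphi_\vr(\t),
\]
where the middle step uses $\overline{\vartheta_\vr(\s\t,e)}=\overline{\vartheta_\vr(\s,\t)}\;\overline{\vartheta_\vr(\t,e)}$ together with $|\vartheta_\vr(\s,\t)|=1$. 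This is exactly \eqref{psirep}.

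For the cocycle statement I would rewrite \eqref{psirep} in coboundary form $\psi_\vr(\s,\t)=\varphi_\vr(\s)\varphi_\vr(\t)\overline{\varphi_\vr(\s\t)}$ for $\s,\t\in H_\vr$. The normalization is then immediate: the index set $\{i<j:\ e(i)>e(j)\}$ is empty, so $\vartheta_\vr(e,\cdot)=1$ and $\varphi_\vr(e)=1$, giving $\psi_\vr(\s,e)=\psi_\vr(e,\s)=1$. Finally, substituting the coboundary formula into both sides of the $2$-cocycle identity \eqref{psi} and using that all factors lie in $\bT^1$, each side reduces to $\varphi_\vr(\s)\varphi_\vr(\t)\varphi_\vr(\r)\overline{\varphi_\vr(\s\t\r)}$, so \eqref{psi} holds for $\s,\t,\r\in H_\vr$.

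There is no real obstacle here, since the genuine combinatorial work has already been carried out in Lemma~\ref{alfal}; the only points requiring care are the bookkeeping with complex conjugates (legitimate precisely because $\vartheta_\vr$ is $\bT^1$-valued) and checking that the specialization $\r=e$ is admissible, which it is since Lemma~\ref{alfal} requires only $\s\in H_\vr$, the other two arguments being arbitrary in $S_n$.
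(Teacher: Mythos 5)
Your proof is correct and takes essentially the same route as the paper, which simply declares \eqref{psirep} an obvious consequence of \eqref{psii}, \eqref{varfi} and Lemma~\ref{alfal}; you have just filled in the details by specializing $\rho=e$ in \eqref{alfa_lem} and then using the standard fact that a $\bT^1$-valued coboundary is a normalized $2$-cocycle. No gaps.
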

\begin{proof}
Obvious consequence of definitions \eqref{psii}, \eqref{varfi} and relation \eqref{alfa_lem}.
\end{proof}

Define an element $\chi_\vr\in A_{H_\vr}$ by 
\beq
\label{chik}
\chi_\vr=\sum_{\s\in H_\vr}\varphi_\vr(\s)u_\s^\vr
\eeq
Then we have
\begin{pro}
For any $\vr\in\bZ^n$, $\Delta_{H_\vr}(\chi_\vr)=\chi_\vr\otimes \chi_\vr$, i.e. 
$\chi_\vr$ is a 1-dimensional representation (character) of a quantum group $(A_{H_\vr},\Delta_{H_\vr})$.
\end{pro}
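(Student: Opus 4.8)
The plan is to expand $\Delta_{H_\vr}(\chi_\vr)$ by linearity using the explicit formula \eqref{DeltaHu} for $\Delta_{H_\vr}(u_\s^\vr)$, to reindex the resulting double sum over $H_\vr\times H_\vr$ by the group substitution $\rho=\t^{-1}\s$, and then to collapse the scalar coefficient by means of the cocycle identity \eqref{psirep} from Proposition \ref{kocykl}. First I would write, using \eqref{chik} and \eqref{DeltaHu} (and noting that $\t^{-1}\s\in H_\vr$ whenever $\s,\t\in H_\vr$, so all terms live in $A_{H_\vr}\otimes A_{H_\vr}$),
$$\Delta_{H_\vr}(\chi_\vr)=\sum_{\s\in H_\vr}\varphi_\vr(\s)\,\Delta_{H_\vr}(u_\s^\vr)=\sum_{\s,\t\in H_\vr}\varphi_\vr(\s)\,\tilde{\psi}_\vr(\t,\s)\,u_\t^\vr\otimes u_{\t^{-1}\s}^\vr.$$

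Next, for each fixed $\t\in H_\vr$ I substitute $\rho=\t^{-1}\s$, i.e. $\s=\t\rho$; as $\s$ ranges over $H_\vr$ so does $\rho$, and the sum becomes $\sum_{\t,\rho\in H_\vr}\varphi_\vr(\t\rho)\,\tilde{\psi}_\vr(\t,\t\rho)\,u_\t^\vr\otimes u_\rho^\vr$. By the very definition \eqref{psii} one has $\tilde{\psi}_\vr(\t,\t\rho)=\psi_\vr(\t,\rho)$, so the coefficient of $u_\t^\vr\otimes u_\rho^\vr$ is $\varphi_\vr(\t\rho)\psi_\vr(\t,\rho)$, which equals $\varphi_\vr(\t)\varphi_\vr(\rho)$ by \eqref{psirep}. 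Therefore the double sum factors,
$$\Delta_{H_\vr}(\chi_\vr)=\Big(\sum_{\t\in H_\vr}\varphi_\vr(\t)u_\t^\vr\Big)\otimes\Big(\sum_{\rho\in H_\vr}\varphi_\vr(\rho)u_\rho^\vr\Big)=\chi_\vr\otimes\chi_\vr,$$
as claimed. To conclude that $\chi_\vr$ is genuinely a one-dimensional (hence irreducible) corepresentation I would add the remark that $\chi_\vr\neq 0$: the summand $u_e^\vr$ occurs with coefficient $\varphi_\vr(e)=\overline{\vartheta_\vr(e,e)}=1$, and the elements $u_\s^\vr$ lie in the pairwise orthogonal direct summands $C(\bT_{\te^{(\s)}}^n)=p_\s A_\te^n$, so they are linearly independent; equivalently $\varepsilon_{H_\vr}(\chi_\vr)=1$.

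I do not anticipate a real obstacle here, since everything rests on formulas already established. The only point demanding a little care is the bookkeeping of the reindexing together with the verification that \eqref{psirep}, stated for arguments in $H_\vr$, is applied to the arguments $\t,\rho\in H_\vr$ produced by the substitution — and that the identity $\tilde{\psi}_\vr(\t,\t\rho)=\psi_\vr(\t,\rho)$ is exactly \eqref{psii}; beyond that the computation is purely formal.
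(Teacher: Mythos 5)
Your proposal is correct and follows essentially the same argument as the paper: expand via \eqref{DeltaHu}, reindex by $\rho=\t^{-1}\s$, identify $\tilde{\psi}_\vr(\t,\t\rho)=\psi_\vr(\t,\rho)$, and apply \eqref{psirep} to factor the sum as $\chi_\vr\otimes\chi_\vr$. The extra remark on $\chi_\vr\neq 0$ is a harmless addition not needed in the paper's proof.
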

\begin{proof}
From \eqref{DeltaHu} we have
\beg
\Delta_{H_\vr}(\chi_\vr) &=&
\sum_{\s\in H_\vr}\varphi_\vr(\s)\Delta_{H_\vr}(u_\s^\vr) \\
&=&
\sum_{\s,\t\in H_\vr}\varphi_\vr(\s)\tilde{\psi}_\vr(\t,\s)u_\t^\vr\otimes u_{\t^{-1}\s}^\vr  \\
&=&
\sum_{\t,\r\in H_\vr}\varphi_\vr(\t\r)\tilde{\psi}_\vr(\t,\t\r) u_\t^\vr \otimes u_\r^\vr \\
&=&
\sum_{\t,\r\in H_\vr}\varphi_\vr(\t\r)\psi_\vr(\t,\r) u_\t^\vr \otimes u_\r^\vr \\
&=&
\sum_{\t,\r\in H_\vr}\varphi_\vr(\t)\varphi_\vr(\r) u_\t^\vr \otimes u_\r^\vr \\
&=&
\chi_\vr \otimes \chi_\vr
\eeg
The fifth equality follows from Proposition \ref{kocykl}.
\end{proof}


Let us fix $\vr\in\bZ^n$. It can be considered as a character of the (classical) group $\bT^n$. Since $\bT^n$ is quantum subgroup of $(A_{H_\vr},\Delta_{H_\vr})$, the character $\chi_\vr$ defined in \eqref{chik} can be considered as an extension of $\vr$ to the whole $(A_{H_\vr},\Delta_{H_\vr})$. Now, let $v:H_\vr\to B(K)$ be some irreducible representation of the stabilizer subgroup $H_\vr$. Consider it as an element of $B(K)\otimes C(H_\vr)$, and let $\tilde{v}=(\id\otimes\iota_{H_\vr})(v)\in B(K)\otimes A_{H_\vr}$. It can be easily shown that $\tilde{v}$ is an irreducible representation of the quantum group $(A_{H_\vr},\Delta_{H_\vr})$.

Next, consider a representation $\eta\in B(K)\otimes A_{H_\vr} $ being a tensor product of representations $\chi_\vr$ and $\tilde{v}$. We will show that one can associate a representation $\tilde{\eta}_{\vr,v}$ to $\eta$ in a way similar to the construction of an induced representation.
To this end let us consider the subspace $\tilde{K}_{\vr,v}\subset K\otimes A_\te^n$ defined as follows
$$ \tilde{K}_{\vr,v}=\left\{ \sum_{\s\in S_n}\lambda(\s)\otimes u_\s^\vr:\,\lambda \in F_{H_\vr,v}\right\}
$$ 
where $F_{H_\vr,v}$ is a space of all functions $\lambda:S_n\to K$ such that $\lambda(\s\rho)=\vartheta_\vr(\s,\rho)v(\s)\lambda(\rho)$ for every $\s\in H_\vr$ and $\rho\in S_n$.
%
Now, for each right coset from $H_\vr\backslash S_n$ we fix some representative $\s_\nu$, so that 
$H_\vr \s_\nu\cap H_\vr \s_{\nu'}=\emptyset$ for $\nu\neq\nu'$ and $\bigcup_\nu H_\vr\s_\nu=S_n$.
We can define a basis in $F_{H_\vr,v}$ which consists of the functions
$ 
\lambda_{\mu,i}(\sigma_\nu)=\delta_{\mu,\nu}e_i,
$ 
where $\mu\in \left\{1,2,\ldots [G:H_\vr]\right\}$ and $e_i$ form a fixed basis in the space $K$. Since any $\sigma\in S_n$ belongs to a unique right coset $H_\vr\sigma_\nu$, one can write
$$ 
\lambda_{\mu,i}(\sigma)=\vartheta_\vr(\s\s_\nu^{-1},\s_\nu^{-1})v(\sigma\sigma_\nu^{-1})\delta_{\mu,\nu}e_i
$$ 
for all $\sigma\in S_n$. 
Let 
$$
f_{\mu,i}=\sum_{\s\in S_n}\lambda_{\mu,i}(\s)\otimes u_\s^\vr=\sum_{\s\in H_\vr\sigma_\mu}\lambda_{\mu,i}(\s)\otimes u_\s^\vr .
$$
The system of elements $f_{\mu,i}$ turns out to be a basis in $\tilde{K}_{\vr,v}$. 
If $K$ has the structure of a Hilbert space, then we can equipp $F_{H_\vr,v}$ with a scalar product
\begin{equation}\label{scalar}
\left\langle f,g\right\rangle_{F_{H_\vr,v}}=\frac{1}{|H_\vr|}\sum_{\pi\in S_n}\left\langle f(\pi),g(\pi)\right\rangle_K,
\end{equation}
One can check that $\left\{\lambda_{\mu,i}\right\}$ is orthonormal with respect to (\ref{scalar}).

Now, define a map $\tilde{\eta}_{\vr,v}:\tilde{K}_{\vr,v} \to  \tilde{K}_{\vr,v}\otimes A_\theta^n$ by 
$ 
\tilde{\eta}_{\vr,v}=\id_K\otimes\Delta.
$ 
\begin{lm}
\label{lema}
For any $\mu$ and $i$,
\begin{equation}
\label{etamat}
\tilde{\eta}_{\vr,v}(f_{\mu,i})=\sum_{\nu,j} f_{\nu,j}\otimes a_{\nu,j;\mu,i}
\end{equation}
where elements $a_{\nu,j;\mu,i}\in A_\te^n$ are given by
\begin{equation}
\label{amu}
a_{\nu,j;\mu,i}=\sum_{\xi\in H_\vr}\vartheta_\vr(\s_\nu,e) \overline{\vartheta_\vr(\xi^{-1}\t,\ti\xi\s_\mu)} \left\langle e_j,v(\xi)e_i\right\rangle u^{\sigma_\nu^{-1}\vr}_{\sigma_\nu^{-1}\xi\sigma_\mu}.
\end{equation}
\end{lm}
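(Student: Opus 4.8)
The plan is to compute $\tilde{\eta}_{\vr,v}(f_{\mu,i}) = (\id_K\otimes\Delta)(f_{\mu,i})$ directly from the definition $f_{\mu,i}=\sum_{\s\in H_\vr\s_\mu}\lambda_{\mu,i}(\s)\otimes u_\s^\vr$ and the comultiplication formula \eqref{deltau}, and then to re-expand the result in the basis $\{f_{\nu,j}\}$ of $\tilde K_{\vr,v}$, reading off the coefficients $a_{\nu,j;\mu,i}$. First I would apply $\id_K\otimes\Delta$ termwise, using \eqref{deltau} in the form $\Delta(u_\s^\vr)=\sum_{\t\in S_n}\tilde\psi_\vr(\t,\s)\,u_\t^\vr\otimes u_{\ti\s}^\vr$, so that
\[
\tilde{\eta}_{\vr,v}(f_{\mu,i})=\sum_{\s\in H_\vr\s_\mu}\sum_{\t\in S_n}\tilde\psi_\vr(\t,\s)\,\lambda_{\mu,i}(\s)\otimes u_\t^\vr\otimes u_{\ti\s}^\vr .
\]
The first two tensor legs here are not yet manifestly of the form $\sum f_{\nu,j}\otimes(\cdots)$, so the middle step is to group the sum over $\t$ according to the right coset $H_\vr\s_\nu$ containing $\t$: writing $\t=\xi\s_\nu$ with $\xi\in H_\vr$, I would use the defining property $\lambda_{\mu,i}(\xi\s_\nu)=\vartheta_\vr(\xi,\s_\nu)v(\xi)\lambda_{\mu,i}(\s_\nu)$ together with the expansion $\lambda_{\mu,i}(\s_\nu)=\delta_{\mu,\nu}e_i$ and $v(\xi)e_i=\sum_j\langle e_j,v(\xi)e_i\rangle e_j$ to rewrite the $K$-leg in terms of the basis vectors $e_j$, and simultaneously recognize $\sum_{\xi}\vartheta_\vr(\xi,\s_\nu)\,(\text{scalar})\otimes u_{\xi\s_\nu}^\vr$ as a multiple of $f_{\nu,j}$ after extracting the scalar cocycle factors. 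The remaining third leg $u_{\ti\s}^\vr$ with $\t=\xi\s_\nu$, $\s\in H_\vr\s_\mu$, has to be reorganized: parametrizing $\s=\zeta\s_\mu$ with $\zeta\in H_\vr$ and using $\ti\s=\s_\nu^{-1}\xi^{-1}\zeta\s_\mu$ one rewrites $u_{\ti\s}^\vr=u_{\s_\nu^{-1}\xi^{-1}\zeta\s_\mu}^\vr$; collecting the sum over $\zeta\in H_\vr$ should produce, up to cocycle scalars, the element $u^{\s_\nu^{-1}\vr}_{\s_\nu^{-1}\xi\s_\mu}$ appearing in \eqref{amu} (note $\s_\nu^{-1}\vr=\vr$ when $\s_\nu$ happens to fix $\vr$, but in general this is the correct index twist, and relation \eqref{porz} converts between the $u_{\t,\s}^\vr$ and $u_\t^\vr$ notations).

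The bookkeeping of the phase factors is the crux. I would assemble all the $\bT^1$-valued scalars produced along the way --- the $\tilde\psi_\vr(\t,\s)$ from $\Delta$, the $\vartheta_\vr(\xi,\s_\nu)$ from the $\lambda_{\mu,i}$-covariance on the first leg, a $\vartheta_\vr$-factor from the covariance of $\lambda_{\mu,i}$ on $\s=\zeta\s_\mu$, and the normalization $\vartheta_\vr(\s_\mu\s_\mu^{-1},\cdots)$ built into $\lambda_{\mu,i}(\s)=\vartheta_\vr(\s\s_\mu^{-1},\s_\mu^{-1})v(\s\s_\mu^{-1})\delta$ --- and repeatedly apply the cocycle identity of Lemma \ref{alfal}, equations \eqref{alfa_lem}, \eqref{psi-alfa}, \eqref{psii}, together with $\om_{ij}\om_{ji}=1$, to collapse the product to the single factor $\vartheta_\vr(\s_\nu,e)\,\overline{\vartheta_\vr(\xi^{-1}\t,\ti\xi\s_\mu)}$ stated in \eqref{amu}. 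Here in \eqref{amu} the symbol $\t$ refers to the summation variable over $H_\vr$ (renamed $\xi$ is the inner one; one of these is the coset-shift index), so part of the task is simply matching the paper's index names to the natural ones in the computation.

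I expect the main obstacle to be precisely this phase-factor reconciliation: there are several $\vartheta_\vr$ and $\tilde\psi_\vr$ contributions arising from three different sources (the comultiplication, and the covariance of $\lambda_{\mu,i}$ in each of its two slots), and showing they combine into the compact expression in \eqref{amu} requires careful, repeated use of the twisted-cocycle relation \eqref{alfa_lem} --- in particular the fact that $\vartheta_\vr(\s,\t)$ is a genuine cocycle only when the first argument lies in $H_\vr$, which is why one must be scrupulous about which permutations are known to stabilize $\vr$. Once the scalars are under control, the identification of the $u$-leg with $u^{\s_\nu^{-1}\vr}_{\s_\nu^{-1}\xi\s_\mu}$ via \eqref{porz} and the recognition of the $K\otimes A_\te^n$-part as $\sum_{\nu,j}f_{\nu,j}\otimes(\cdots)$ are routine, and the independence of $a_{\nu,j;\mu,i}$ from the choice made in splitting $S_n$ into cosets follows from the covariance built into $F_{H_\vr,v}$.
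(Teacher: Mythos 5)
Your overall strategy --- apply $\id_K\otimes\Delta$, use \eqref{deltau}, regroup the sum over $\t$ by right cosets of $H_\vr$, and collapse phases via Lemma \ref{alfal} --- is the same as the paper's, but the central step of your plan, the re-expansion in the basis $\{f_{\nu,j}\}$, contains a genuine gap and, as literally described, would fail. After applying \eqref{deltau}, each term has first leg $\lambda_{\mu,i}(\s)$ with $\s\in H_\vr\s_\mu$; this vector does not depend on the new summation variable $\t$ at all (the $\t$-dependence sits only in the scalar $\tilde{\psi}_\vr(\t,\s)$ and in the third leg $u^{\ti\vr}_{\ti\s}$ --- note also that the correct exponent there is $\ti\vr$, not $\vr$ as in the formula you quote, though you acknowledge the twist later). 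To recognize $f_{\nu,j}=\sum_{\t\in H_\vr\s_\nu}\lambda_{\nu,j}(\t)\otimes u_\t^\vr$ you must transfer $\t$-dependence into the first leg, so that the $K$-vector attached to $u_\t^\vr$ varies covariantly with $\t$ inside each coset and the resulting $A_\te^n$-coefficient is independent of the representative. Your proposed mechanism --- applying the covariance relation to $\lambda_{\mu,i}$ at the point $\t=\xi\s_\nu$ together with $\lambda_{\mu,i}(\s_\nu)=\delta_{\mu,\nu}e_i$ --- is applied at the wrong place: $\lambda_{\mu,i}$ is never evaluated at $\t$ in the expansion, and if one did substitute in this way the factor $\delta_{\mu,\nu}$ would kill every term with $\nu\neq\mu$, contradicting \eqref{amu}, whose coefficients $a_{\nu,j;\mu,i}$ are in general nonzero for $\nu\neq\mu$ (they involve the elements $u^{\s_\nu^{-1}\vr}_{\s_\nu^{-1}\xi\s_\mu}$).

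What is missing is exactly the device the paper introduces at this point: for fixed $\xi\in H_\vr$ and $\t\in H_\vr\s_\nu$ one defines the auxiliary function $\lambda_{\mu,i}^{\xi,\t}(\pi)=\vartheta_\vr(\pi\ti,\t)\,\overline{\vartheta_\vr(\pi\ti,\xi\s_\mu)}\,\lambda_{\mu,i}(\pi\ti\xi\s_\mu)$, verifies via \eqref{alfa_lem} that it lies in $F_{H_\vr,v}$, and expands it in the orthonormal basis $\{\lambda_{\beta,j}\}$ with respect to the scalar product \eqref{scalar}; evaluating at $\pi=\t$ then converts the $\t$-independent vector $\lambda_{\mu,i}(\xi\s_\mu)$ into $\sum_j\gamma_{\nu,j}\lambda_{\nu,j}(\t)$ with explicitly computed coefficients, after which the phase bookkeeping with Lemma \ref{alfal} yields \eqref{amu} and the independence of the choice of $\t\in H_\vr\s_\nu$. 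An alternative that would also close the gap is to take \eqref{amu} as an ansatz, expand both sides of \eqref{etamat} over the linearly independent monomials $u_\t^\vr\otimes u^{\s_\nu^{-1}\vr}_{\rho}$, and compare the $K$-valued coefficients using \eqref{alfa_lem}; but some such mechanism must be supplied --- ``recognize \ldots as a multiple of $f_{\nu,j}$ after extracting the scalar cocycle factors'' is precisely the nontrivial content of the lemma, not a routine step.
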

\begin{proof}
It follows from \eqref{deltau} that
\begin{eqnarray}
\tilde{\eta}_{\vr,v}(f_{\mu,i})&=&
\sum_{\s\in H_\vr\s_{\mu}}\sum_{\t\in S_n} \lambda_{\mu,i}(\s) \tilde{\psi}(\tau,\s) \otimes u_\tau^\vr \otimes u^{\ti\vr}_{\ti\s} \nonumber\\
&=&\sum_{\tau\in S_n} \sum_{\rho\in \ti H_\vr \s_{\mu}} \lambda_{\mu,i}(\tau\rho) \tilde{\psi}(\tau,\tau\rho) \otimes u_\tau^\vr \otimes u^{\ti\vr}_{\rho} \nonumber\\
&=&  \sum_{\nu} \sum_{\tau\in H_\vr\sigma_{\nu}} \sum_{\xi\in H_\vr}  \lambda_{\mu,i}(\xi\sigma_\mu) \tilde{\psi}_\vr(\t,\ti\xi\s_\mu) \otimes u_\tau^\vr\otimes u^{\sigma_\nu^{-1}\vr}_{\tau^{-1}\xi\sigma_\mu}\nonumber,
\end{eqnarray}
For a given coset $\mu$, and $\xi \in H_\vr$, and $\tau \in H_\vr\sigma_\nu$ we may define a function $\lambda_{\mu,i}^{\xi,\tau}:S_n\rightarrow K$ given by 
$$
\lambda_{\mu,i}^{\xi,\tau}(\pi)=\vartheta_\vr(\pi\ti,\t) \overline{\vartheta_\vr(\pi\ti,\xi\s_\mu)} \lambda_{\mu,i}(\pi\tau^{-1}\xi\sigma_\mu).
$$ 
We show that $\lambda_{\mu,i}^{\xi,\tau}\in F_{H_\vr,v}$.
If $\pi\not\in H_\vr\s_\nu$, then $\lambda_{\mu,i}^{\xi,\tau}(\s\pi)=0=\lambda_{\mu,i}^{\xi,\tau}(\pi)$ for all $\s\in H_\vr$. Let $\pi\in H_\vr\s_\nu$ and $\s\in H_\vr$. Observe that \eqref{alfa_lem} implies
\beg
\vartheta_\vr(\s\pi\ti,\t) \overline{\vartheta_\vr(\s\pi\ti,\xi\s_\mu)} \vartheta_\vr(\s,\pi\ti\xi\s_\mu)&=&
\vartheta_\vr(\s\pi\ti,\t)\overline{\vartheta_\vr(\pi\ti,\xi\s_\mu)} \\
&=&
\vartheta_\vr(\pi\ti,\t) \vartheta_\vr(\s,\pi) \overline{\vartheta_\vr(\pi\ti,\xi\s_\mu)}
\eeg
Therefore, both expressions
$$
\lambda_{\mu,i}^{\xi,\tau}(\s\pi)=
\vartheta_\vr(\s\pi\ti,\t) \overline{\vartheta_\vr(\s\pi\ti,\xi\s_\mu)} \vartheta_\vr(\s,\pi\ti\xi\s_\mu)v(\s) \lambda_{\mu,i}(\pi\ti\xi\s_\mu)
$$
and
$$
\vartheta_\vr(\s,\pi)v(\s)\lambda_{\mu,i}^{\xi,\tau}(\pi) =
\vartheta_\vr(\pi\ti,\t) \overline{\vartheta_\vr(\pi\ti,\xi\s_\mu)} \vartheta_\vr(\s,\pi) v(\s) \lambda_{\mu,i}(\pi\ti\xi\s_\mu)$$
are equal. Hence $\lambda_{\mu,i}^{\xi,\tau}\in F_{H_\vr,v}$. 
Consequently, $\lambda_{\mu,i}^{\xi,\tau}$ is a linear combination of functions $\lambda_{\beta,j}$, i.e. 
$ 
\lambda_{\mu,i}^{\xi,\tau}=\sum_{\beta,j}\gamma_{\beta,j}\lambda_{\beta,j},
$ 
where $\gamma_{\beta,j}$ are given by 
\begin{eqnarray}
\gamma_{\beta,j}&=&
\left\langle \lambda_{\beta,j},\lambda_{\mu,i}^{\xi,\tau}\right\rangle_{F_{H_\vr}} \\ 
&=&
\frac{1}{|H_\vr|} \sum_{\pi\in S_n} \left\langle \lambda_{\beta,j}(\pi),\lambda_{\mu,i}^{\xi,\tau}(\pi) \right\rangle \nonumber \\
&=& 
\frac{1}{|H_\vr|}\sum_{\pi\in S_n} \left\langle \lambda_{\beta,j}(\pi), \vartheta_\vr(\pi\ti,\t) \overline{\vartheta_\vr(\pi\ti,\xi\s_\mu)} \lambda_{\mu,i}^{\xi,\tau}(\pi\ti\xi\sigma_\mu) \right\rangle \nonumber\\
&=&  
\frac{\delta_{\beta\nu}}{|H_\vr|} \sum_{\pi\in H_\vr\sigma_{\nu}} \vartheta_\vr(\pi\ti,\t) \overline{\vartheta_\vr(\pi\ti,\xi\s_\mu)} \overline{\vartheta_\vr(\pi\s_\nu^{-1},\s_\nu)} \vartheta_\vr(\pi\ti\xi,\s_\mu) \times \nonumber \\
&& 
\qquad{} \times \left\langle v(\pi\sigma_\nu^{-1})e_j,v(\pi\tau^{-1}\xi)e_i \right\rangle \nonumber \\
&=&
\frac{\delta_{\beta\nu}}{|H_\vr|} \sum_{\pi\in H_\vr\sigma_{\nu}} \vartheta_\vr(\pi\ti,\t) \overline{\vartheta_\vr(\pi\s_\nu^{-1},\s_\nu)} \vartheta_\vr(\xi,\s_\mu) \langle e_j, v(\s_\nu^{-1}\ti\xi)e_i\rangle \nonumber \\
&=&  
\frac{\delta_{\beta\nu}}{|H_\vr|} \sum_{\zeta\in H_\vr} \vartheta_\vr(\zeta\s_\nu\ti,\t) \overline{\vartheta_\vr(\zeta,\s_\nu)} \vartheta_\vr(\xi,\s_\mu) \left\langle e_j,v(\sigma_\nu\tau^{-1}\xi)e_i\right\rangle \nonumber \\
&=&
\delta_{\beta\nu}\vartheta_\vr(\s_\nu\ti,\t)\vartheta_\vr(\xi,\s_\mu) \left\langle e_j,v(\sigma_\nu\tau^{-1}\xi)e_i\right\rangle . \nonumber
\end{eqnarray}
In the above equalities we used relation \eqref{alfa_lem} several times.

Since 
$\lambda_{\mu,i}(\xi\sigma_\mu)=\lambda_{\mu,i}^{\xi,\tau}(\tau)= \sum_{\beta,j}\gamma_{\beta,j}
\lambda_{\beta,j}(\tau)$ and \eqref{psi-alfa} is satisfied,
we have
\beg
\lefteqn{\tilde{\eta}_{\vr,v}(f_{\mu,i})=} \\
&=& 
\sum_{\nu} \sum_\beta  \sum_j \sum_{\tau\in H_\vr\sigma_{\nu}} \sum_{\xi\in H_\vr} \delta_{\beta\nu} \vartheta_\vr(\s_\nu\ti,\t) \vartheta_\vr(\xi,\s_\mu) \left\langle e_j,v(\sigma_\nu\tau^{-1}\xi)e_i\right\rangle \lambda_{\nu,j}(\tau) \times \\
&& \qquad {}\times \overline{\vartheta_\vr(\t,\ti\xi\s_\mu)} \vartheta_\vr(\t,e) \otimes u_\tau^\vr\otimes u^{\sigma_\nu^{-1}\vr}_{\tau^{-1}\xi\sigma_\mu}\nonumber
\eeg
Notice that according to Lemma \ref{alfal}, 
$$
\vartheta_\vr(\s_\nu\ti,\t)=\vartheta_\vr(\s_\nu,e)\overline{\vartheta_\vr(\t,e)}
$$
and
$$
\vartheta_\vr(\xi,\s_\mu)= \vartheta_\vr(\t,\ti\xi\s_\mu) \overline{\vartheta_\vr(\xi^{-1}\t,\ti\xi\s_\mu)},
$$
as $\s_\nu\ti,\xi\in H_\vr$. Consequently,
$$
\vartheta_\vr(\s_\nu\ti,\t) \vartheta_\vr(\xi,\s_\mu) \overline{\vartheta_\vr(\t,\ti\xi\s_\mu)} \vartheta_\vr(\t,e) = \vartheta_\vr(\s_\nu,e) \overline{\vartheta_\vr(\xi^{-1}\t,\ti\xi\s_\mu)}.
$$
Finally, we get
\beg
\lefteqn{\tilde{\eta}_{\vr,v}(f_{\mu,i})=}\\
&=& \sum_{\nu,j}\left( \sum_{\t\in H_\vr\s_\nu}\lambda_{\nu,j}(\t) \otimes u_\t^\vr \otimes \sum_{\xi\in H_\vr} \vartheta_\vr(\s_\nu,e) \overline{\vartheta_\vr(\xi^{-1}\t,\ti\xi\s_\mu)} \langle e_j,v(\s_nu\ti\xi)e_i\rangle u_{\ti\xi\s_\mu}^{\s_\nu^{-1}\vr}\right).
\eeg
Observe that the expression
$$
\sum_{\xi\in H_\vr} \vartheta_\vr(\s_\nu,e) \overline{\vartheta_\vr(\xi^{-1}\t,\ti\xi\s_\mu)} \langle e_j,v(\s_nu\ti\xi)e_i\rangle u_{\ti\xi\s_\mu}^{\s_\nu^{-1}\vr}
$$
is independent on the choice of $\tau\in H_\vr\sigma_\nu$ and it is equal to $a_{\nu,j;\mu,i}$ given by \eqref{amu}. Thus, \eqref{etamat} is proved.
\end{proof}

\begin{pro}
For every $\vr\in\bZ$ and an irreducible representation $v$ of $H_\vr$, the map $\tilde{\eta}_{\vr,v}$ is an irreducible representation of $\bG_\te$. 
\end{pro}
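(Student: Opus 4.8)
The plan is to verify the two defining properties separately: first that $\tilde\eta_{\vr,v}$ is a representation of $\bG_\te$, and then that it is irreducible. For the representation property, one must check that $\tilde\eta_{\vr,v}$ genuinely maps $\tilde K_{\vr,v}$ into $\tilde K_{\vr,v}\otimes A_\te^n$ (i.e. that the target space is invariant) and that it satisfies the coassociativity-type condition $(\tilde\eta_{\vr,v}\otimes\id)\tilde\eta_{\vr,v}=(\id\otimes\Delta)\tilde\eta_{\vr,v}$. The invariance is exactly the content of Lemma \ref{lema}, since \eqref{etamat} exhibits $\tilde\eta_{\vr,v}(f_{\mu,i})$ as an element of $\tilde K_{\vr,v}\otimes A_\te^n$ expanded in the basis $\{f_{\nu,j}\}$ with coefficients $a_{\nu,j;\mu,i}\in A_\te^n$. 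The coassociativity condition is immediate from the fact that $\tilde\eta_{\vr,v}$ is just the restriction of $\id_K\otimes\Delta$ together with coassociativity of $\Delta$ (Proposition \ref{cqgte}): applying $\id_K\otimes\Delta\otimes\id$ versus $\id_K\otimes\id\otimes\Delta$ and using $(\Delta\otimes\id)\Delta=(\id\otimes\Delta)\Delta$ gives the claim on generators, hence everywhere by linearity.

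Next I would identify the matrix $a=(a_{\nu,j;\mu,i})$ explicitly enough to compare it with the classical picture. The formula \eqref{amu} shows that $a_{\nu,j;\mu,i}$ is supported, via the monomials $u^{\sigma_\nu^{-1}\vr}_{\sigma_\nu^{-1}\xi\sigma_\mu}$, on those $\xi\in H_\vr$; summing over $\xi$ produces precisely the induced-representation matrix coefficients twisted by the $2$-cocycle data $\vartheta_\vr$. The key observation is that under the quotient morphism $\pi_{S_n}:A_\te^n\to C(\bT^n)$ composed with evaluation (equivalently, after identifying $u_\s^\vr$ appropriately), the matrix $a$ pushes forward to the matrix of the classical induced representation $\tilde\eta_{\vr_\kappa,v}$ of $\bT^n\rtimes S_n$ described in Remark \ref{remMac}. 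Thus $\tilde\eta_{\vr,v}$ is a ``lift'' of the classical Mackey representation, and one should be able to transport the classical argument.

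For irreducibility, the cleanest route is to compute $\Mor(\tilde\eta_{\vr,v},\tilde\eta_{\vr,v})$ and show it is one-dimensional. An intertwiner $T\in B(\tilde K_{\vr,v})$ must commute with all $a_{\nu,j;\mu,i}$; writing $T$ in the basis $\{f_{\mu,i}\}$ one gets a system of equations indexed by cosets $H_\vr\backslash S_n$ and by the action of the $u_\s^\vr$. Because the $u_\s^\vr$ for distinct cosets live in orthogonal summands (the central projections $p_\s$ of Proposition \ref{cent} separate them, via \eqref{porz} and the block decomposition $A_\te^n=\bigoplus_\s C(\bT^n_{\te^{(\s)}})$), the intertwiner equations decouple: first one shows $T$ must be ``diagonal over cosets'', and then on the single block corresponding to the trivial coset the equations reduce — using the cocycle identity \eqref{psirep} and the definition of $F_{H_\vr,v}$ — to the requirement that the corresponding operator on $K$ intertwines the irreducible representation $v$ of $H_\vr$, hence is scalar. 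Assembling, $T$ is scalar on $\tilde K_{\vr,v}$.

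The main obstacle I anticipate is the bookkeeping in the irreducibility step: one has to be careful that the twists by $\vartheta_\vr$ (which are genuinely nontrivial for $\theta\neq 0$, unlike the classical case) cancel correctly when passing between cosets, so that the decoupling really does reduce to the classical stabilizer computation. A secondary subtlety is confirming that $\tilde K_{\vr,v}$ is nonzero and finite-dimensional with the stated basis $\{f_{\mu,i}\}$ (dimension $[S_n:H_\vr]\cdot\dim K$), and that unitarity of the representation follows from the scalar product \eqref{scalar} being preserved — the latter should follow from $\bG_\te$ being of Kac type (the Haar state $h$ is a trace, as noted after \eqref{haar}) together with orthonormality of $\{\lambda_{\mu,i}\}$. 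Everything else is a routine, if lengthy, manipulation of the cocycle relations already established in Lemma \ref{alfal} and Proposition \ref{kocykl}.
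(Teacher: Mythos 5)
Your treatment of the representation property coincides with the paper's (invariance of $\tilde{K}_{\vr,v}$ via Lemma \ref{lema}, coassociativity because $\tilde{\eta}_{\vr,v}$ is the restriction of $\id_K\otimes\Delta$), but the irreducibility step has a genuine gap. You assert that the intertwiner equations ``decouple'' over cosets, analyze only the diagonal block at one coset, conclude by Schur that it is scalar, and then ``assemble''. That argument can at best show that every $S\in\Mor(\tilde{\eta}_{\vr,v},\tilde{\eta}_{\vr,v})$ is block-diagonal over $H_\vr\backslash S_n$ with each diagonal block $S^\mu_\mu$ lying in $\Mor(v,v)$, i.e. a scalar $\lambda_\mu\jed_K$ \emph{possibly depending on} $\mu$; this bounds $\dim\Mor(\tilde{\eta}_{\vr,v},\tilde{\eta}_{\vr,v})$ by $[S_n:H_\vr]$, not by $1$. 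Indeed, if the equations really decoupled coset by coset, the commutant would contain all such block-scalar operators and $\tilde{\eta}_{\vr,v}$ would be \emph{reducible}. What is missing is precisely the cross-coset constraint: comparing, in the intertwining identity, the coefficients of the linearly independent monomials $u^{\sigma_\pi^{-1}\vr}_{\sigma_\pi^{-1}\xi\sigma_\mu}$ appearing in \eqref{amu} for an \emph{arbitrary pair} of cosets $\mu,\pi$ yields (as in \eqref{fin2}) $\langle e_k, v(\xi)S^\mu_\mu e_i\rangle=\langle e_k, S^\pi_\pi v(\xi) e_i\rangle$ for all $\xi\in H_\vr$; taking $\xi=e$ forces $S^\mu_\mu=S^\pi_\pi$, and only together with Schur's lemma does this give $S=\lambda\jed_{\tilde{K}_{\vr,v}}$. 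So the nonvanishing off-diagonal coefficients $a_{\pi,k;\mu,i}$, $\pi\neq\mu$, do couple the blocks, and this coupling is exactly what makes the induced representation irreducible; your sketch must include it.

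A secondary problem is the intermediate claim that the matrix $(a_{\nu,j;\mu,i})$ ``pushes forward'' to the classical induced representation of $\bT^n\rtimes S_n$, so that the classical Mackey argument can be transported. For $\te\neq 0$ there is no Hopf $*$-homomorphism $A_\te^n\to C(\bT^n\rtimes S_n)$ sending $u_{ik}$ to the classical matrix coefficients: relation \eqref{e1.1} fails in the commutative algebra unless $\om_{ji}\om_{kl}=1$ (the functions $u_{ik}u_{jl}$ and $u_{il}u_{jk}$ have disjoint supports there). The quotient morphism that does exist, $\pi_{S_n}:A_{S_n}\to C(\bT^n)$, lands in $C(\bT^n)$, and the image of $\tilde{\eta}_{\vr,v}$ there is a representation of the torus, hence never irreducible once $\dim\tilde{K}_{\vr,v}>1$; since irreducibility only passes \emph{from} a quotient image \emph{to} the original representation, nothing can be concluded this way. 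Thus the direct commutant computation (which is the paper's route and which you then sketch) has to carry the full weight, with the cross-coset step restored; the remaining remarks on the basis $\{f_{\mu,i}\}$ and on unitarity are harmless but not needed for the statement.
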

\begin{proof}
It follows from Lemma \ref{lema} that $\tilde{\eta}_{\vr,v}(\tilde{K}_{\vr,v})\subset \tilde{K}_{\vr,v}\otimes A_\theta^n$. Observe also that for any $f\in \tilde{K}_{\vr,v}$ we have
\beg
(\tilde{\eta}_{\vr,v}\otimes \id_{A_{\theta}^n})\tilde{\eta}_{\vr,v}(f) 
&=& 
(\id_K\otimes\Delta\otimes \id_{A_{\theta}^n})(\id_K\otimes\Delta)(f) \\
&=&
(\id_K\otimes \id_{A_{\theta}^n}\otimes \Delta)(\id_K\otimes\Delta)(f) \\
&=&
(\id_{\tilde{K}}\otimes \Delta)\tilde{\eta}_{\vr,v}(f),
\nonumber
\eeg
so $\tilde{\eta}_{\vr,v}$ is a representation.

We show that $\tilde{\eta}_{\vr,v}$ is irreducible.
Let $S:\tilde{K}_{\vr,v}\rightarrow \tilde{K}_{\vr,v}$ be a linear map which intervines $\tilde{\eta}_{\vr,v}$ with itself. Let $s^{\mu,i}_{\nu,j}$ be matrix coefficients of $S$ in the basis $f_{\nu,j}$, so that 
$$ 
S(f_{\mu,i})=\sum_{\nu,j}s^{\mu,i}_{\nu,j}f_{\nu,j}.
$$ 
Then \eqref{etamat} yields
$$ 
\tilde{\eta}_{\vr,v}(S(f_{\mu,i}))=\sum_{\nu,\pi=1}^{[S_n:H_\vr]}\sum_{j,k=1}^{\dim K}
s^{\mu,i}_{\nu,j}f_{\pi,k}\otimes a_{\pi,k;\nu,j}.
$$ 
and 
$$ 
(S\otimes \id_{A^n_{\theta}})\tilde{\eta}_{\vr,v}(f_{\mu,i})=\sum_{\beta,\pi}^{[S_n:H_\vr]}\sum_{k,l}^{\dim K}s^{\beta,l}_{\pi,k}f_{\pi,k}\otimes a_{\beta,l;\mu,i}.
$$ 
Since $S\in \Mor(\tilde{\eta},\tilde{\eta})$, the two above expressions are equal. 
Thus, 
for arbitrary $\pi$ and $k$
$$ 
\sum_{\nu,j}s^{\mu,i}_{\nu,j}a_{\pi,k;\nu,j}=\sum_{\beta,l}s^{\beta,l}_{\pi,k}a_{\beta,l;\mu,i}.
$$ 
Using the exact form \eqref{amu} of coefficients $a_{\pi,l;\mu,i}$ and taking into account that $u^{\sigma_\pi^{-1}k}_{\sigma_\pi^{-1}\xi\sigma_\mu}$ are lineary independent for different $\pi,\mu$ and $\xi$, we infer that for any $\xi\in H_k$,
$$ 
\sum_{j}s^{\mu,i}_{\nu,j}\left\langle e_k,v(\xi)e_j \right\rangle=0,
$$ 
provided that $\nu\neq\mu$, and
$$ 
\sum_{j}s^{\mu,i}_{\mu,j}\left\langle e_k,v(\xi)e_j \right\rangle=\sum_{l}s^{\pi,l}_{\pi,k}\left\langle e_l,v(\xi)e_i \right\rangle.
$$ 
These equalities are equivalent to 
\begin{equation}
\label{fin1}
\left\langle e_k,v(\xi)S_\nu^\mu e_i \right\rangle=0
\end{equation}
for $\nu\neq\mu$, and
\begin{equation}
\label{fin2}
\left\langle e_k,v(\xi)S_\mu^\mu e_i \right\rangle=\left\langle e_k,S_\pi^\pi v(\xi)e_i \right\rangle.
\end{equation}
where $S_\nu^\mu:K\rightarrow K$ is a linear operator with matrix entries $(s^{\mu,i}_{\nu,j})_{i,j}$ with respect to the standard basis of $K$. 
Equation (\ref{fin1}) means that $S_\nu^\mu=0$ for $\nu\neq\mu$, while 
(\ref{fin2}) shows that  $S_\mu^\mu\in \Mor(v,v)$ and $S_\mu^\mu=S_\pi^\pi$ for any $\mu,\pi$. It follows from irreducibility of $v$ that $s^{\mu,i}_{\nu,j}=\lambda\delta_{\mu,\nu}\delta_{i,j}$. Hence $S=\jed_{\tilde{K}_{\vr,v}}$, and irreducibility of $\tilde{\eta}_{\vr,v}$ follows.
\end{proof}

Now, we are ready to formulate the followin theorem.
\begin{thm}
Let $\vr_\kappa$, $\kappa\in\mathcal{K}$, be a system of representatives of orbits of the action of $S_n$ on $\bZ^n$.
\begin{enumerate}
\item
For any $\kappa,\kappa'\in\cK$ and representations $v$, $v'$, if $\tilde{\eta}_{\vr_\kappa,v}$,  $\tilde{\eta}_{\vr_{\kappa'},v'}$ are equivalent, then $\kappa=\kappa'$  and $v,v'$ are equivalent representations.
\item
Each irreducible representation of $\bG_\te$ is equivalent to $\tilde{\eta}_{\vr_\kappa,v}$ for some $\kappa$ and $v$.
\end{enumerate}
\end{thm}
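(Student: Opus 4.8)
The plan is to play the Peter--Weyl theory of the compact quantum group $\bG_\te$ (whose Haar state $h$ is faithful, since $\bG_\te$ is of Kac type) against the explicit matrix coefficients produced in Lemma~\ref{lema}. For a representation $v$ I write $C^v\subset\cA_\te^n$ for the linear span of its matrix coefficients; I will use that $C^v$ depends only on the equivalence class of $v$, that $\dim C^v=(\dim v)^2$ for irreducible $v$ (Woronowicz \cite{Wor94}), and that $C^v$, $C^w$ are orthogonal in the GNS inner product of $h$ when $v,w$ are inequivalent irreducibles. I will also use $\cA_\te^n=\bigoplus_{\s\in S_n}\mathrm{Poly}(C(\bT_{\te^{(\s)}}^n))=\mathrm{span}\{u_\s^\vr:\s\in S_n,\ \vr\in\bZ^n\}$ with the monomials $u_\s^\vr$ linearly independent (each $u_\s^\vr$ lies in the $\s$-summand and equals $x_\s^\vr$ there). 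For an orbit $O_\kappa$ of the $S_n$-action on $\bZ^n$ set
$$W_\kappa=\mathrm{span}\{u_\s^\vr:\s\in S_n,\ \vr\in O_\kappa\},$$
so that $\cA_\te^n=\bigoplus_\kappa W_\kappa$ and $\dim W_\kappa=n!\cdot|O_\kappa|$. The observation on which everything rests, read off from formula~\eqref{amu}, is that every matrix coefficient $a_{\nu,j;\mu,i}$ of $\tilde{\eta}_{\vr_\kappa,v}$ is a combination of monomials with exponent vector $\s_\nu^{-1}\vr_\kappa\in O_\kappa$; hence $C^{\tilde{\eta}_{\vr_\kappa,v}}\subset W_\kappa$ for all $\kappa$, $v$.

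For part (1) I would argue as follows. If $\tilde{\eta}_{\vr_\kappa,v}\cong\tilde{\eta}_{\vr_{\kappa'},v'}$, then $C^{\tilde{\eta}_{\vr_\kappa,v}}=C^{\tilde{\eta}_{\vr_{\kappa'},v'}}$ is a nonzero subspace of $W_\kappa\cap W_{\kappa'}$; since distinct orbits are disjoint, $W_\kappa\cap W_{\kappa'}=\{0\}$ unless $\kappa=\kappa'$, forcing $\kappa=\kappa'$. Fixing such a $\kappa$, I would compare the characters $\chi^v=\sum_{\mu,i}a_{\mu,i;\mu,i}$: choosing $\s_\mu$ itself as representative of the $\mu$-th right coset of $H_{\vr_\kappa}$ and using \eqref{amu}, this takes the form
$$\chi^v=\sum_{\mu}\sum_{\xi\in H_{\vr_\kappa}}c_{\mu,\xi}\,\big(\mathrm{tr}\,v(\xi)\big)\,u^{\s_\mu^{-1}\vr_\kappa}_{\s_\mu^{-1}\xi\s_\mu},$$
with $c_{\mu,\xi}=\vartheta_{\vr_\kappa}(\s_\mu,e)\,\overline{\vartheta_{\vr_\kappa}(\xi^{-1}\s_\mu,\s_\mu^{-1}\xi\s_\mu)}\in\bT$ independent of $v$. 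As $(\mu,\xi)$ varies the monomials $u^{\s_\mu^{-1}\vr_\kappa}_{\s_\mu^{-1}\xi\s_\mu}$ are pairwise distinct and linearly independent — distinct $\mu$ give distinct exponent vectors in $O_\kappa$ (the $\s_\mu$ represent distinct cosets of the stabilizer), and distinct $\xi$ give distinct permutation indices $\s_\mu^{-1}\xi\s_\mu$ — so equality of characters forces $\mathrm{tr}\,v(\xi)=\mathrm{tr}\,v'(\xi)$ for all $\xi\in H_{\vr_\kappa}$, whence $v\cong v'$ by finiteness of $H_{\vr_\kappa}$.

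For part (2) I would use a dimension count. Each $\tilde{\eta}_{\vr_\kappa,v}$ is irreducible (the Proposition immediately preceding the statement) and has $\dim\tilde{\eta}_{\vr_\kappa,v}=[S_n:H_{\vr_\kappa}]\cdot\dim v$ by the basis $\{f_{\mu,i}\}$ of $\tilde{K}_{\vr_\kappa,v}$. Part (1) gives that for fixed $\kappa$ the $\tilde{\eta}_{\vr_\kappa,v}$, $v\in\widehat{H_{\vr_\kappa}}$, are pairwise inequivalent, so the $C^{\tilde{\eta}_{\vr_\kappa,v}}$ are orthogonal and
$$\sum_{v\in\widehat{H_{\vr_\kappa}}}\dim C^{\tilde{\eta}_{\vr_\kappa,v}}=[S_n:H_{\vr_\kappa}]^2\sum_v(\dim v)^2=[S_n:H_{\vr_\kappa}]^2\,|H_{\vr_\kappa}|=[S_n:H_{\vr_\kappa}]\cdot n!=|O_\kappa|\cdot n!=\dim W_\kappa.$$
Since $\bigoplus_v C^{\tilde{\eta}_{\vr_\kappa,v}}$ is a subspace of $W_\kappa$ of the same finite dimension, they coincide; summing over $\kappa$ gives $\bigoplus_{\kappa,v}C^{\tilde{\eta}_{\vr_\kappa,v}}=\bigoplus_\kappa W_\kappa=\cA_\te^n$. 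If some irreducible $\pi$ were inequivalent to all $\tilde{\eta}_{\vr_\kappa,v}$, then $C^\pi$ would be orthogonal (for $h$) to every $C^{\tilde{\eta}_{\vr_\kappa,v}}$, hence to $\cA_\te^n\supset C^\pi$, so $C^\pi=\{0\}$ by faithfulness of $h$ — impossible. Thus every irreducible representation of $\bG_\te$ is equivalent to some $\tilde{\eta}_{\vr_\kappa,v}$.

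The step I expect to be the main obstacle is the character computation in part (1): one must verify, using the cocycle identity for $\vartheta_{\vr_\kappa}$ from Lemma~\ref{alfal} together with the $\tau$-independence established in the proof of Lemma~\ref{lema}, that after the choice $\tau=\s_\mu$ the factors in \eqref{amu} multiplying $\mathrm{tr}\,v(\xi)$ really collapse to a single unimodular scalar $c_{\mu,\xi}$ with no dependence on $v$, and that distinct $(\mu,\xi)$ yield genuinely distinct monomials. Once this is in place, part (2) is a soft counting argument. Combined with Mackey's classification of $\widehat{\bT^n\rtimes S_n}$ recalled in Remark~\ref{remMac}, statements (1)--(2) exhibit $(\kappa,[v])\mapsto[\tilde{\eta}_{\vr_\kappa,v}]$ as a bijection between the two dual objects, which is Theorem~\ref{reps}.
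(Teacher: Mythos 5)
Your argument is correct, and it genuinely differs from the paper's proof in its two substantive steps. For the second half of part (1) the paper does not use characters at all: it assumes an invertible intertwiner $S$, expands both sides of the intertwining relation in the basis $f_{\mu,i}$, and uses the explicit form \eqref{amu} together with linear independence of the monomials $u^{\s_\nu^{-1}\vr}_{\s_\nu^{-1}\xi\s_\mu}$ to conclude that $S^\mu_\nu=0$ for $\nu\neq\mu$ and $S^\mu_\mu\in\Mor(v,v')$, which cannot be invertible when $v\not\cong v'$. Your character argument reaches the same conclusion more quickly, and the verifications you flag do go through: the expression \eqref{amu} is independent of the choice of $\t\in H_\vr\s_\nu$ (the paper itself specializes to $\t=\s_\nu$ in its proof of part (2)), the phases $\vartheta_\vr$ are unimodular so they cancel, and distinct pairs $(\mu,\xi)$ give distinct monomials because the $\s_\mu$ represent distinct right cosets of the stabilizer; the only extra inputs are that equivalent finite-dimensional representations of a compact quantum group have equal characters and classical character theory for the finite group $H_{\vr_\kappa}$. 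For part (2) the paper is constructive: it decomposes the left regular representation of $H_\vr$ into irreducibles and uses its character to exhibit every monomial $u_\s^\vr$ explicitly as a linear combination of coefficients of the $\tilde{\eta}_{\vr_\kappa,v_\iota}$, and only then invokes the general fact (via \cite{BCT05,Wor94}) that coefficients spanning the unique dense Hopf $^*$-subalgebra exhaust the irreducibles. Your dimension count ($\sum_v(\dim v)^2=|H_{\vr_\kappa}|$ plus orbit--stabilizer, together with Peter--Weyl orthogonality, $\dim C^\pi=(\dim\pi)^2$, faithfulness of $h$, and the identification of $\cA_\te^n$ with the canonical dense Hopf $^*$-algebra, which is the same uniqueness result from \cite{BCT05}) is softer and shorter, at the price of leaning more heavily on the general corepresentation theory; the paper's route has the advantage of producing the expansion of each $u_\s^\vr$ explicitly and of reusing the intertwiner computation already needed for irreducibility. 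Both are complete proofs of the statement.
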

\begin{proof}
(1)
Assume fistly that $\kappa\neq\kappa'$. Since $\vr_\kappa$ and $\vr_{\kappa'}$ are not in the same orbit, elements $u_\sigma^{\tau{\vr_\kappa}}$ and $u_\pi^{\rho\vr_{\kappa'}}$ are linearly independent for any $\sigma, \tau,\pi, \rho\in S_n$. Hence, there is no map $S:\tilde{K}_{\vr_\kappa,v}\rightarrow \tilde{K}_{\vr_{\kappa'},v'}$ such that $\tilde{\eta}_{\vr_{\kappa'},v'}S=(S\otimes \id_{A^n_\theta})\tilde{\eta}_{\vr_\kappa,v}$.
Now cosider the case $\kappa=\kappa'$ and $\tilde{v}$ and $\tilde{v'}$ are inequivalent. Assume that there exists an invertible $S\in \Mor(\tilde{\eta}_{\vr_\kappa,v},\tilde{\eta}_{\vr_{\kappa'},v'})$. Then clearly $\tilde{K}_{\vr_\kappa,v}$ and $\tilde{K}_{\vr_{\kappa'},v}$ are isomorphic. 
Let us identify $K$ and $K'$ and fix some orthonormal basis $\left\{e_i\right\}$. 
Let 
$$ 
S(f_{\mu,i})=\sum_{\nu,j}s^{\mu,i}_{\nu,j}f'_{\nu,j}.
$$ 
Having in mind the formula \eqref{amu} for matrix coefficients of $\tilde{\eta}_{\vr,v}$ and $\tilde{\eta}_{\vr',v'}$, 
we conculde that for any $\xi\in H_{\vr}$
$$ 
\left\langle e_k,v(\xi)S_\nu^\mu e_i \right\rangle=0
$$ 
if $\nu\neq\mu$, and
$$ 
\left\langle e_k,v'(\xi)S_\mu^\mu e_i \right\rangle=\left\langle e_k,S_\pi^\pi v(\xi)e_i \right\rangle,
$$ 
where $S_\nu^\mu:K\rightarrow K$ is a linear operator with matrix coefficients $(s^{\mu,i}_{\nu,j})_{i,j}$ with respect to the fixed basis. Observe that for any $\mu$, $S_\mu^\mu\in \Mor(v,v')$ and therefore all $S_\mu^\mu$ are not invertible because $v$ and $v'$ are not equivalent. Since $S_\nu^\mu=0$ for $\mu\neq \nu$,  $S:\tilde{K}\rightarrow \tilde{K'}$ can not be invertible.

(2)
For $\vr\in\bZ^n$, let $V$ be the left regular representation of $H_{\vr}$. It can be decomposed as a direct sum of irreducible finite dimensional representations
$$ 
V=\bigoplus_{\iota} v_\iota
$$ 
where $\iota$ runs over a finite set of indices. 
It is well known fact that the character of the regular representation $\chi_V:H_{\vr}\rightarrow \mathbb{C}$ given by $\chi_V(\sigma)=\mathrm{Tr}(V(\sigma))$ for all $\sigma \in H_{\vk}$ has the following properties
$$ 
\chi_V(\sigma)=\begin{cases} \left|H_\vr\right| &\mbox{if } \sigma=e \\
0 & \mbox{if } \sigma\neq e\end{cases}
$$ 
where $\left|H_\vr\right|$ denotes the rank of $H_{\vr}$ and
$ 
\chi_V=\sum_{\alpha}\chi_\iota
$, 
$\chi_\iota$ being a character of representation $v_\iota$.

Let $a_{\nu,j;\mu,i}^\iota$ be matrix coefficients of the representation $\tilde{\eta}_{\vr,v_\iota}$, i.e. according to \eqref{amu}
$$ 
 a_{\nu,j;\mu,i}^\iota=\sum_{\xi\in H_\vk} \vartheta_\vr(\s_\nu,e) \overline{\vartheta_\vr(\xi^{-1}\s_\nu,\s_\nu^{-1}\xi\s_\mu)} \left\langle e_j,v_\iota(\xi)e_i\right\rangle u^{\sigma_\nu^{-1}\vr}_{\sigma_\nu^{-1}\xi\sigma_\mu}.
$$ 
For any $\varrho\in H_\vr$ and $\mu,\nu\in H_\vr\backslash S_n$,
\beg 
\lefteqn{\sum_\iota \sum_{i,j}\left\langle e_i,v_\iota (\varrho^{-1})e_j\right\rangle a_{\nu,j;\mu,i}^\iota=}\\
&=&
\sum_\iota\sum_{i,j}\left\langle e_i,v_\iota(\varrho^{-1})e_j\right\rangle\sum_{\xi\in H_\vr}\vartheta_\vr(\s_\nu,e) \overline{\vartheta_\vr(\xi^{-1}\s_\nu,\s_\nu^{-1}\xi\s_\mu)} \left\langle e_j,v_\iota (\xi)e_i\right\rangle u^{\sigma_\nu^{-1}\vr}_{\sigma_\nu^{-1}\xi\sigma_\mu}\nonumber\\
&=&
\sum_{\xi\in H_\vr} \vartheta_\vr(\s_\nu,e) \overline{\vartheta_\vr(\xi^{-1}\s_\nu,\s_\nu^{-1}\xi\s_\mu)} \sum_\iota \left(\sum_{i,j}\left\langle e_i,v_\iota (\varrho^{-1})e_j\right\rangle\left\langle e_j,v_\iota (\xi)e_i\right\rangle \right) u^{\sigma_\nu^{-1}\vr}_{\sigma_\nu^{-1}\xi\sigma_\mu}\nonumber\\
&=& 
\sum_{\xi\in H_\vr}\vartheta_\vr(\s_\nu,e) \overline{\vartheta_\vr(\xi^{-1}\s_\nu,\s_\nu^{-1}\xi\s_\mu)} \sum_\iota \mathrm{Tr}(v_\iota(\varrho^{-1}\xi)) u^{\sigma_\nu^{-1}\vr}_{\sigma_\nu^{-1}\xi\sigma_\mu}\nonumber\\
&=& 
\sum_{\xi\in H_\vr}\vartheta_\vr(\s_\nu,e) \overline{\vartheta_\vr(\xi^{-1}\s_\nu,\s_\nu^{-1}\xi\s_\mu)} \mathrm{Tr}(V(\varrho^{-1}\xi))u^{\sigma_\nu^{-1}\vr}_{\sigma_\nu^{-1}\xi\sigma_\mu} \nonumber\\
&=& 
\sum_{\xi\in H_\vr}\vartheta_\vr(\s_\nu,e) \overline{\vartheta_\vr(\xi^{-1}\s_\nu,\s_\nu^{-1}\xi\s_\mu)} \left|H_\vr\right|\delta_{\xi,\varrho}u^{\sigma_\nu^{-1}\vr}_{\sigma_\nu^{-1}\xi\sigma_\mu}\\
&=&
\vartheta_\vr(\s_\nu,e) \overline{\vartheta_\vr(\xi^{-1}\s_\nu,\s_\nu^{-1}\xi\s_\mu)} |H_\vk|u^{\sigma_\nu^{-1}\vr}_{\sigma_\nu^{-1}\varrho\sigma_\mu}.
\eeg 
Hence
$$ 
u^{\sigma_\nu^{-1}\vr}_{\sigma_\nu^{-1}\varrho\sigma_\mu}= \left(\vartheta_\vr(\s_\nu,e) \overline{\vartheta_\vr(\xi^{-1}\s_\nu,\s_\nu^{-1}\xi\s_\mu)}\left|H_\vr\right|\right)^{-1} \sum_\iota\sum_{i,j}\left\langle e_i,v_\iota(\varrho^{-1})e_j\right\rangle a_{\nu,j;\mu,i}^\alpha.
$$ 
For any $\vr\in\bZ^n$ and $\sigma\in S_n$ there are unique $\kappa\in\cK$, $\mu,\nu\in H_\vr\backslash S_n$, and $\varrho\in H_\vr$ such that $\vr=\sigma_\nu^{-1}\vr_\kappa$ and $\sigma=\sigma_{\nu}^{-1}\varrho\sigma_\mu$.
Thus, each element $u^{\vr}_{\sigma}$ is a linear combination of matrix coefficients of representations $\tilde{\eta}_{\vr_\kappa,v}$. Consequently, these coefficients linearly span the unique dense Hopf $^*$-subalgebra $\mathcal{A}_\theta^n$ in $A_\theta^n$, and the proof is completed (see \cite{BCT05,Wor94} for details).
\end{proof}

\begin{proof}[Proof of Theorem \ref{reps}]
Immediate consequence of the above theorem and Remark \ref{remMac}.
\end{proof}

\section{Final remarks}

\begin{m}[Multiple noncommutative torus]
In \cite{DPS03} a multiple noncommutative torus $\mathcal{T}_n$ was considered. It turns out that $\mathcal{T}_n$ and $\cA_\te^n$ are isomorphic as *-algebras. Morever, coalgebra structures on $\mathcal{T}_n$ is isomorphic to $(\cA_\te^n,F\circ \Delta)$, where $F$ is the flip operator on $\cA_\te^n\otimes \cA_\te^n$ (cf. \eqref{deltau} and \cite[equation (5)]{DPS03}). 
\end{m}
\begin{m}[Quantum isometry groups]
In \cite{Bho,BhoG,Gos09} a quantum isometry group $\mathrm{QISO}(\mathds{T}_\theta^n)$ of noncommutative $n$-torus was considered. It is defined as the universal object in the category of compact quantum groups acting smoothly and isometrically on $\bT_\te^n$.
Isometric action is understood as an action commuting with a Laplacian $\mathcal{L}$  
defined by 
$$ 
\mathcal{L}(x^\vr)=-(r_1^2+r_2^2\ldots + r_n^2)x^\vr=-r^2x^\vr,
$$ 
where $x^{\vr}=x_1^{r_1}x_2^{r_2}\ldots x_n^{r_n}$, and $x_1,\ldots,x_n$ are generators of $C(\bT_\te^n)$.
It appears that $\bG_\te=(A_\theta^n,\Delta)$ is a quantum subgroup of $\mathrm{QISO}(\mathds{T}_\theta^n)$. 
To justify this statement it is enough to show that condition (b) of \cite[Definition 2.11]{Gos09} is satisfied, namely that $\alpha_\vartheta=(\id_{C(\mathds{T}_\theta^n)}\otimes \vartheta)\circ\alpha$ commutes with $\mathcal{L}$ on $\mathrm{Poly}(\bT_\te^n)$ for every state $\vartheta$ on $A_\te^n$.
We have
\begin{eqnarray}
\alpha_\vartheta\mathcal{L}(x^\vr)
&=&
(\id_{C(\mathds{T}_\theta^n)}\otimes \vartheta)\alpha\mathcal{L}(x^\vr)\nonumber
\\
&=&-r^2(\id_{C(\mathds{T}_\theta^n)}\otimes \vartheta)\alpha(x^\vr)
\nonumber\\
&=&-r^2\sum_{\tau}\vartheta\left(u^{r_1}_{\tau^{-1}(1),1}u^{r_2}_{\tau^{-1}(2),2}\ldots u^{r_n}_{\tau^{-1}(n),n}\right)x^{r_1}_{\tau^{-1}(1)}x^{r_2}_{\tau^{-1}(2)}\ldots x^{r_n}_{\tau^{-1}(n)} \nonumber\\
&=&-r^2\sum_{\tau}\vartheta(u^{\vr}_{\tau,e})\varphi_{\vr}(\tau)x^{\tau^{-1}\vr} ,\nonumber
\end{eqnarray}
and
\begin{eqnarray}
\mathcal{L}\alpha_\vartheta(x^\vr)
&=&
\mathcal{L}(\id_{C(\mathds{T}_\theta^n)}\otimes \vartheta)\alpha(x^\vr)\nonumber
\\
&=&\sum_{\tau}\vartheta\left(u^{r_1}_{\tau^{-1}(1),1}u^{r_2}_{\tau^{-1}(2),2}\ldots u^{r_n}_{\tau^{-1}(n),n}\right)\mathcal{L}\left(x^{r_1}_{\tau^{-1}(1)}x^{r_2}_{\tau^{-1}(2)}\ldots x^{r_n}_{\tau^{-1}(n)}\right)
\nonumber\\
&=&\sum_{\tau}\vartheta\left(u^{\vr}_{\tau,e}\right)\varphi_{\vr}(\tau)\mathcal{L}\left(x^{\tau^{-1}\vr}\right) \nonumber\\
&=&
{}-\sum_{\tau}(r_{\tau(1)}^2+r_{\tau(2)}^2\ldots r_{\tau(n)}^2)\vartheta\left(u^{\vr}_{\tau,e}\right)\varphi_{\vr}(\tau)x^{\tau^{-1}\vr} \nonumber\\
&=&{}-r^2\sum_{\tau}\vartheta\left(u^{\vr}_{\tau,e}\right)\varphi_{\vr}(\tau)x^{\tau^{-1}\vr} .\nonumber
\end{eqnarray}
Hence 
$ 
\alpha_\vartheta\mathcal{L}(x)=\mathcal{L}\alpha_\vartheta(x)
$ 
for all states $\vartheta$ on $A_\theta^n$ and all $x\in \mathrm{Poly}(\mathds{T}_\theta^n)$. This implies that $(\bG_\te,\alpha)$ is an object in the category of compact quantum groups acting on noncommutative $n$-torus in a smooth and isometric way \cite{Gos09}, because $\mathrm{Poly}(\mathds{T}_\theta^n)$ is a $^*$-algebra generated by eigenvectors of $\mathcal{L}$. Therefore, there is the unique quantum group morphism from $(\bG_\te,\alpha)$ to $(\mathrm{QISO}(\mathds{T}_\theta^n),\alpha_u)$ which is compatible with coactions $\alpha$ and $\alpha_u$. It is surjective on the level of underlying C*-algebras, since the coaction $\alpha$ is faithful. Thus $\bG_\te$ is a quantum subgroup of $\mathrm{QISO}(\bT_\te^n)$.
\end{m}
\begin{m}[Rieffel deformation]
Let us remind that the mentioned above quantum isometry group of the noncommutative torus can be viewed as an effect of the Rieffel deformation procedure applied to the classical isometry group of the classical torus (cf. \cite{Bho,BhoG}). It seems that the similar phenomena should appear in the context of quantum symmetry groups or, more generally, quantum groups preserving some distinguished set of subspaces. 
\end{m}

\section*{Acknowledgements}
M.B. was partially supported by the Fundation for Polish Science TEAM project "Technology for Information Transfer and Processing Based on Phenomena of a Strictly Quantum Nature" and by the University of Gda{\'n}sk grant BMN: Zad.538-5400-B294-16. We would like also to thank Andrzej Sitarz for stimulating discussions and Piotr So{\l}tan for pointing out the refference \cite{HM98}.

\end{document}